\numberwithin{equation}{section}
\newtheorem{thm}{Theorem}[section]
\newtheorem{lem}[thm]{Lemma}
\newtheorem{prop}[thm]{Proposition}
\newtheorem{cor}[thm]{Corollary}
\newtheorem{exam}[thm]{Example}
\newtheorem{remark}[thm]{Remark}
\renewcommand{\Re}{\mathrm{Re}}
\newcommand{\FN}{\mathbb{N}}
\newcommand{\FZ}{\mathbb{Z}}  
\newcommand{\FR}{\mathbb{R}}  
\newcommand{\FQ}{\mathbb{Q}}  
\newcommand{\fa}{\mathfrak{a}}
\renewcommand{\a}{\mathfrak{a}}
\newcommand{\fb}{\mathfrak{b}}
\newcommand{\fc}{\mathfrak{c}}
\newcommand{\ff}{\mathfrak{f}}
\newcommand{\fp}{\mathfrak{p}}
\newcommand{\Hom}{\operatorname{Hom}}
\newcommand{\td}{\operatorname{Todd}}
\newcommand{\Res}{\operatorname{Res}}
\title[Special values of zeta functions]{Special values of partial zeta functions of real quadratic fields
at nonpositive integers and Euler-Maclaurin formula}
\author{Byungheup Jun and Jungyun Lee}
\email{byungheup@gmail.com} \email{lee9311@kias.re.kr}
\address{School of Mathematics,  Korea  Institute for Advanced Study\\
Hoegiro 87, Dongdaemun-gu, Seoul 130-722, Korea}
\begin{document}

\begin{abstract}
We compute the special  values at nonpositive integers of 
the partial zeta function of an ideal of a real quadratic field applying an asymptotic 
version of Euler-Maclaurin formula to the lattice cone associated to the ideal considered.
The Euler-Maclaurin formula involved is 
obtained by applying  the Todd series of differential operators to an integral of a small perturbation of the
cone. 
The additive property of Todd series w.r.t. the cone decomposition enables us to express
the partial zeta values in terms of the continued fraction of the reduced element of the ideal. 
The expression obtained uses the positive continued fraction which yields a virtual decomposition of the cone. 

We apply the expression to some indexed families of real quadratic fields satisfying certain condition
on the shape of the continued fractions.
The families considered include those appeared in \cite{J-L1} and \cite{J-L2} as well as the Richaud-Degert types. We show that the partial zeta values at a given nonpositive integer $-k$  in the family
indexed by $n$ is a polynomial of $n$. 

Finally, we compute explicitly the polynomials producing the partial zeta values at $s=-k$  for small $k$ of  some chosen families and compare these with some previously known results.

\end{abstract}

\date{2012.9.18}
\maketitle
\tableofcontents

\section{Introduction}
Let $K$ be a number field of the extension degree $[K:\FQ]=r_1+2 r_2$, where $r_1$ and $r_2$
denote respectively the number of real and complex embeddings of $K$.
The Dedekind zeta function 
$$
\zeta_K(s) = \prod_{\fp: \text{prime ideal in $K$}} \frac{1}{1- N\fp^{-s}}
$$
is encoded with many interesting arithmetic properties of $K$. In particular, the residue at $s=1$ is 
associated to the class number $h_K$ of $K$ by the class number formula:
$$
\Res_{s=1} \zeta_K(s) = \frac{2^{r_1} (2\pi)^{r_2} R_K h_K}{\omega_K \sqrt{|D_K|}},
$$
where $R_K$ is the regulator, $\omega_K$ is the number of roots of $1$ in $K$ and 
$D_K$ is the discriminant.
This has been the starting point of most studies of class numbers. 

The simplest  is the case of imaginary quadratic fields where the regulator appears to be trivial. In \cite{Gauss},
Gauss listed 9 imaginary quadratic fields of class number 1 and conjectured that the list is complete. 
Later on this had been studied through 20th century and is now quite well understood
and solved by works of Heegner, Stark, Goldfeld and several others(eg. \cite{Baker}, \cite{G-Z}, \cite{Stark}, \cite{Goldfeld},
\cite{Goldfeld2}, \cite{Heegner}, \cite{Stark} and \cite{Stark2}). 

The case of real quadratic fields is more complicated  due to the presence of nontrivial regulator.
It is also conjectured by Gauss that there are infinitely many real quadratic fields of class number one. 
But since the regulator is far from being controlled in relation to the discriminant, and there has been no essential 
progress to the proof of the conjecture.

Instead of treating the whole real quadratic fields, people considered some families of
real quadratic fields where the regulators are controlled in relation to the discriminant.
The most well-known family of this kind is the Richaud-Degert type: 
A Richaud-Degert type is defined by
$$
d(n) = n^2 \pm r
$$
for $r|4n$ and $-n < r \le n$.
For $r$ fixed as above, the family $\{K_n\}$ of real quadratic fields is called R-D type.  
In this case, we have a bound of the regulator $R_{K_n}$:
$$
R_{K_n} < 3 \log \sqrt{D_{K_n}}
$$
As in imaginary quadratic case,  a well-known estimation of Siegel $L(1,\chi_D) \sim |D|^{-\epsilon}$
together with the class number formula
implies that there are only finitely many R-D type fields of class number one.
Assuming the generalized Riemann hypothesis, the class number one problems have been 
solved for many subfamilies in R-D type.

It is quite recent that Bir\'o  first obtained an Riemann hypothesis free answer to the class number one problem
for the families $K_n= \FQ(\sqrt{n^2+4})$ and  $K_n=\FQ(\sqrt{4n^2+1})$ in a series of papers(\cite{Biro1}, \cite{Biro2}). 
He investigated the behavior of the special values of the partial Hecke L-functions at $s=0$
in the family. The partial Hecke L-function of an ideal $\fa$ is defined for a ray class character $\chi$ as
$$
L(s,\fa,\chi):= \sum_{\fb\sim \fa} \frac{\chi(\fb)}{N\fb^s}.
$$
He discovered that the special values behave in a packet of linear forms
whose coefficients are easily computed for the family $(K_n, O_{K_n}, \chi_n := \chi\circ N_{K_n/\FQ})$ for a Dirichlet character $\chi$.  This property 
is named the \textit{linearity}.

Inspired by Bir\'o's pioneering work, in \cite{Lee1}, \cite{Lee2}, \cite{Lee3} and \cite{Lee4}
the linearity is observed for more general families of Richaud-Degert types and 
the class number one and two problems have been answered for these.

In \cite{J-L1}, we found  a sufficient condition to yield the linearity of the Hecke 
L-values at $s=0$. Namely, for families of  integral ideals $\{\fb_n$ in $K_n\}$ such that
$\fb_n^{-1} = [1,\omega(n)]:= \FZ 1 + \FZ \omega(n)$, where $\omega(n)$ has 
purely periodic positive continued fraction expansion of a fixed period $r$
$$
\omega(n) = [[a_0(n), a_1(n),\cdots, a_{r-1}(n)]]
$$
and $N(\fb_n)N(x\omega(n)+y) = b_0(n) x^2 + b_1(n) xy + b_2(n) y^2$
for $a_i(n)$  and $b_i(n)$ being integer coefficient linear forms in $n$.
In this setting we have,  for $n=qk+r$ with $0\le r < q$ and for a Dirichlet character $\chi$ of 
conductor $q$, the $L$-value at $s=0$ 
$$
L_{K_n}(0,\chi\circ N_{K_n}, \fb_n) = \frac{1}{12 q^2} \left(A_\chi (r) k + B_\chi (r)\right)
$$
with $A_\chi(r)$, $B_\chi(r) \in \FZ[\chi]$ where $\FZ[\chi]$ denotes the extension of $\FZ$ by the values of $\chi$.

In \cite{J-L2} we obtained a higher degree generalization  of the linearity for ray class partial zeta values.
Let $\omega(n)$ be the Gauss' reduced element of $\fb_n$.
If we allow the coefficient $a_i(n)$ of the continued fraction of $\omega(n)$ to be polynomial of degree $d$, then the partial zeta value at $s=0$ of a mod-$q$ ray class ideal $(C+D\omega(n))\fb_n$  in the class of $\fb_n$ 
is a quasi-polynomial in $n$:
$$
\zeta_q(0,(C+D\omega(n))\fb_n) = \frac{1}{12 q^2} \left(A_0(r) + A_1(r) k + \cdots + A_d(r) k^d\right)
$$
with  $A_i(r)\in \FZ$(for precise definition, we refer the reader to \textit{loc.cit.}). 
In particular, if we take $d=1$ and sum the ray class zeta values twisted by $\chi_n$, 
one can recover the linearity of the  partial Hecke L-values.
For $d >1$, the same process concludes 
the polynomial behavior
of the partial Hecke values at $s=0$.

The purpose of this article is to generalize our earlier work to  special values at every nonpositive integer of the ideal class partial zeta functions
under the same assumption for the family $(K_n,\fb_n)$.
We assume again $\fb_n^{-1} = [1,\omega(n)]$ where $\omega(n)$ has purely
periodic continued fraction expansion of fixed period $r$:
$$
\omega(n) = [[a_0(n), a_1(n),\cdots,a_{r-1}(n)]]
$$
such that and $N(\fb_n)N(x\omega(n)+y) = b_0(n) x^2 + b_1(n) xy + b_2(n) y^2$
for $a_i(n)$  and $b_i(n)$ being integer coefficient  polynomials. 
For the next two theorems, let $\ell$ be the even period of $\omega(n)$ (hence independent of $n$ and 
$\ell=2r$(reps. $r$) if $r$ is odd(reps. even)).

Our main result in this paper is as follows:
\begin{thm}\label{polynomial_behavior}
Let $N$ be a fixed subset of $\FN$.
Suppose $(K_n, \fb_n)$ satisfies the above condition for every $n\in N$.
Then the special value of the partial zeta function of $\fb_n$ at $s=-k$  for $k=0,1,2,\cdots$, is 
given by a polynomial in $n$:
$$
\zeta_{K_n}(-k,\fb_n) = A_0 + A_1 n + A_2 n^2 +  \ldots + A_m n^m
$$
of degree bounded by $m=kC+D$ with the coefficients $A_i \in \frac{1}{C_k}\FZ$, 
where $C$, $D$ and $C_k$ are given as follows:
\begin{equation*}
\begin{split}
C&=2 \deg \alpha_{\ell-1}+ \deg b_1 \\
D&=\max_{0\leq i\leq r-1}\{\deg a_i\}
\end{split}
\end{equation*}
and 
\begin{equation*}
\begin{split}
C_k &= \text{LCM of }\\
& \Big\{ \text{the denominators of $\frac{B_{i+1}B_{2k+1-i}}{(i+1)(2k+1-i)}$ and $ \frac{B_{2k+2}}{(2k+2)(2k+1) }\begin{pmatrix}2k \\  i\end{pmatrix} ^{-1}$}\Big\}_{ 0\leq i \leq k}.
\end{split}\end{equation*}
\end{thm}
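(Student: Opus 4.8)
The plan is to express the special value as a regularized sum of a polynomial over lattice points in a cone and then run it through the Todd/Euler--Maclaurin machinery described above. I would first write
\[
\zeta_{K_n}(s,\fb_n)=\sum_{\mu}N(\mu)^{-s},
\]
where $\mu$ ranges over the totally positive elements of $\fb_n^{-1}=[1,\omega(n)]$ lying in a fixed fundamental domain for the action of the totally positive units; this domain corresponds to one even period $\ell$ of the continued fraction of $\omega(n)$. Using the purely periodic positive continued fraction, one obtains a virtual (signed) decomposition of this domain into finitely many two-dimensional simplicial lattice cones whose successive generators are the vectors $(\alpha_i(n),\alpha_i'(n))$ furnished by the convergents. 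Because the period $\ell$ is independent of $n$, the number of cones and their combinatorial type are fixed once and for all.

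At $s=-k$ the summand becomes the homogeneous polynomial
\[
N(\mu)^{-s}=N(\mu)^{k}=\frac{\bigl(b_0(n)x^2+b_1(n)xy+b_2(n)y^2\bigr)^{k}}{N(\fb_n)^{k}},
\]
so to each simplicial cone I would apply the asymptotic Euler--Maclaurin formula: act on the integral of $N(x)^k$ over a small perturbation of the cone by the product of the two facet Todd operators, and then evaluate at zero displacement. The additivity of the Todd series under the cone subdivision guarantees that summing the pieces, with the signs prescribed by the virtual decomposition attached to the positive continued fraction, reassembles precisely the regularized sum over the whole domain, hence the value $\zeta_{K_n}(-k,\fb_n)$.

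The crucial observation is then that every quantity entering this finite expression --- the cone generators $\alpha_i(n)$ and the coefficients $b_i(n)$ of the norm form --- is by hypothesis a polynomial in $n$, whereas the numerical constants emitted by the Todd operators are rationals assembled from Bernoulli numbers. Thus $\zeta_{K_n}(-k,\fb_n)$ is a polynomial in $n$. For the degree I would compare homogeneities: the dominant vertex contribution evaluates the $k$-th power of the norm form at the largest generators, and since the norm form at generators of degree $\deg\alpha_{\ell-1}$ has degree $2\deg\alpha_{\ell-1}+\deg b_1$ in $n$, its $k$-th power has degree $kC$; the additive $D=\max_{0\le i\le r-1}\deg a_i$ arises from the edge corrections, whose lattice lengths along the rays are governed by the individual partial quotients. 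Tracking the Bernoulli numbers that actually survive when the two facet operators hit $N(x)^k$ --- whose indices must add to $2k+2$, producing the pairings $\frac{B_{i+1}B_{2k+1-i}}{(i+1)(2k+1-i)}$ together with the term $\frac{B_{2k+2}}{(2k+2)(2k+1)}\binom{2k}{i}^{-1}$ --- shows that the coefficients $A_i$ lie in $\frac{1}{C_k}\FZ$ with $C_k$ the stated least common multiple.

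The main obstacle I anticipate is the analytic one: justifying that the genuinely divergent sum $\sum N(\mu)^k$ is correctly computed by the finite Todd expansion, i.e.\ that the meromorphically continued special value coincides with the Euler--Maclaurin output, and that the boundary identifications made along the cone subdivision cancel so that only the stated Bernoulli combinations remain. Once the Euler--Maclaurin formula of the earlier sections is in place, the remaining degree and denominator bookkeeping should be routine.
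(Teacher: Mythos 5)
Your overall route is the paper's: decompose the cone attached to one period of the positive continued fraction, push the computation through the Todd/Euler--Maclaurin operators, observe that every input ($a_i(n)$, $b_i(n)$, $\alpha_i(n)$, $\beta_i(n)$) is a polynomial in $n$, and then do the degree and Bernoulli-denominator bookkeeping. The paper's actual proof of this theorem is just that bookkeeping applied to the closed formula of Theorem \ref{2nd_main}: polynomiality is immediate, the inequalities $\deg\alpha_i\geq\deg\beta_i$ and $\deg\alpha_i\geq\deg\alpha_{i-1}$ locate the top-degree term in the summand $i=\ell-1$ (giving $m=kC+D$, with $D$ coming from the factor $a_{\ell-i}(n)$ exactly as you say), and the denominators are read off from the coefficients of $L_k$ and $R_k$ after the derivatives hit $Q^k$ and cancel the factorials. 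Your identification of the surviving Bernoulli combinations is consistent with this.

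The one step that would fail as literally written is the analytic core you yourself flag: you propose to act with the facet Todd operators on ``the integral of $N(x)^k$ over a small perturbation of the cone.'' That integral is divergent (the cone is unbounded and $N(x)^k$ grows), just as the lattice sum $\sum N(\mu)^k$ is, so the Euler--Maclaurin identity you want to invoke has no well-defined sides. The paper's resolution is not a regularization of that sum but a detour through the exponential series: one applies the Karshon--Sternberg--Weitsman formula (with remainder) to $f(x)=e^{-Q(x)t}$, which is rapidly decaying on the cone so that both the weighted lattice sum and $\int_{\sigma(h)}e^{-Q(x)t}\,dx$ converge, and then recovers $\zeta(-k,\fb)=(-1)^k k!\,c_k$ from the coefficient of $t^k$ in the resulting asymptotic expansion (Zagier's lemma, Prop.~2 of \cite{zagier}). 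This is exactly what Theorem \ref{2nd_main} packages, together with the separate vanishing of the $\mathcal{R}$-part proved in Sec.~\ref{vanishing_section}, which is needed so that no $\alpha_i$ survives in a denominator and the coefficients really land in $\frac{1}{C_k}\FZ$. Once you replace your divergent-integral step by an appeal to that theorem, the rest of your argument coincides with the paper's proof.
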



Our main theorem is a direct consequence of the following
estimation of the partial zeta values of an ideal $\fb$ in a real quadratic field $K$.

Let $\fb$ be an integral ideal such that $\fb^{-1}= [1,\omega]$ for $\omega>1$ and $0 < \omega' <1$. 
Let $\alpha_i,\beta_i$ are coordinates of some lattice vectors determined by
(the continued fraction of) $\omega$. 
In particular, $\alpha_{\ell-1}\omega + \beta_{\ell-1}1$ 
is the totally positive fundamental unit
of $K=K_n$. See Sec.\ref{cfraction_decomp} for details. As usual, $B_i$ denotes the i-th Bernoulli number.

\begin{thm}\label{2nd_main}
Let $\fb$ be an ideal of a real quadratic field $K$ such that $\fb^{-1}=[1,\omega]$ where
$\omega = [[a_0, a_1,\ldots, a_{r-1}]]$. 
Then we have
\begin{equation*}
\begin{split}
&\zeta(-k,\fb) =\\
&\sum_{i=0}^{l-1}(-1)^{i-1}L_k(\partial_{h_1},\partial_{h_2})Q(\alpha_ih_1-\alpha_{i-1}h_2,\beta_i h_1-\beta_{i-1}h_2)^k
\\
&+\frac{B_{2k+2}}{(2k+2)!}\sum_{i=0}^{l-1}(-1)^ia_{\ell-i}R_k(\partial_{h_1},\partial_{h_2})Q(\alpha_{i-2}h_1+\alpha_{i}h_2,\beta_{i-2}h_1+\beta_{i}h_2)^k
\end{split}
\end{equation*}
\end{thm}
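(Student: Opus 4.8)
The plan is to realize $\zeta(-k,\fb)$ as a weighted lattice-point sum over the totally positive cone attached to $\fb^{-1}=[1,\omega]$ and then evaluate it by the asymptotic Euler--Maclaurin formula. Writing every integral ideal in the class of $\fb$ as $\mu\fb$ with $\mu\in\fb^{-1}$ totally positive and unique modulo the totally positive units, one has
$$
\zeta(s,\fb)=\sum_{\mu}\frac{1}{Q(\mu)^{s}},\qquad Q(x,y):=N(\fb)\,N(x\omega+y),
$$
where $\mu=x\omega+y$ runs over the lattice points of a fundamental domain $D$ for the action of the totally positive fundamental unit $\varepsilon=\alpha_{\ell-1}\omega+\beta_{\ell-1}$. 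The first step is to use the positive continued fraction of $\omega$ to cut $D$ into the two-dimensional cones $\sigma_i$ spanned by consecutive convergent vectors $v_{i-1}=\alpha_{i-1}\omega+\beta_{i-1}$ and $v_i=\alpha_i\omega+\beta_i$. Since $Q$ factors as the product of the two conjugate linear forms, on each $\sigma_i$ the sum is a Barnes--Shintani double series in the two edge directions.

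Next I would evaluate each cone contribution at $s=-k$ through the Mellin representation $Q^{-s}=\Gamma(s)^{-1}\int_0^\infty t^{s-1}e^{-tQ}\,dt$, which identifies $\zeta_{\sigma_i}(-k)$ with the regularized sum $\sum_{\mu\in\sigma_i}Q(\mu)^{k}$, extracted as the relevant coefficient in the small-$t$ expansion of $\Theta_{\sigma_i}(t)=\sum_{\mu\in\sigma_i}e^{-tQ(\mu)}$. By the asymptotic Euler--Maclaurin formula established earlier in the paper, this regularized sum is computed by applying the Todd series of differential operators in the facet-perturbation variables $h_1,h_2$ to $\int_{\sigma_i(h)}e^{-tQ}\,dx$ and setting $h=0$. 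Shifting the apex of $\sigma_i$ by $(h_1,h_2)$ and using that $(v_{i-1},v_i)$ is a unimodular basis, the relevant homogeneous polynomial emerging from the perturbed integral is $Q(\alpha_i h_1-\alpha_{i-1}h_2,\beta_i h_1-\beta_{i-1}h_2)^{k}$, and the surviving part of $\td(\partial_{h_1},\partial_{h_2})$ acting on it in total degree $2k$ is what the statement records as $L_k$.

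Finally I would assemble the cones using the additivity of the Todd series with respect to cone subdivision. The signed sum $\sum_i(-1)^{i-1}L_k(\cdots)$ over the geometric cones $\sigma_i$ produces the first sum of the statement, the signs recording the orientations in the telescoping decomposition. The positive continued fraction, however, only yields a \emph{virtual} subdivision of $D$: the triangles it produces overlap along the interior rays $v_{i-1}$ and overcount the lattice points on them. Correcting this overcount contributes the one-dimensional edge Euler--Maclaurin terms, which carry the single Todd/Bernoulli factor $\frac{B_{2k+2}}{(2k+2)!}$, the multiplicities $a_{\ell-i}$ coming from the partial quotients in the continued-fraction word, and the reduced operator $R_k$; since each such ray has neighbours $v_{i-2}$ and $v_i$, the argument becomes $Q(\alpha_{i-2}h_1+\alpha_ih_2,\beta_{i-2}h_1+\beta_ih_2)^{k}$. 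Periodicity of the continued fraction of even length $\ell=2r$ together with $v_{\ell-1}=\varepsilon$ closes the indices up cyclically, all read mod $\ell$.

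The main obstacle I expect is twofold. First, one must verify that the \emph{asymptotic} Euler--Maclaurin expansion computes the special value \emph{exactly} at the nonpositive integer $s=-k$, i.e.\ that the non-polynomial remainder contributes nothing to the extracted coefficient; this rests on the splitting of $Q$ into two linear forms, which is exactly what allows the genuinely two-variable operator $\td(\partial_{h_1},\partial_{h_2})$ to act and forces the relevant part to terminate against the homogeneous integrand. Second, and more delicate, is the combinatorial bookkeeping of the virtual decomposition: one has to show that the interior-ray overcount is cancelled \emph{precisely} by the second sum, matching every multiplicity $a_{\ell-i}$ and each shifted index pattern $\alpha_{i-2}h_1+\alpha_ih_2$ across the full even period, so that the signed total of the two sums reproduces the unit-invariant fundamental-domain sum and hence $\zeta(-k,\fb)$.
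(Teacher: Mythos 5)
Your toolkit is the right one (the weighted cone sum for $\zeta(s,\fb)$, Zagier's extraction of $\zeta(-k)$ from the small-$t$ expansion of $\sum e^{-tQ}$, Euler--Maclaurin via Todd operators, and Todd additivity along the continued fraction), but the way you assemble it breaks down at the central step. You decompose the \emph{domain} into the cones $\sigma_i$ spanned by consecutive convergents of the \emph{positive} continued fraction and propose to evaluate each $\Theta_{\sigma_i}(t)=\sum_{\mu\in\sigma_i}e^{-tQ(\mu)}$ by Mellin/asymptotic expansion. This is not executable: the positive continued fraction gives only a virtual (signed) decomposition precisely because the convergents $B_0,B_{-1},B_{-2},\dots$ alternate between the first and fourth quadrants of the Minkowski space, so each individual $\sigma_i$ leaves the region where $Q>0$; the series $\Theta_{\sigma_i}(t)$ diverge and neither side of the Euler--Maclaurin formula makes sense cone by cone. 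The paper flags exactly this obstruction and circumvents it by never decomposing the domain: the integration stays over the single perturbed cone $\sigma(h)$ attached to $\fb^{-1}$, where $e^{-Q}$ is rapidly decreasing, and the continued-fraction decomposition is applied instead to the \emph{dual} cone $\check\sigma$ inside the Todd differential operator, via the additivity of the normalized Todd series (Prop.~\ref{todd} and Prop.~\ref{homo}); the arguments $Q(\alpha_ih_1-\alpha_{i-1}h_2,\dots)$ then come out of Lemma~\ref{computation2}, not out of perturbing each subcone's own facets.

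The second gap is the provenance of the $R_k$-sum. It is not an edge-overcounting correction: with the weight $1/2$ on boundary rays a genuine subdivision produces no overcount, and a one-dimensional Euler--Maclaurin correction would mix all Bernoulli numbers $B_{2j}/(2j)!$ in degree $2k$, not the single factor $B_{2k+2}/(2k+2)!$ times $R_k$. In the paper that sum arises algebraically from the polar (degree $-1$) parts $\frac{B_{2k+2}}{(2k+2)!}\bigl(M_i^{2k+1}/M_{i+1}+M_{i+1}^{2k+1}/M_i\bigr)$ of the normalized Todd series of the unimodular pieces, which telescope through the recursion $M_{i+1}=-a_{\ell-i-1}M_i+M_{i-1}$ using $R_k(X,Y)=(X^{2k+1}-Y^{2k+1})/(X-Y)$. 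That telescoping leaves boundary terms (the operator $\mathcal{R}$, involving $M_0^{2k+1}$, $M_{\ell-2}^{2k+1}$ and $a_\ell R_k(M_{-2},M_0)$) whose vanishing is a separate, nontrivial fact proved in Sec.~\ref{vanishing_section} via Zagier's identity relating $f_k$ and $d_{i,2k-i}$. Your proposal neither produces these boundary terms nor addresses their cancellation, and without that vanishing the stated identity does not follow.
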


In the above, $L_k$ and $R_k$ are the homogeneous polynomials of degree {2k}:
\begin{align}
L_k(X,Y)&=\sum_{i=1}^{2k+1}\frac{B_{i}}{i!}\frac{B_{2k+2-i}}{(2k+2-i)!}X^{i-1}Y^{2k-i+1},\\
R_k(X,Y)&=X^{2k}+X^{2k-1}Y+ \cdots+Y^{2k}.
\end{align}

It is not surprising that this behavior of the partial zeta or L-values is related
to the pattern of the continued fractions in the family if we note that the Shintani cone decomposition 
arises in relation to the continued fraction.
The significance of this fact lies on that
for real quadratic fields, the regulator is controlled not only by the discriminant but also
by the period  $r$ of the positive continued fraction of the reduced element  $\omega$ in $K$. 
This is due to the following well-known upper bound:
$$
R_K \le r \log \sqrt{D_K}.
$$

Study of special values of zeta or L-functions goes back to Euler. Euler evaluated $\zeta(-k)$ 
for $k=0,1,2,\ldots$, by using Euler-MacLaurin summation formula(cf. \cite{Car}). 
Later on, 
Siegel computed the values at nonpositive integers of  $\zeta_K(\fb,\ff,s)$ the ray class
partial zeta function for an ideal $\fb$ in a totally real number field $K$ w.r.t. a conductor $\ff$ 
based on the theory of modular forms(\cite{Siegel}). Shintani established a combinatorial description of the zeta values at nonpositive integers(\cite{Shin}). Beside the complicated contour integral, Shintani's method is a reminiscence of Euler's. 
Similar approach was taken independently by Zagier in his evaluation of the partial zeta functions
of real quadratic fields at non-positive integers(\cite{zagier}). 
Actually, the Shintani's method has a strength over Siegel's that it is ready to use in 
p-adic interpolation in case of totally real fields via Cartier duality. 
This view
was clarified by Katz in \cite{katz_another}.

Our evaluation is along with the line of Shintani and Zagier. 
We will apply a version of Euler-Maclaurin summation 
formula due to Karshon-Sternberg-Weitsman(\cite{ka}). 
In \textit{loc.cit.}, they made a version of Euler-Maclaurin
formula taking care of the remainder term, so that one can apply this to expand  asymptotically a function given as summation of exponentials. Since one side of the Euler-Maclaurin formula is application of
appropriate version of Todd differential operator, the decomposition of Shintani cone is reflected 
additively due to the additivity of the Todd series under cone decomposition(See SS.\ref{2d-Euler-M}.). 

Similar computation was done by Garoufalidis-Pommersheim(\cite{Pom}). 
They applied the Euler-Maclaurin summation formula of Brion-Vergne(\cite{Brion}) to obtain the asymptotic expansion. 
Brion-Vergne's formula is exact summation on the lattice points inside  a simple polytope valid for
polynomials or polynomials in exponentials of linear forms. 
They took the Shintani cone as the cone over a lattice polytope and
varied the size inside the cone. 
In their treatment, the exact and the error terms are considered separately. The formula of Brion-Vergne is 
applied to the exact term and the error term is shown to be appropriately bounded.


Our method differs from 
that of Garoufalidis and Pommersheim 
 in two directions: 
First, we used positive continued fraction while they took negative continued fractions. Basically, via the transition formula between  positive and negative continued fractions, they contain more or less the same information of the ideal. 
But as is pointed at the beginning,  it is important to note that the period of the two continued fractions have no control on each other. 
As the regulator is concerned,
it is better to express the zeta values 
using the terms of the positive continued fraction. 
Nevertheless, our earlier work has been made
via translation of the terms of the positive continued fraction into those of negative continued fraction. 
Hence the direct use of the positive continued fraction significantly reduces the amount of the computations needed.
While the negative continued fraction
yields an actual cone decomposition, the positive continued fraction gives rise to a virtual 
cone decomposition. 
In fact, the cone decomposition appeared at first to be a fan in toric geometry where no virtual
decomposition is allowed to define a toric variety. 
The Todd additivity can be simply extended to virtual decompositions if we take care of the orientation
of the cone and take it as the sign of the Todd series. 
%
Second, we make a direct use of  the Euler-Maclaurin formula of Karshon-Sternberg-Weitsman.
Since the exponentials in our case is of Schwarz class toward the infinity of the cone to evaluate. 
Thus both sides of the Euler-Maclaurin formula make sense and we have more transparent proof.
In addition, Karshon-Sternberg-Weitman's version of Euler-Maclaurin has advantage over Brion-Vergne's
in that no limitation on the function to integrate while Brion-Vergne's, since the former can be applied to wider 
range of functions.

Again, some part of the computation is similar to what had been done by Zagier(\cite{zagier}).
He obtained the partial zeta values at non-positive integers again by  decomposition 
of the underlying cone of the zeta summation according to the negative continued fraction 
together with the Euler-Maclaurin formula. One should
note that this decomposition appears in the dual side while the additive decomposition of the Todd 
differential operator is taken in this paper and \cite{Pom}.
It was already pointed out in \cite{Pom} as ``M-additivity'' to ``N-additivity''. 
One could ask direct application of Zagier's method with the cone decomposition from the positive continued fraction. 
Unfortunately, in this setting, the both sides of Euler-Maclaurin formula don't make sense
but need certain renormalization process to avoid operations with infinity. 
In our setting, this is no problem as the domain of integration remains the same 
while the cone decomposition is reflected to the Todd differential operator.

The plan of this paper is as follows: First,  we rewrite the partial zeta function of an ideal as a zeta function of a quadratic form
weighted by a fundamental lattice cone of the Shintani decomposition(Sec. \ref{partial_zeta_cone}). 
We recall a standard asymptotic method to evaluate the zeta values at nonpositive integers and rebuild
a version of Euler-Maclaurin formula (Sec.\ref{asymptotics}-\ref{2d_EM}). 
Then we apply this Euler-Maclaurin formula to obtain an expression of the zeta values and 
another expression after the cone decomposition arising from the positive continued fractions(Sec.\ref{Additivity_Todd}-\ref{special_zeta_values}). 
The partial zeta values at $s=0,-1,-2$ are explicitly computed using our method and compared with previously known
results(Sec.\ref{computations}). 
Sec.\ref{vanishing_section}, which is technical and similar to the  computation by Zagier(\cite{zagier}), 
is devoted to the proof of vanishing of a part skipped in the previous sections. Finally, we apply this to some families of
real quadratic fields to prove our main theorem(Thm.\ref{polynomial_behavior}) and the polynomials are explicitly computed out for some families and for some small $k$(Sec.\ref{polynomial_family}).

\section{Partial zeta function of real quadratic fields}
\label{partial_zeta_cone}

\subsection{Partial zeta function}

Let $K$ be a real quadratic field and $\fb$ be an ideal.
Through out this article, by partial zeta function, we mean the partial zeta function of an ideal class in narrow sense.
The partial zeta function of an ideal $\fb$ is defined as
 $$\zeta(s,\fb):=\sum_{\substack{\fa\sim\fb\\ \fa: integral}}N(\fa)^{-s}$$
where $\fa\sim\fb$ means $\fb=\alpha\fa$ for totally positive $\alpha$ in $K$. 
This infinite series defines a holomorphic
function in the region $\Re (s)>1$ of the complex plane and has a meromophic continuation to the entire complex plane.
Since for an integral ideal $\fa$ in the narrow class of $\fb$ there exists totally positive element $a \in \fb^{-1}$ such that $\fa= a\fb$ and
vice versa, we can write again
$$
\zeta(s,\fb) = \sum_{[a] \in (\fb^{-1})^+/E^+} N(a\fb)^{-s},
$$
where $(\fb^{-1})^+$ denotes the set of totally positive elements of $\fb^{-1}$ and $E^+=E^+_K$ denotes the group of 
totally positive units of $K$.
Now we are going to describe the summation as taken inside the Minkowski space of $K$. 
Let  $(\iota_1,\iota_2 )$ be two real embeddings of $K$. Let us denote the Minkowski space of $K$ by
$$
K_\FR = K \otimes_{\FQ} \FR = K_{\iota_1} \times K_{\iota_2}
$$
Then one can identify an ideal $\fc$  with a lattice of $K_\FR$ given by its image under the diagonal embedding of $K$ into $K_\FR$:
$$
\iota = (\iota_1,\iota_2): K \to K_\FR,\quad(\iota(a)\mapsto (\iota_1(a),\iota_2(a))
$$
This is a full lattice in the Minkowski space.

$E_K^+$  acts on the 1st quadrant of $K_\FR$
by coordinate-wise multiplication after the diagonal embedding.
Let $\epsilon$ be the totally positive fundamental unit of $K$.
A fundamental domain of this action
is given as a half-open cone $F_K$ of $K_\FR$ with basis $\{\iota(1),\iota(\epsilon)\}$:
$$
F_K = \{ x \iota(1) + y \iota(\epsilon) \in \FR^2 | x \ge 0, y >0\}.
$$
For an ideal $\fa$ of $K$ or a lattice $\Lambda$ of $K_\FR$, we denote its intersection with $F_K$  by $F_K(\fa)$ or $F_K(\Lambda)$, respectively.

For $[a]\in (\fb^{-1})^+/E^+$, there is a unique representative $a$ 
chosen
in $F_K(\fb^{-1})$.
Thus we have
$$
\zeta(s,\fb) = \sum_{a \in F_K(\fb^{-1})} N(a\fb)^{-s}.
$$

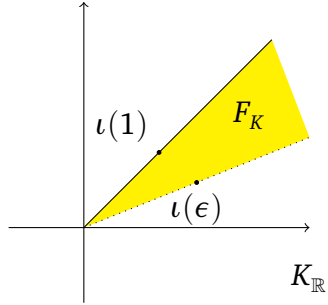
\begin{figure}\label{fundamental_cone}
\begin{tikzpicture}
\draw [->] (-1,0) -- (3,0);
\draw [->] (0,-1) -- (0,3);
\path [fill=yellow] (0,0) -- (2.5,2.5) --(3,1.2) -- (0,0);
\draw (0,0) -- (2.5,2.5);
\draw [fill] (1,1) circle [radius=0.025];
\node [above left] at (1,1) {$\iota(1)$};
\draw [fill] (1.5,0.6) circle [radius=0.025];
\node [below] at (1.5,0.6) {$\iota(\epsilon)$};
\node [above] at (3,-1) {$K_\FR$};
\draw [dotted] (0,0) -- (3,1.2);
\node at (2.2,1.5) {$F_K$};
\end{tikzpicture}
\caption{A fundamental cone of $E_+$-action}
\end{figure}

\subsection{Zeta function of 2-dimensional cones}
Consider the standard lattice $M :=\FZ^2$ in $\FR^2$.
Let $Q(x,y)=a x^2 + b xy + c y^2$ be a quadratic form.
For two linearly independent vectors $v_1, v_2$, let $\sigma(v_1,v_2)$ be the  cone in $\FR^2$ 
as
the convex hull of the two rays $\FR^+ v_1$, $\FR^+ v_2$:
$$\sigma(v_1,v_2) := \{x_1v_1+x_2v_2\,|\, x_i>0 \text{ for } i=1,2\}.$$
For simplicity, we write $\sigma$ instead of $\sigma(v_1,v_2)$ if $v_1,v_2$ is clear from the context.
Following our convention on cones, the origin is not contained in $\sigma$.



Define a weight function $wt_{\sigma}$ with respect to $\sigma$ as follows:
\begin{equation}\label{zwt}
wt_{\sigma}(\ell)=
\begin{cases}1 & \ell\in int(\sigma)\\
\frac{1}{2} & \ell \in \partial(\sigma) -{(0,0)}\\
0 & \text{otherwise}
\end{cases}
\end{equation}

This strange weight is justified via identification of the partial zeta function with  the zeta function of a lattice cone that will be defined soon below.
The partial zeta function is a sum over the points of $F_K(\fb^{-1})$. Since the two edges of $F_K$ are related by the multiplication
of the totally positive unit, the summands over both edges coincide. When we take $F_K$ as half-open cone, this repetition is automatically removed. Equivalently, we may apply this weight function so that the total contribution over an orbit equals $1$.
The choice of assigning $1/2$ to each edge will be found useful when we apply the Euler-Maclaurin formula to the cone.

For $\sigma$ and  a quadratic form $Q(-)$ satisfying $Q(v) >0$ for $v\in \sigma$, we define
a zeta function as
the following series on $\Re(s)>1$:
$$
\zeta_Q(s,\sigma) := \sum_{m \in M} \frac{wt_\sigma(m)}{Q(m)^s}.
$$

\subsection{Comparison of zeta functions}

One can choose $\fb$ as an integral ideal in the same class such that
$\fb^{-1}=[1,\omega]$ for $[1,\omega]$ being the free $\FZ$-module generated by $1$ and $\omega$.
Taking $\iota(1),\iota(\omega)$ as basis of $K_\FR$, we have trivialization
$$
K_\FR \simeq \FR^2\quad \text{and}\quad \iota(\fb^{-1}) \simeq M=\FZ^2.
$$
Here, we fix the order of the basis such that $x+y \omega$ reads $(y,x)$ in $\FR^2$.

From the reduction theory of quadratic forms, we have 
a privileged choice of  $\omega$ such that
$$
\iota_1(\omega) >1, -1<\iota_2(\omega) <0.
$$
Then the totally positive fundamental unit $\epsilon$ belongs to $\fb^{-1}$ and $\epsilon = p + q\omega$ for a pair $(p,q)$ of relatively prime positive integers.

Let $\sigma$ be  lattice cone generated by $(0,1)$ and $(q,p)$, which corresponds to $F_K$.
One should be aware that this identification depends on $\fb$ and the choice of $\omega$.
Then we have for an integral ideal $\fa = a \fb$ with $a$ totally positive in $K$
$$N(\fa)= N(m\omega+n)N(\fb)=Q(m,n),$$
for $a = m \omega +n\in F_K(\fb^{-1})$.
Thus we have the following identification of zeta functions:
\begin{lem}\label{zeta}
Let $Q(m,n) = N(m\omega+n)N(\fb) $ and $\sigma$ be a cone defined as above.
Then we have
$$
\zeta(s,\fb) =\zeta_Q (s,\sigma).
$$
\end{lem}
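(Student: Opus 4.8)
The plan is to unwind both sides into genuine lattice sums over the trivialized cone and then reconcile the two boundary rays by means of the $E^+$-action. Since the display immediately preceding the lemma gives $\zeta(s,\fb)=\sum_{a\in F_K(\fb^{-1})}N(a\fb)^{-s}$, I would first pass through the trivialization $K_\FR\simeq\FR^2$, $\iota(\fb^{-1})\simeq M=\FZ^2$, under which a totally positive element $a=m\omega+n$ of $\fb^{-1}$ becomes the lattice point $(m,n)$, and under which $F_K$ becomes the half-open cone in $\FR^2$ spanned by $\iota(1)=(0,1)$ and $\iota(\epsilon)=(q,p)$. Using $N(a\fb)=N(a)N(\fb)$ together with the definition $Q(m,n)=N(m\omega+n)N(\fb)$, the left-hand side turns into $\sum Q(m,n)^{-s}$ taken over the points of $M$ lying in this half-open cone.

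Next I would compare this half-open sum with the weighted full-lattice sum $\zeta_Q(s,\sigma)=\sum_{m\in M}wt_\sigma(m)\,Q(m)^{-s}$. On the interior of $\sigma$ the two agree term by term, since every interior lattice point is counted exactly once in $F_K$ and carries weight $wt_\sigma=1$. The only discrepancy is on the two bounding rays. Reading off from $F_K=\{x\iota(1)+y\iota(\epsilon):x\ge 0,\,y>0\}$, the half-open cone contains the entire ray $\FR^+\iota(\epsilon)$ (the $x=0$ locus) but none of the ray $\FR^+\iota(1)$ (the $y=0$ locus), whereas $wt_\sigma$ assigns each edge point the weight $\frac{1}{2}$. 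Hence the difference between the two sums is exactly $\frac{1}{2}$ times the sum over the $\iota(\epsilon)$-edge minus $\frac{1}{2}$ times the sum over the $\iota(1)$-edge.

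The crux is therefore to show that these two edge sums coincide, which is where the ``strange weight'' is justified. Here I would use that $\epsilon$ is a totally positive unit, so $N(\epsilon)=1$ and multiplication by $\epsilon$ maps $\fb^{-1}$ bijectively onto itself; in coordinates it sends the primitive point $\iota(1)=(0,1)$ to $\iota(\epsilon)=(q,p)$, hence gives the bijection $(0,n)\leftrightarrow(nq,np)$ between the lattice points on the two rays. Because $Q(\epsilon a)=N(\epsilon)Q(a)=Q(a)$, the quadratic form, and so each summand $Q^{-s}$, is preserved under this bijection; the two edge sums are therefore equal and the discrepancy vanishes, giving the claimed identity on the two sums.

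Finally I would note that all the manipulations are valid in the half-plane $\Re(s)>1$: the generators $(0,1)=\iota(1)$ and $(q,p)=\iota(\epsilon)$ are totally positive, so $Q$ is strictly positive throughout $\sigma$; the origin, where $Q$ vanishes, is excluded on both sides (by $wt_\sigma(0,0)=0$ and by the condition $y>0$); and each series converges absolutely there. Reassembling yields $\zeta(s,\fb)=\zeta_Q(s,\sigma)$. I expect the genuinely delicate point to be the edge-matching step, namely identifying correctly which edge $F_K$ retains and verifying both the lattice bijection and the $Q$-invariance induced by $\epsilon$; the remaining steps are a matter of unwinding the definitions.
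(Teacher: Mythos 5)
Your proof is correct and follows exactly the justification the paper itself gives (in the prose surrounding the definition of $wt_\sigma$, since the lemma is stated without a formal proof): the interior terms match by unwinding definitions, and the two half-weighted edge sums are reconciled because multiplication by the totally positive unit $\epsilon$ carries the lattice points of the $\iota(1)$-ray bijectively onto those of the $\iota(\epsilon)$-ray while preserving $Q$, as $N(\epsilon)=1$. Your identification of which edge the half-open cone $F_K$ retains is also consistent with the paper's convention $x\ge 0$, $y>0$.
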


\section{Euler-Maclaurin formula and Zagier's asymptotics}\label{asymptotics}

In the previous section, we have identified the partial zeta function of an ideal as a zeta
function of a quadratic form defined over a lattice cone.

To evaluate the values at nonpositive integers, we will apply
Zagier's asymptotic method to the exponential series associated to the zeta function of quadratic
form running over the lattice points of the considered cone.
The  coefficients of the asymptotic expansion of the exponential series which will be  obtained 
via Euler-Maclaurin formula are the zeta values at nonpositive integers up to some simple factors.


We first recall the asymptotic method of Zagier then 
state  the appropriate Euler-Maclaurin formula for our case. 

\subsection{Zagier's asymptotic method}

For a Dirichelet series of the following form
$$
\zeta_\Delta(s) := \sum_\lambda \frac{a_\lambda}{\lambda^s}, \quad\ \{ \lambda \} \subset \FR^+,\quad\ \lambda \rightarrow \infty,
$$
if meromophically continued to the entire complex plane,
there is a fairly standard approach to evaluate the values at nonpositive integers.

If $\sum_\lambda a_\lambda e^{-\lambda t}$ has asymptotic expansion $\sum_{i=-1}^{\infty} c_it^i$ at $t=0$,
then $ \zeta_\Delta(s)$ has meromorphic continuation to entire complex plane and
$$
\zeta_{\Delta}(-n) = (-1)^n n! c_n
$$
for a nonnegative integer $n$(See Prop.2 in \cite{zagier}).


Let $Q(-)$ be a quadratic form and $\sigma$ be a lattice cone such that $Q|_{\sigma}$ is positive.
The Dirichlet series defining the zeta function of $(Q,\sigma)$ yields the following exponential sum
\begin{equation}
E(t,Q,\sigma) := \sum_{\ell \in \sigma\cap M}  wt_\sigma(\ell) e^{-Q(\ell) t},
\end{equation}
where $wt(-)$ is the weight function defined for $\sigma$ in Sec. 2.


The asymptotic expansion of the above exponential series  will be computed after we 
state the appropriate version of Euler-Maclaurin formula with remainder for $\sigma$ and the weight 
in consideration. 

\section{Euler-Maclaurin formula for 2-d cones}\label{2d_EM}

\subsection{Twisted Todd and L-series}

Let $\lambda$ be an $N$-th root of $1$.
We define a $\lambda$-twist of the classical Todd series.
$$
\td^\lambda(S) =  \frac{S}{1-\lambda e^{-S}}
$$
When $\lambda=1$, this is the classical Todd series.
This version of Todd series is used in \cite{Brion} where they take  the sum 
of the values of a function at the lattice points  (strictly) inside of a simple lattice polytope.  

As we use weight $1/2$ on the boundary rays of a cone, 
another 
variant of the Todd series with $\lambda$-twist is defined as 
follows: for $\lambda$ a root of unity, 
$$
L^\lambda(S) = \frac{S}2 \frac{1+ \lambda e^{-\lambda}}{1- \lambda e^{-\lambda}}
= \frac{S}{1-\lambda e^{-S}} - \frac{S}2.
$$
We call this the $\lambda$-twisted $L$-series.
This fits well to the case when we put the weight $(1/2)^{cod}$ on a face, where `\textit{cod}'
denotes the codimension of the face. 
This is used in \cite{ka} in their version of Euler-Maclaurin formula. 
For $\lambda=1$, $L^1(S)$ is nothing but the even part of $\td(S)$.
Similarly to the case of Todd series, the series expansion at $S=0$ of $L^\lambda(S)$ is given as
$$
L^\lambda(S) = (\frac12 - \frac{\lambda}{1-\lambda}
)S+
Q_{2,\lambda}(0)S^2+Q_{3,\lambda}(0)S^3+\cdots+Q_{k,\lambda}(0)S^k + \cdots,
$$
where $Q_{i,\lambda}(x)$ is a (generalized) function of period $N$ for $i=0,1,2,\cdots$ 
defined as follows.

For $m=0$,
\begin{equation}
Q_{0,\lambda}(x)
:=-\sum_{n\in\FZ}\lambda^n\delta(x-n).
\end{equation}
 For $m > 1$,
$Q_{m,\lambda}(x)$ is an indefinite integral of
$Q_{m-1,\lambda}(x)$
with an integral constant fixed by the
boundary value condition
\begin{equation}
\int_0^{N}Q_{m,\lambda}(x)dx= Q_{m+1,\lambda}(N) - Q_{m+1,\lambda}(0) =0.
\end{equation}
Thus we have
$$\frac{d}{dx}Q_{m,\lambda}(x)=Q_{m-1,\lambda}(x)\text{\quad and\quad   }
\int_0^{N}Q_{m,\lambda}(x)dx=0.$$

Note that we are taking these $Q_{m,\lambda}$ for $m \geq 0$ as
distributions on $L^1_c([0,+\infty))$. $Q_{1,\lambda}$ is continuous and $Q_{m,\lambda}$ is $C^{m-1}$-function for $m\ge 2$.
These generalize  the periodic Bernoulli functions appearing in some literatures on analytic continuation
of the Riemann zeta function using the Euler-Maclaurin formula(eg. \cite{Car}).

\subsection{Todd series of 2-dimensional cone}\label{2-d_Todd}

Let $M=\FZ^2\subset \FR^2$ be a fixed lattice.
Recall that a {\it lattice cone} is the convex hull of two rays generated by lattice vector.
We may assume the generating vectors of a cone are primitive(i.e. not a multiple of other lattice
vector in the same ray). For two linearly independent primitive lattice vectors $v_1, v_2$, let
$\sigma(v_1,v_2)$ be the cone generated by $v_1$ and $v_2$. 
When 
$v_1,v_2$ are clear from
the context, we will simply write $\sigma$ intend of  $\sigma(v_1,v_2)$.
When there appear several cones, they will be denoted by $\sigma, \tau\ldots$ or $\sigma_1,\sigma_2,\sigma_3,\ldots$. 
Since we will be concerned with surface integral over a 2-dimensional cone the order of the basis
vectors (ie. the orientation of the cone) is important. So  $\sigma(v_1,v_2)$ is never equal to $\sigma(v_2,v_1)$. 
Taking $v_1,v_2$ as column vectors in $\FZ^2$, 
we associate a nonsingular $(2\times 2)$-matrix
$$
A_\sigma = (v_1, v_2)
$$
to a lattice cone $\sigma=\sigma(v_1,v_2)$. 
Conversely, if a $(2\times2)$-nonsingular matrix $A$ with integer coefficient has
column vectors $v_1,v_2$ which are primitive, we can associate a unique lattice cone.
A cone is said to be {\it nonsingular} if the matrix is in $GL_2(\FZ)$. Equivalently, $\sigma$ is nonsingular iff $\det(A_\sigma)=\pm 1$.

\begin{remark}
In literatures on polytopes or toric geometry, a cone is said to be simple if it is generated by $n$-linearly 
independent rays in $\FR^n$. In this article, as we are considering only 2 dimensional cones, every cone
is simple unless degenerate. 
\end{remark}

Let $M_\sigma$ be the sublattice of $M$ generated by $v_1,v_2$ and $\Gamma_\sigma$ be $M/M_\sigma$.
An element $g\in \Gamma_\sigma$ can be written as
$$
g = a_{\sigma,1}(g) v_1 + a_{\sigma,2}(g) v_2
$$
for rational numbers $a_{\sigma,1}(g), a_{\sigma,2}(g)$  modulo $\FZ$.
This is given ambiguously but yields two well-defined characters
$$
\chi_{\sigma,i} : g\mapsto e^{2\pi i a_{\sigma,i}(g)},\quad\text{for $i=1,2$.}
$$

The Todd power series for a cone $\sigma$ is defined as
$$
\td_{\sigma}(x_1,x_2):=\sum_{g\in \Gamma_\sigma}
\td^{\chi_{\sigma,1}(g)} (x_1) \td^{\chi_{\sigma,2}(g)} (x_2).
$$

Similarly, we define the L-series for $\sigma$ as
$$
L_\sigma(x_1,x_2) := \sum_{g\in \Gamma_\sigma}
L^{\chi_{\sigma,1}(g)} (x_1) L^{\chi_{\sigma,2}(g)} (x_2).
$$

These are used in the Euler-Maclaurin formula of \cite{Brion} and \cite{ka}, respectively.
For a 2-dim cone $\sigma$, the Todd and the L-series are related in the following manner:

\begin{lem}\label{tod}
$$
L_\sigma(x_1,x_2)=\td_{\sigma}(x_1,x_2)^{\text{even}}-\frac{|\Gamma_\sigma|}4x_1x_2,$$
where $\td_\sigma(x_1,x_2)^{even}$ denotes the even part of $\td_\sigma(x_1,x_2)$ 
under $(x_1,x_2)\mapsto (-x_1,-x_2)$.
\end{lem}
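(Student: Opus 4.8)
The plan is to reduce the two-variable identity to a single-variable reflection identity for the twisted Todd series and then exploit the symmetry $g\mapsto -g$ of the finite group $\Gamma_\sigma$. First I would record the two single-variable relations that drive everything. The first is the one already noted in the text, $L^\lambda(S)=\td^\lambda(S)-\tfrac{S}{2}$. The second, and the genuinely useful one, is the reflection identity
$$
\td^{\lambda^{-1}}(-S)=\td^\lambda(S)-S .
$$
This is a one-line verification from the closed form: since $\td^{\lambda^{-1}}(-S)=\dfrac{-S}{1-\lambda^{-1}e^{S}}=\dfrac{S\lambda e^{-S}}{1-\lambda e^{-S}}$, subtracting it from $\td^\lambda(S)=\dfrac{S}{1-\lambda e^{-S}}$ leaves $\dfrac{S(1-\lambda e^{-S})}{1-\lambda e^{-S}}=S$. (For $\lambda=1$ this is the familiar statement that $\td^1(S)-\tfrac{S}{2}$ is even.)

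Next I would expand both sides over $\Gamma_\sigma$ and compare. Abbreviate $\lambda_i:=\chi_{\sigma,i}(g)$ and introduce the one-variable partial sums $T_i(x_i):=\sum_{g\in\Gamma_\sigma}\td^{\chi_{\sigma,i}(g)}(x_i)$. Expanding the definition of $L_\sigma$ through $L^{\lambda_i}(x_i)=\td^{\lambda_i}(x_i)-\tfrac{x_i}{2}$ gives
$$
L_\sigma(x_1,x_2)=\td_\sigma(x_1,x_2)-\tfrac{x_2}{2}T_1(x_1)-\tfrac{x_1}{2}T_2(x_2)+\tfrac{|\Gamma_\sigma|}{4}x_1x_2 .
$$
For the right-hand side I would compute the even part. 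The key point is that $g\mapsto -g$ is a bijection of $\Gamma_\sigma$ with $\chi_{\sigma,i}(-g)=\chi_{\sigma,i}(g)^{-1}$; reindexing the sum defining $\td_\sigma(-x_1,-x_2)$ by $g\mapsto -g$ and applying the reflection identity converts each factor $\td^{\lambda_i^{-1}}(-x_i)$ into $\td^{\lambda_i}(x_i)-x_i$, so that $\td_\sigma(-x_1,-x_2)=\sum_g(\td^{\lambda_1}(x_1)-x_1)(\td^{\lambda_2}(x_2)-x_2)$. Averaging with $\td_\sigma(x_1,x_2)$ then yields
$$
\td_\sigma(x_1,x_2)^{\text{even}}=\td_\sigma(x_1,x_2)-\tfrac{x_2}{2}T_1(x_1)-\tfrac{x_1}{2}T_2(x_2)+\tfrac{|\Gamma_\sigma|}{2}x_1x_2 ,
$$
and subtracting $\tfrac{|\Gamma_\sigma|}{4}x_1x_2$ reproduces the expression for $L_\sigma$ exactly, proving the lemma.

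The main obstacle is purely bookkeeping: one must track the inversion $\lambda\mapsto\lambda^{-1}$ of the twist correctly under $g\mapsto -g$, and recognize that the cross-terms $\tfrac{x_2}{2}T_1(x_1)$ and $\tfrac{x_1}{2}T_2(x_2)$ need not vanish — they simply appear \emph{identically} on both sides, so they do not obstruct the comparison. The only genuine discrepancy is the coefficient of $x_1x_2$, which differs by exactly $\tfrac{|\Gamma_\sigma|}{4}$; this mismatch is precisely the residue of the gap between the half-shift $\tfrac{x_i}{2}$ in $L^{\lambda}$ and the full linear correction $x_i$ produced by the reflection identity, and it accounts for the correction term in the statement. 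I would take care to note that all manipulations are valid at the level of formal power series, so no convergence issues arise.
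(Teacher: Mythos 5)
Your proof is correct and follows essentially the same route as the paper's: your reflection identity $\td^{\lambda^{-1}}(-S)=\td^{\lambda}(S)-S$ is exactly the paper's relation $L^{\lambda}(S)=L^{\lambda^{-1}}(-S)$ rewritten via $L^{\lambda}(S)=\td^{\lambda}(S)-\tfrac{S}{2}$, and both arguments then average $\td_\sigma(\pm x_1,\pm x_2)$, reindex the sum over $\Gamma_\sigma$ by $g\mapsto -g$, and match the resulting $x_1x_2$ coefficients. The only cosmetic difference is that you expand $L_\sigma$ in terms of $\td$ and carry the cross-terms explicitly on both sides, while the paper expands $\td_\sigma$ in terms of $L$ and lets those cross-terms cancel between the two sums.
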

\begin{proof}
From the definition of $L^\lambda(S)$, we have
$$L^{\lambda}(S)= L^{\lambda^{-1}}(-S).
$$
Let $\lambda_{\gamma,i} := e^{2\pi i \left<\alpha_i,\gamma\right>}= \chi_{\sigma,i}(\gamma)$ for $i=1,2$.
Then we have
\begin{equation*}
\begin{split}
2\td^{\text{even}}_{\sigma}(x_1,x_2)=& 
 \td_{\sigma}(x_1,x_2)+\td_{\sigma}(-x_1,-x_2)\\
=&\sum_{\gamma\in\Gamma_{\check{\sigma}}}\prod_{i=1}^2\Big{(}L^{\lambda_{\gamma,i}}(x_i)+\frac{x_i}{2}\Big{)}+\sum_{\gamma\in\Gamma_{\check{\sigma}}}\prod_{i=1}^2\Big{(}L^{\lambda_{\gamma,i}}(-x_i)-\frac{x_i}{2}\Big{)} \\
 = & 2 L_\sigma (x_1,x_2) + \frac{|\Gamma_\sigma|}2 x_1 x_2
\end{split}
\end{equation*}
This finishes the proof.

\end{proof}

We say two cones $\sigma_1$ 
and $\sigma_2$
are {\it similar} 
if
$$A A_{\sigma_1} 
= A_{\sigma_2}
$$
for $ A\in GL_2(\FZ)$.
In this case, $A$ induces an isomorphism of $M_{\sigma_1}$ in $M_{\sigma_2}$,
which descends to isomorphism of $\Gamma_{\sigma_1}$ in $\Gamma_{\sigma_2}$.
Since this isomorphism takes the lattice generators of $\sigma_1$ to those of $\sigma_2$,
the two characters are preserved.  
\textit{A priori} the Todd series of two cones coincide. 
\begin{prop}\label{sim}
For two similar cones $\sigma$ and $\tau$, we have
$$
\td_{\sigma}(x_1,x_2)=\td_{\tau}(x_1,x_2).
$$
\end{prop}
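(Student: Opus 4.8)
The plan is to produce an explicit bijection between the index sets $\Gamma_\sigma$ and $\Gamma_\tau$ of the two defining sums and to check that it preserves each summand; the asserted equality then follows by reindexing. Since $\sigma$ and $\tau$ are similar we may write $A_\tau = A A_\sigma$ with $A\in GL_2(\FZ)$. As $A\in GL_2(\FZ)$, it is a lattice automorphism of $M=\FZ^2$, restricting to a bijection of $M$ onto itself. The sublattice $M_\sigma$ is precisely the image $A_\sigma\FZ^2$, so $M_\tau = A_\tau\FZ^2 = A(A_\sigma\FZ^2) = A(M_\sigma)$; hence $A$ carries $M_\sigma$ isomorphically onto $M_\tau$ and descends to a group isomorphism $\bar A\colon \Gamma_\sigma \to \Gamma_\tau$ sending $g+M_\sigma$ to $Ag+M_\tau$.

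Next I would verify that $\bar A$ matches the two characters attached to the respective cones. For a representative $g\in M$, regarded as a column vector, the definition $g = a_{\sigma,1}(g)v_1 + a_{\sigma,2}(g)v_2 = A_\sigma (a_{\sigma,1}(g),a_{\sigma,2}(g))^{t}$ identifies the fractional coordinates as the entries of $A_\sigma^{-1}g$. Applying $A$ and using $A_\tau = A A_\sigma$ gives $Ag = A_\tau (a_{\sigma,1}(g),a_{\sigma,2}(g))^{t}$, so the coordinates of $Ag$ with respect to the ordered generators of $\tau$, namely $A_\tau^{-1}(Ag)$, equal $(a_{\sigma,1}(g),a_{\sigma,2}(g))^{t}$ on the nose (not merely modulo $\FZ$). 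Therefore $a_{\tau,i}(Ag) = a_{\sigma,i}(g)$ for $i=1,2$, and consequently $\chi_{\tau,i}(\bar A g) = e^{2\pi i\, a_{\tau,i}(Ag)} = e^{2\pi i\, a_{\sigma,i}(g)} = \chi_{\sigma,i}(g)$.

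Finally I would reindex the sum defining $\td_\tau$ along the bijection $\bar A$. Because the $i$-th generator of $\sigma$ is sent to the $i$-th generator of $\tau$ (the ordering is respected by the identity $A_\tau = A A_\sigma$ with columns taken in the fixed order $(v_1,v_2)$), the $i$-th character is matched to the $i$-th character rather than being swapped. Hence each summand $\td^{\chi_{\tau,1}(\bar A g)}(x_1)\,\td^{\chi_{\tau,2}(\bar A g)}(x_2)$ coincides with $\td^{\chi_{\sigma,1}(g)}(x_1)\,\td^{\chi_{\sigma,2}(g)}(x_2)$, and summing over $g\in\Gamma_\sigma$ yields $\td_\sigma(x_1,x_2) = \td_\tau(x_1,x_2)$ term by term.

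There is no substantial obstacle here: the argument is a bookkeeping verification built on the single matrix identity $A_\tau = A A_\sigma$. The only points requiring care are that the fractional coordinates are preserved exactly (so that the twisting roots of unity genuinely agree) and that the bijection respects the labelling of the two characters rather than interchanging them; both are immediate consequences of reading off coordinates via $A_\sigma^{-1}$ and $A_\tau^{-1}$ with the generators listed in the same order.
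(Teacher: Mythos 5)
Your argument is correct and is exactly the reasoning the paper itself intends: the discussion immediately preceding the proposition observes that $A$ carries $M_\sigma$ onto $M_\tau$, descends to an isomorphism $\Gamma_\sigma\to\Gamma_\tau$ preserving the two characters, and the paper's proof is then simply ``Clear.'' You have filled in the same bookkeeping (coordinates read off via $A_\sigma^{-1}$ and $A_\tau^{-1}=A_\sigma^{-1}A^{-1}$, hence exact preservation of the fractional coordinates and of the labelling of the characters), so there is nothing to add.
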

\begin{proof}
Clear.
\end{proof}
One should be aware that this is not the similarity of the matrices in linear algebra. 
$\sigma(v_1,v_2)$ and $\sigma(v_2,v_1)$ are not similar in general.



\subsection{Dual cone and its lattice}

Let $N:=\Hom(M,\FZ)$ be the dual lattice of $M$.
$N$ is a lattice in the vector space $N_\FR := N\otimes \FR$.
Using the standard inner product $\left<x,y\right>$ we will often 
identify $N$ and $M$.
Associated to a lattice cone $\sigma$ in $M$, its dual cone $\check{\sigma}$
is defined as
$$
\check{\sigma} := \{ y \in N_\FR-0 | \left< y,x \right> \ge 0 \}
$$
As the orientation is concerned, $\check{\sigma}$ is endowed with the orientation given by
transpose of the matrix of $\sigma$.
Notice that $\check{\sigma}$ is again a lattice cone generated by two primitive lattice vectors inward 
and normal to $\sigma$. 
To $\check\sigma$, there are two lattices naturally associated. 
$
M_{\check{\sigma}}
$
is the sublattice of $M=N$ 
generated by the primitive lattice vectors of $\check\sigma$. 
Note that this coincides with the definition of $M_\sigma$ in \ref{2-d_Todd}.
$N_{\check\sigma} := \Hom ( M_\sigma,\FZ)$
is a lattice in $N_\FR$ generated by dual vectors $\alpha_1,\alpha_2$ 
to $v_1, v_2$ if $\sigma=\sigma(v_1,v_2)$ (ie. $
\left< \alpha_i, v_j \right> = \delta_{ij}
$).
These  are related by the following inclusion relation:
$$
M_{\check{\sigma}} \subset M = N = \FZ^2 \subset N_{\check\sigma}
$$

\subsection{Euler-Maclaurin formula for 2-d cones}
\label{2d-Euler-M}
Let $L^{k,\lambda}(S)$ be the truncation of $L^{\lambda}(S)$ below degree $k$:
$$
L^{k,\lambda}(S) = (\frac{1}{2}+\frac{\lambda}{1-\lambda})S
+Q_{2,\lambda}(0)S^2 +\cdots+ Q_{k,\lambda}(0) S^k.
$$

For a cone $\sigma$, define a weight function as
$$
wt^{KSW}_\sigma(x) =
\begin{cases}
1 &  \text{$x\in int(\sigma)$}, \\
1/2 &  \text{$x\in \partial\sigma-0$}, \\
1/4 &  \text{$x=0$},\\
0 & \text{otherwise}
\end{cases} 
$$
Then the Euler-MacLaurin formula (with remainder) \textit{\`a la} Karshon-Sternberg-Weitsman in \cite{ka} applied to $f(x)$ evaluates the summation of the values of $f(x)$ on the lattice points of $\sigma$ w.r.t. $wt^{KSW}_\sigma$.
\begin{thm}[Karshon-Sternberg-Weitsman\cite{ka}]\label{ltod}
Let $\sigma$ be a lattice cone such that
$$
\check{\sigma}=\check{\sigma}(u_1,u_2)\text{ for } u_i\in \FZ^2.
$$
Let $\alpha_i$ be a dual basis of $u_i$(ie. $\left<\alpha_i,u_j \right>=\delta_{ij}$)
and $\lambda_{\gamma,i}:=e^{2\pi i \left<\gamma, \alpha_i \right>}$ for $i,j=1,2$.
Let 
$$
\sigma(h)=\{ x\in\FR^2|\big{<}x,u_i\big{>}\geq -h_i, \quad \text{for $i=1,2$}\}
$$
for $h=(h_1,h_2)\in \FR^2$.
Suppose $f$ is a smooth integrable function on  $\sigma(h)$ for some $h\in (\FR_+)^2$. Assume further
that $f$ is rapidly decaying toward the infinity of $\sigma$. 
Then we have
$$\sum_{x\in \sigma\cap\FZ^2}wt^{KSW}_{\sigma}(x) f(x)=\sum_{\gamma\in \Gamma_{\check{\sigma}}}\Big{(}\prod_{i=1}^2L^{k,\lambda_{\gamma,i}}(\partial _{h_i})\Big{)}\int_{\sigma(h)}f(x) dx \Big{|}_{h=0}+R_k^{\sigma}(f).
$$
where
\begin{equation*}
\begin{split}
&R_k^{\sigma}(f)=\\
&\frac{1}{|\Gamma_{\check{\sigma}}|}\sum_{\gamma\in\Gamma_{\check{\sigma}}}\int_{0}^{\infty}\int_{0}^{\infty}Q_{k,\lambda_{\gamma,2}}(x_2)\partial_{x_2}^kQ_{k,\lambda_{\gamma,1}}(x_1)\partial_{x_1}^kf(x_1 \alpha_1+x_2 \alpha_2)dx_1dx_2\\
&+\frac{1}{|\Gamma_{\check{\sigma}}|}\sum_{\gamma\in\Gamma_{\check{\sigma}}}(-1)^{k-1}\int_{0}^{\infty}\int_{0}^{\infty}Q_{k,\lambda_{\gamma,2}}(x_2)\partial_{x_2}^kL^{k,\lambda_{\gamma,1}}(-\partial_{x_1})f(x_1 \alpha_1 +x_2 \alpha_2)dx_1dx_2\\
&+\frac{1}{|\Gamma_{\check{\sigma}}|}\sum_{\gamma\in\Gamma_{\check{\sigma}}}(-1)^{k-1}\int_{0}^{\infty}\int_{0}^{\infty}Q_{k,\lambda_{\gamma,1}}(x_1)L^{k,\lambda_{\gamma,2}}(-\partial_{x_2})\partial_{x_1}^kf(x_1 \alpha_1 + x_2 \alpha_2)dx_1dx_2.
\end{split}
\end{equation*}
\end{thm}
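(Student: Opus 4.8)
The plan is to reduce the two-dimensional statement to a product of one-dimensional twisted Euler--Maclaurin formulas. Passing to the coordinates $x=x_1\alpha_1+x_2\alpha_2$ dual to $u_1,u_2$ turns $\sigma$ into the standard quadrant $\{x_1,x_2\ge 0\}$, turns $\sigma(h)$ into $\{x_i\ge -h_i\}$, and turns the finer lattice $N_{\check\sigma}=\langle\alpha_1,\alpha_2\rangle$ into $\FZ^2$, in which everything factorizes. The lattice $M$ over which we must sum sits as a sublattice of $N_{\check\sigma}$ of index $|\Gamma_{\check\sigma}|$, and I would detect it by finite Fourier inversion, $\mathbf{1}_{n\in M}=\tfrac1{|\Gamma_{\check\sigma}|}\sum_{\gamma\in\Gamma_{\check\sigma}}\lambda_\gamma(n)$, with the phases $\lambda_{\gamma,i}=e^{2\pi i\langle\gamma,\alpha_i\rangle}$ appearing factorwise. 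This introduces a global $\tfrac1{|\Gamma_{\check\sigma}|}\sum_\gamma$; the Jacobian relating $dx_1\,dx_2=|\Gamma_{\check\sigma}|\,dx$ then supplies a compensating factor $|\Gamma_{\check\sigma}|$ in the principal term (cancelling the $|\Gamma_{\check\sigma}|^{-1}$ and leaving the intrinsic measure $dx$ on $\sigma(h)$), but not in the remainder, which is exactly why $R_k^\sigma$ carries the explicit factor $|\Gamma_{\check\sigma}|^{-1}$ while the principal term does not. Thus it suffices to prove the formula for the untwisted standard quadrant and then twist each coordinate by a root of unity $\lambda$.

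The one-dimensional input is the twisted formula: for $g$ smooth and rapidly decaying on $[0,\infty)$ and $\lambda$ a root of unity, I would integrate $-\int_0^\infty Q_{0,\lambda}(x)g(x)\,dx$ by parts $k$ times. Since $Q_{0,\lambda}(x)=-\sum_n\lambda^n\delta(x-n)$ is a twisted Dirac comb, the quantity $-\int_0^\infty Q_{0,\lambda}(x)g(x)\,dx$ equals the weighted sum $\sum_{n\ge 0}w(n)\lambda^n g(n)$, the boundary point $n=0$ entering with weight $\tfrac12$ on the half-line. Using $Q_{m,\lambda}'=Q_{m-1,\lambda}$ and the normalization $\int_0^N Q_{m,\lambda}=0$, the boundary contributions at $x=0$ collect into $\sum_{m=1}^k(-1)^{m-1}Q_{m,\lambda}(0)g^{(m-1)}(0)$, rapid decay kills the boundary terms at infinity, and what remains is $(-1)^{k-1}\int_0^\infty Q_{k,\lambda}(x)g^{(k)}(x)\,dx$. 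Finally I would note that $\partial_h^m\int_{-h}^\infty g\,dx|_{h=0}=(-1)^{m-1}g^{(m-1)}(0)$, so the boundary sum is precisely $L^{k,\lambda}(\partial_h)\int_{-h}^\infty g\,dx|_{h=0}$, identifying the one-dimensional principal term with the truncated $L$-operator applied to the shifted integral.

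To assemble the two-dimensional formula I would apply this one-dimensional identity first in $x_1$ and then in $x_2$ to $f(x_1\alpha_1+x_2\alpha_2)$, for each fixed twist pair $(\lambda_{\gamma,1},\lambda_{\gamma,2})$. Writing each one-dimensional output as (main)$+$(remainder) and expanding the product, the (main$_1$)(main$_2$) piece yields $\prod_i L^{k,\lambda_{\gamma,i}}(\partial_{h_i})\int_{\sigma(h)}f\,dx|_{h=0}$, the stated principal term. The three remaining pieces---(rem)(rem), (main$_1$)(rem$_2$), (rem$_1$)(main$_2$)---become the three integrals defining $R_k^\sigma(f)$: the $(-1)^{k-1}$ on each surviving remainder factor accounts for the coefficient $+1$ in the first term and $(-1)^{k-1}$ in the other two, while the main factor that still lives under an $x_i$-integral must be rewritten as the differential operator $L^{k,\lambda_{\gamma,i}}(-\partial_{x_i})$ acting directly on $f$. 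This rewriting uses integration by parts together with the functional equation $L^\lambda(S)=L^{\lambda^{-1}}(-S)$ noted in the proof of Lemma~\ref{tod}, which converts the shifted-integral form of the one-dimensional main term into an operator acting on $f$ inside the double integral over $(0,\infty)^2$. Summing over $\gamma\in\Gamma_{\check\sigma}$ and keeping the surviving normalization $|\Gamma_{\check\sigma}|^{-1}$ then produces the claimed remainder.

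The main obstacle is the bookkeeping in this last step: keeping the operator ordering, the signs $(-1)^{k-1}$, and the placement of $Q_{k,\lambda}$ versus $L^{k,\lambda}(-\partial)$ consistent when one variable has already been evaluated at $h=0$ while the other survives as an integral over $(0,\infty)$, and confirming that the $|\Gamma_{\check\sigma}|^{-1}$ lands on the remainder but not on the principal term. One must also justify every interchange of summation, integration, and differentiation, and the vanishing of all boundary contributions at the infinity of $\sigma$; here the hypothesis that $f$ is smooth and rapidly decaying toward the infinity of $\sigma$ is exactly what guarantees absolute convergence of the lattice sum, of the iterated integrals, and of the remainder, and hence makes both sides of the identity well defined.
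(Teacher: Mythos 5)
You should first note that the paper gives no proof of this statement: it is imported verbatim from Karshon--Sternberg--Weitsman \cite{ka}, so there is no internal argument to compare against. Your outline is essentially the argument of that source: detect the sublattice $M\subset N_{\check\sigma}=\FZ\alpha_1+\FZ\alpha_2$ by finite Fourier inversion over $\Gamma_{\check\sigma}$ (which factorizes the characters as $\lambda_{\gamma,1}^{x_1}\lambda_{\gamma,2}^{x_2}$), prove a one-dimensional twisted Euler--Maclaurin formula with remainder by pairing the twisted Dirac comb with $g$ and integrating by parts $k$ times, and tensor the two variables. The checkable bookkeeping is right: the weights $1,\tfrac12,\tfrac14$ arise as $w(x_1)w(x_2)$ with $w(0)=\tfrac12$; the Jacobian $dx=|\Gamma_{\check\sigma}|^{-1}dx_1dx_2$ cancels the Fourier prefactor in the principal term but not in the remainder; and the signs $(-1)^{k-1}$ distribute over the three cross terms as you say. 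One small misattribution: converting the surviving one-variable main term into $L^{k,\lambda}(-\partial_{x_i})$ under the $dx_i$-integral rests on $\int_0^\infty(-\partial_x)^mg\,dx=(-1)^{m-1}g^{(m-1)}(0)$, i.e.\ plain integration by parts against rapid decay, not on the functional equation $L^{\lambda}(S)=L^{\lambda^{-1}}(-S)$.

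The one genuine gap is the trivial character $\gamma=0$, $\lambda=1$, which carries the leading term. The untwisted comb $Q_{0,1}(x)=-\sum_n\delta(x-n)$ has $\int_0^1Q_{0,1}=-1\neq 0$, so the normalization $Q_{m+1}(N)-Q_{m+1}(0)=\int_0^NQ_m=0$ is already unsatisfiable at $m=0$ and no bounded periodic antiderivative exists; if you run your integration-by-parts chain literally, the term $\int_0^\infty g\,dx$ never appears, so the $\gamma=0$ contribution to the right-hand side omits $\int_{\sigma(h)}f\,dx$ altogether --- which is the whole content of an Euler--Maclaurin formula. The standard repair must be stated: for $\lambda=1$ take $Q_{0,1}(x)=1-\sum_n\delta(x-n)$ (zero mean), whose iterated antiderivatives are the periodic Bernoulli functions, so that $-\int_0^\infty Q_{0,1}\,g=\sum_{n\ge0}w(n)g(n)-\int_0^\infty g$ and the subtracted mean is exactly what produces the integral; correspondingly $L^{k,1}$ must be read with constant term $1$ (the closed form $\frac{S}{1-\lambda e^{-S}}-\frac{S}{2}$ does have constant term $1$ at $\lambda=1$, even though the series written in the paper, with coefficient $\frac{\lambda}{1-\lambda}$, degenerates there). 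Once the $\lambda=1$ case is split off and handled this way, the rest of your argument goes through, with the interchanges of sum, integral and derivative justified by the rapid-decay hypothesis as you indicate.
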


In the above theorem, the summation is weighted with $wt^{KSW}_\sigma$.
Now we come to the same theorem with the weight changed to $wt_\sigma$ in \eqref{zwt} 
for our purpose. 

\begin{cor}\label{wt2} With the same notations as in Thm.\ref{ltod}, we have that
$$
\sum_{x\in \sigma\cap\FZ^2} wt_{\sigma}(x) f(x)=\Big[\sum_{\gamma\in \Gamma_{\check{\sigma}}}\Big{(}\prod_{i=1}^2L^{k,\lambda_{\gamma,i}}(\partial_{h_i})-\frac{1}{4}\partial_{h_1}\partial_{h_2}\Big{)}\Big]\int_{\sigma(h)}f(x) dx \Big{|}_{h=0}+R_k^{\sigma}(f).
$$
\end{cor}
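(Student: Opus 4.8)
The plan is to derive the corollary from Theorem~\ref{ltod} by bookkeeping the single lattice point at which the two weight functions disagree. Comparing \eqref{zwt} with the definition of $wt^{KSW}_\sigma$, the functions $wt_\sigma$ and $wt^{KSW}_\sigma$ take identical values at every point of $\sigma\cap\FZ^2$ except the origin, where $wt^{KSW}_\sigma(0)=\tfrac14$ while $wt_\sigma(0)=0$. Hence for any admissible $f$ one has the exact identity $\sum_{x}wt_\sigma(x)f(x)=\sum_x wt^{KSW}_\sigma(x)f(x)-\tfrac14 f(0)$, and substituting the Euler--Maclaurin formula of Theorem~\ref{ltod} for the first sum on the right reduces the whole statement to exhibiting $-\tfrac14 f(0)$ as the advertised differential-operator correction applied to the integral.

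Next I would compute the building block $\partial_{h_1}\partial_{h_2}\int_{\sigma(h)}f(x)\,dx\big|_{h=0}$. Writing $x=x_1\alpha_1+x_2\alpha_2$ in the dual basis, so that $\langle x,u_i\rangle=x_i$ and the region $\sigma(h)$ becomes $\{x_i\ge -h_i\}$, the change of variables produces a Jacobian factor equal to $|\det(\alpha_1,\alpha_2)|=1/|\det(u_1,u_2)|$. Since $M_{\check\sigma}$ is the sublattice of $M=\FZ^2$ generated by the primitive vectors $u_1,u_2$, this determinant is exactly the index $[M:M_{\check\sigma}]=|\Gamma_{\check\sigma}|$, so that $\int_{\sigma(h)}f\,dx=\tfrac{1}{|\Gamma_{\check\sigma}|}\int_{-h_1}^{\infty}\int_{-h_2}^{\infty}f(x_1\alpha_1+x_2\alpha_2)\,dx_2\,dx_1$. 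Differentiating once in each $h_i$ by the fundamental theorem of calculus and setting $h=0$ then gives $\partial_{h_1}\partial_{h_2}\int_{\sigma(h)}f\,dx\big|_{h=0}=\tfrac{1}{|\Gamma_{\check\sigma}|}f(0)$.

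The final step assembles these pieces. The integral above is independent of the summation index $\gamma$, so distributing the extra operator across the sum gives $\sum_{\gamma\in\Gamma_{\check\sigma}}\bigl(-\tfrac14\partial_{h_1}\partial_{h_2}\bigr)\int_{\sigma(h)}f\,dx\big|_{h=0}=-\tfrac14|\Gamma_{\check\sigma}|\cdot\tfrac{1}{|\Gamma_{\check\sigma}|}f(0)=-\tfrac14 f(0)$, which is precisely the correction needed. Folding the $-\tfrac14\partial_{h_1}\partial_{h_2}$ into each summand of Theorem~\ref{ltod} while leaving the remainder $R_k^\sigma(f)$ untouched yields the stated formula.

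I expect no serious obstacle here; the one point requiring care is the identification of the Jacobian $|\det(u_1,u_2)|$ with the lattice index $|\Gamma_{\check\sigma}|$, together with the observation that the $\gamma$-sum of the identical integral supplies exactly the factor $|\Gamma_{\check\sigma}|$ that cancels it, so that placing the correction \emph{inside} the $\gamma$-sum is consistent. One should also verify that $f(0)$ is finite and that differentiation under the integral sign is legitimate, both of which follow from the smoothness and rapid-decay hypotheses already imposed in Theorem~\ref{ltod}.
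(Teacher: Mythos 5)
Your proposal is correct and follows essentially the same route as the paper: subtract $\tfrac14 f(0)$ to convert $wt^{KSW}_\sigma$ into $wt_\sigma$, then realize $f(0)$ as $|\Gamma_{\check\sigma}|\,\partial_{h_1}\partial_{h_2}\int_{\sigma(h)}f\,dx\big|_{h=0}$ via the dual-basis change of variables, so that placing $-\tfrac14\partial_{h_1}\partial_{h_2}$ inside the $\gamma$-sum supplies exactly the factor $|\Gamma_{\check\sigma}|$ needed to cancel the Jacobian. Your write-up is in fact more explicit than the paper's about the identification of the Jacobian with the lattice index $|\Gamma_{\check\sigma}|$, which the paper states without comment.
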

\begin{proof}
This comes from the following observation:
\begin{equation*}
\begin{split}
f(0,0)&=\partial_{h_1}\partial_{h_2}\int_{-h_2}^{\infty}\int_{-h_1}^{\infty}f(\alpha_1x_1+\alpha_2x_2)dx_1dx_2\Big{|}_{h=0}\\
&=
|\Gamma_{\check\sigma}|\partial_{h_1}\partial_{h_2}\int_{\sigma(h)}f(y)dy\Big{|}_{h=0}.
\end{split}
\end{equation*}
and
$$
\sum_{x\in \sigma\cap\FZ^2}wt_{\sigma}(x) f(x)=\sum_{x\in \sigma\cap\FZ^2}wt^{KSW}_{\sigma}(x) f(x)-\frac{1}{4}f(0,0).
$$
\end{proof}

\subsection{Asymptotic expansion}


For $\lambda=e^{2\pi i \frac{j}{N}}$ an $N$-th root of $1$ and $k\ge 2$,
from the periodicity one sees that $Q_{k,\lambda}(x)$ is a bounded continuous function.
Since we are assuming that $f$ decays rapidly toward $x,y\to +\infty$,
we have
$$
\int_{0}^{\infty}\int_{0}^{\infty}\partial _{x_1}^i\partial _{x_2}^j 
f(\alpha_1x_1+\alpha_2 x_2)dx_1dx_2
$$
is bounded for any $i,j\ge 0$.

Thus the summation of the values of $f(tx)$ over $x$ running over lattice points of $\sigma$ is expressed
in series occurring in Euler-MacLaurin formula involving the $L$-series.

\begin{thm}\label{assy}
Under the same assumption on $\sigma$ as in Thm.\ref{ltod}, we have the asymptotic expansion
for $t\in\FR^+$ and $t\rightarrow0$
$$
\sum_{x\in \sigma\cap\FZ^2}wt_{\sigma}(x) f(tx)\sim 
\big(L_{\check{\sigma}}(\partial_{h_1},\partial_{h_2}) - \frac{|\Gamma_\sigma|}4 \partial_{h_1} \partial_{h_2} \big)
\circ\int_{\sigma(h)}f(tx) dx \Big{|}_{h=0}.
$$
\end{thm}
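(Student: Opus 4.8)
The plan is to apply Corollary~\ref{wt2} to the rescaled function $f_t(x):=f(tx)$ for each truncation level $k$, and then to read off the asymptotic expansion in $t$ by separating the main term from the remainder. For every fixed $t>0$ the function $f_t$ is again smooth, integrable and rapidly decaying toward the infinity of $\sigma$ (the latter because $f$ is), so the hypotheses of Thm.~\ref{ltod} and hence of Cor.~\ref{wt2} are met, and the left-hand sum converges absolutely. Writing $I(h,t):=\int_{\sigma(h)}f(tx)\,dx$, Corollary~\ref{wt2} gives, for each $k\ge 2$,
\begin{equation*}
\sum_{x\in\sigma\cap\FZ^2}wt_{\sigma}(x)f(tx)=P_k(t)+R_k^{\sigma}(f_t),
\end{equation*}
where
\begin{equation*}
P_k(t):=\Big[\sum_{\gamma\in\Gamma_{\check\sigma}}\prod_{i=1}^2 L^{k,\lambda_{\gamma,i}}(\partial_{h_i})-\frac{|\Gamma_{\check\sigma}|}4\partial_{h_1}\partial_{h_2}\Big]I(h,t)\Big|_{h=0}.
\end{equation*}
It remains to identify the coefficients of $P_k(t)$ with those of the asserted series and to show that $R_k^{\sigma}(f_t)$ is negligible as $t\to 0^+$; recall $|\Gamma_{\check\sigma}|=|\Gamma_\sigma|$, since $M_\sigma=M_{\check\sigma}$.

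The main term is controlled by a simple scaling. Substituting $y=tx$ and using $x\in\sigma(h)\Leftrightarrow y\in\sigma(th)$ gives $I(h,t)=t^{-2}g(th)$ with $g(h):=\int_{\sigma(h)}f(y)\,dy$, whence
\begin{equation*}
\partial_{h_1}^{a}\partial_{h_2}^{b}I(h,t)\big|_{h=0}=t^{\,a+b-2}\,(\partial_1^{a}\partial_2^{b}g)(0).
\end{equation*}
Now $L^{k,\lambda}(S)$ has a nonzero constant term (equal to $1$) exactly when $\lambda=1$, and otherwise begins in degree $1$; hence the operator defining $P_k(t)$ is a linear combination of monomials $\partial_{h_1}^{a}\partial_{h_2}^{b}$ with $0\le a,b\le k$, the term $a=b=0$ arising only from $\gamma=0$. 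By the displayed scaling $P_k(t)$ is therefore a Laurent polynomial in $t$ of lowest order $t^{-2}$ (the volume term, from the constant term of $L^{k,1}$ in the class $\gamma=0$), and the correction $-\tfrac{|\Gamma_\sigma|}4\partial_{h_1}\partial_{h_2}$ affects only the coefficient of $t^{0}$. Writing $\sum_{j\ge-2}c_j t^{j}$ for the formal series obtained by applying the full (untruncated) operator $L_{\check\sigma}(\partial_{h_1},\partial_{h_2})-\tfrac{|\Gamma_\sigma|}4\partial_{h_1}\partial_{h_2}$ to $I(h,t)$ via the same scaling, one checks that the coefficient of $t^{j}$ in $P_k(t)$ equals $c_j$ for every $j\le k-2$: for such $j$ all pairs $(a,b)$ with $a+b=j+2$ satisfy $a,b\le k$, so truncation loses nothing. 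Thus $P_k(t)=\sum_{j=-2}^{k-2}c_j t^{j}+O(t^{k-1})$.

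The main obstacle is the remainder estimate, and it is here that the rapid decay of $f$ is essential. Each of the three terms of $R_k^{\sigma}(f_t)$ is an integral over $(0,\infty)^2$ of a product of the periodic functions $Q_{k,\lambda_{\gamma,i}}$ (bounded and continuous for $k\ge 2$) with a derivative of $f_t$ along $\alpha_1,\alpha_2$: the first term carries $\partial_{x_1}^k\partial_{x_2}^k f_t$, while the second and third carry $\partial_{x_i}^k$ composed with $L^{k,\lambda}(-\partial_{x_j})$, i.e.\ at least $k$ directional derivatives in total. Since $f_t(x_1\alpha_1+x_2\alpha_2)=f\big(t(x_1\alpha_1+x_2\alpha_2)\big)$, each directional derivative produces a factor $t$, so every term carries a factor $t^{\,m}$ with $m\ge k$. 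Performing the substitution $y_i=tx_i$ (which contributes $t^{-2}$ and leaves the periodic factors bounded) and using that the directional derivatives of $f$ are integrable over $\sigma$, each term is bounded by a constant times $t^{m-2}$, giving
\begin{equation*}
R_k^{\sigma}(f_t)=O(t^{k-2})\qquad(t\to 0^+).
\end{equation*}

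Combining the two estimates yields, for every $k$, the relation $\sum_{x}wt_{\sigma}(x)f(tx)=\sum_{j=-2}^{k-2}c_j t^{j}+O(t^{k-2})$. Hence, for any prescribed order $N$, taking $k$ large enough shows that $\sum_{x}wt_{\sigma}(x)f(tx)$ agrees with $\sum_{j=-2}^{N}c_j t^{j}$ up to $O(t^{N+1})$; that is, the left-hand side admits $\sum_{j\ge -2}c_j t^{j}$ as its asymptotic expansion as $t\to 0^+$. By construction this series is precisely $\big(L_{\check{\sigma}}(\partial_{h_1},\partial_{h_2})-\tfrac{|\Gamma_\sigma|}4\partial_{h_1}\partial_{h_2}\big)\int_{\sigma(h)}f(tx)\,dx\big|_{h=0}$, read as a formal power series in $t$ through the scaling identity $I(h,t)=t^{-2}g(th)$, which is the desired expansion.
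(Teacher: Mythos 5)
Your proposal is correct and follows essentially the same route as the paper's proof: apply Corollary \ref{wt2} to $f(tx)$, use the scaling identity $\int_{\sigma(h)}f(tx)\,dx=t^{-2}\int_{\sigma(th)}f(x)\,dx$ to match the truncated operator with the formal series through the relevant orders, and bound the remainder after the substitution $y=tx$ using the boundedness of the $Q_{k,\lambda}$ and the rapid decay of $f$. Your remainder bound $O(t^{k-2})$ is marginally weaker than the paper's stated $O(t^{k-1})$ because you (correctly) allow for the constant term of $L^{k,1}$, but since $k$ is arbitrary this has no effect on the conclusion.
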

\begin{proof}
We note that
$$\int_{\sigma(h)}f(tx)dx=t^{-2}\int_{\sigma(th)}f(x)dx.$$
Applying Thm. \ref{ltod},  for $t\in \FR^{+}$, we have
$$
\sum_{x\in \sigma\cap\FZ^2}
wt_{\sigma}(x) f(tx)=\sum_{\gamma\in \Gamma_{\check{\sigma}}}\Big{(}t^{-2}\prod_{i=1}^2L^{k,\lambda_{\gamma,i}}(t\frac{\partial}{\partial h_i})-\frac{1}{4}\partial_{h_1}\partial_{h_2}\Big{)}\int_{\sigma(h)}f(x) dx
 \Big|_{h=0}+R_k^{\sigma}(f)(t),
 $$
with
\begin{equation*}
\begin{split}
&R_k^{\sigma}(f)(t)\\
=&\frac{1}{|\Gamma|}\sum_{\gamma\in\Gamma}\int_{0}^{\infty}\int_{0}^{\infty}Q_{k,\lambda_{\gamma,2}}(x_2)Q_{k,\lambda_{\gamma,1}}(x_1)\partial_{x_2}^k\partial_{x_1}^kf(\alpha_1tx_1+\alpha_2tx_2)dx_1dx_2\\
&+\frac{1}{|\Gamma|}\sum_{\gamma\in\Gamma}(-1)^{k-1}\int_{0}^{\infty}\int_{0}^{\infty}Q_{k,\lambda_{\gamma,2}}(x_2)\partial_{x_2}^kL^{k,\lambda_{\gamma,1}}(-\partial_{x_1})f(\alpha_1tx_1+\alpha_2tx_2)dx_1dx_2\\
&+\frac{1}{|\Gamma|}\sum_{\gamma\in\Gamma}(-1)^{k-1}\int_{0}^{\infty}\int_{0}^{\infty}Q_{k,\lambda_{\gamma,1}}(x_1)L^{k,\lambda_{\gamma,2}}(-\partial_{x_2})\partial_{x_1}^kf(\alpha_1tx_1+\alpha_2tx_2)dx_1dx_2,
\end{split}
\end{equation*}
for $\Gamma=\Gamma_{\check{\sigma}}$.

Let us change the variables: $y_i=tx_i$ for $i=1,2$.
Then
from the boundedness of $Q_{k,\lambda_{\gamma,i}}$, as $f(x,y)$ decays rapidly, 
one can easily see for example 
a summand of $R^\sigma_k(f)(t)$ in the second line belongs to $O(t^{k-1})$ at $0$:
\begin{equation*}
\begin{split}
&\int_{0}^{\infty}\int_{0}^{\infty}Q_{k,\lambda_{\gamma,2}}(x_2)\partial_{x_2}^kL^{k,\lambda_{\gamma,1}}(-\partial_{x_1})f(\alpha_1tx_1+\alpha_2tx_2)dx_1dx_2\\
&=t^{k-2}\int_{0}^{\infty}\int_{0}^{\infty}Q_{k,\lambda_{\gamma,2}}(\frac{y_2}{t})\partial_{y_2}^kL^{k,\lambda_{\gamma,1}}(-t\partial_{y_1})f(\alpha_1y_1+\alpha_2y_2)dy_1dy_2\\
&=O(t^{k-1})
\end{split}
\end{equation*}
Similarly, one can show the rest belongs to $O(t^{k-1})$. Thus we conclude that
$$
R_k^{\sigma}(f)(t)=O(t^{k-1}).
$$
\end{proof}

The asymptotic expansion in theorem \ref{assy} can be rewritten using todd power series from lemma \ref{tod}.
\begin{thm}\label{Asy}
 For $t\in\FR^+$  and
$t\rightarrow 0$, we have the asymptotic expansion
$$
\sum_{x\in \sigma\cap\FZ^2}wt_{\sigma}(x) f(tx)\sim
\Big{(}\td_{\check{\sigma}}^{\text{even}}(\partial_{h_1},\partial_{h_2})-\frac{q}{2}\partial_{h_1}\partial_{h_2}\Big{)}\circ \int_{\sigma(h)}f(tx) dx \Big{|}_{h=0}.
$$
\end{thm}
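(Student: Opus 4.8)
The plan is to obtain Theorem~\ref{Asy} as a direct translation of Theorem~\ref{assy}, simply rewriting the $L$-series side in terms of the Todd series via Lemma~\ref{tod}. The work in Theorem~\ref{assy} already establishes the asymptotic expansion with the operator $L_{\check\sigma}(\partial_{h_1},\partial_{h_2}) - \frac{|\Gamma_\sigma|}{4}\partial_{h_1}\partial_{h_2}$ acting on the integral, so no new analysis of remainder terms is needed here; the remainder $R_k^\sigma(f)(t) = O(t^{k-1})$ has already been controlled, and letting $k \to \infty$ justifies the asymptotic (rather than exact) expansion.

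First I would invoke Lemma~\ref{tod}, which states
$$
L_{\check\sigma}(x_1,x_2) = \td_{\check\sigma}(x_1,x_2)^{\text{even}} - \frac{|\Gamma_{\check\sigma}|}{4}x_1 x_2.
$$
Here one must be careful that the lemma is applied to the \emph{dual} cone $\check\sigma$ rather than to $\sigma$: in Theorem~\ref{assy} the relevant operator is $L_{\check\sigma}$, built from the lattice $N_{\check\sigma}$ and the characters $\lambda_{\gamma,i}$ indexed by $\gamma \in \Gamma_{\check\sigma}$, so Lemma~\ref{tod} should be read with $\sigma$ there replaced by $\check\sigma$. Substituting this identity for $L_{\check\sigma}$ into the expansion of Theorem~\ref{assy} gives
$$
L_{\check\sigma}(\partial_{h_1},\partial_{h_2}) - \frac{|\Gamma_\sigma|}{4}\partial_{h_1}\partial_{h_2}
= \td_{\check\sigma}^{\text{even}}(\partial_{h_1},\partial_{h_2}) - \frac{|\Gamma_{\check\sigma}|}{4}\partial_{h_1}\partial_{h_2} - \frac{|\Gamma_\sigma|}{4}\partial_{h_1}\partial_{h_2}.
$$

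The one point that needs verification is the bookkeeping of the index-factors so that the two $\tfrac14$-corrections combine into the single stated coefficient $\tfrac{q}{2}$. Since $M_{\check\sigma} = M_\sigma$ (as noted in the discussion of the dual cone), the two quotient groups $\Gamma_\sigma = M/M_\sigma$ and $\Gamma_{\check\sigma}$ have the same order, namely $|\Gamma_\sigma| = |\Gamma_{\check\sigma}| = |\det A_\sigma| = q$ for the cone $\sigma$ generated by $(0,1)$ and $(q,p)$. Hence the two corrections add to $-\frac{q}{4}\partial_{h_1}\partial_{h_2} - \frac{q}{4}\partial_{h_1}\partial_{h_2} = -\frac{q}{2}\partial_{h_1}\partial_{h_2}$, which is exactly the coefficient appearing in the statement. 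I expect the only genuine (if minor) obstacle to be this identification $|\Gamma_\sigma| = |\Gamma_{\check\sigma}| = q$ together with keeping the $\sigma$ versus $\check\sigma$ labelling consistent when transporting Lemma~\ref{tod} to the dual cone; once that is settled, the result follows by direct substitution and the proof is complete.
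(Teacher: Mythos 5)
Your proof is correct and is exactly the paper's (implicit) argument: Theorem~\ref{Asy} is obtained from Theorem~\ref{assy} by substituting Lemma~\ref{tod} applied to the dual cone $\check{\sigma}$, and the two $\tfrac{1}{4}$-corrections combine into $\tfrac{q}{2}$ because $|\Gamma_\sigma|=|\Gamma_{\check{\sigma}}|=q$. One small caveat: $M_\sigma$ and $M_{\check{\sigma}}$ are in general \emph{different} sublattices of $M$ (the paper only says the two definitions are parallel, not that the lattices coincide), but they have the same index $q=|\det A_\sigma|=|\det A_{\check{\sigma}}|$ since the dual cone's primitive generators are rotations of the original ones, so your conclusion stands.
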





\subsection{Evaluation of zeta values}
 
Now we apply this to the exponential series associated to the partial zeta function of an ideal. We saw that a partial zeta function can
be identified with a zeta function of a cone $\sigma$ w.r.t. a quadratic form $Q$.
We are going to apply Zagier's theorem  to evaluate the special values of $\zeta(\fb,s)$ at non positive integers.
We apply Thm. \ref{Asy} to obtain the asymptotic expansion of the exponential series. 
We take $f(tx) = e^{-Q(t^{1/2}x_1, t^{1/2}x_2)} = e^{-Q(x_1,x_2)t}$.
Then we obtain the following asymptotic expansion:
\begin{equation*}
\begin{split}
&\sum_{l\in\sigma\cap M} wt_{\sigma}e^{-Q(l)t}\sim\\
&\Big{\{}(\td_{\check{\sigma}}^{even}(\partial_{h_1},\partial_{h_2})-\frac{q}{2}\partial_{h_1}\partial_{h_2}\Big{\}}\circ\int_{\sigma(h_1,h_2)}e^{-Q(x_1,x_2)t}dx_1dx_2\Big|_{(h_1,h_2)=0}
\end{split}
\end{equation*}

\begin{thm}[Garoufalidis-Pommersheim\cite{Pom}]\label{Garou-Pommer}
For $n\geq0,$ we have
\begin{equation*}
\begin{split}
&\zeta(\fb,-n)=\\
&(-1)^n n!\Big{\{}\td_{\check{\sigma}}^{(2n+2)}(\partial_{h_1},\partial_{h_2})-\delta_{n,0}\frac{q}{2}\partial_{h_1}\partial_{h_2}\Big{\}}\circ\int_{\sigma(h_1,h_2)}e^{-Q(x_1,x_2)}dx_1dx_2\Big|_{(h_1,h_2)=0}
\end{split}
\end{equation*}
where $$\delta_{n,0}=\begin{cases}1 &\text{if  $n=0$,}\\0&\text{ otherwise}\end{cases}$$
\end{thm}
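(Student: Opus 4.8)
The plan is to read off $\zeta(\fb,-n)$ from the coefficient of $t^n$ in the asymptotic expansion of the exponential sum $E(t)=\sum_{\ell}wt_{\sigma}(\ell)e^{-Q(\ell)t}$. By Lemma \ref{zeta} we have $\zeta(\fb,-n)=\zeta_Q(-n,\sigma)$, and Zagier's recipe (Prop.~2 of \cite{zagier}) gives $\zeta_Q(-n,\sigma)=(-1)^n n!\,c_n$, where $c_n$ is the coefficient of $t^n$ in $E(t)\sim\sum_{i\geq-1}c_i t^i$. So the whole task reduces to extracting $c_n$ from the expansion already established in the Evaluation section as a consequence of Theorem \ref{Asy}:
$$E(t)\sim\Big(\td_{\check\sigma}^{even}(\partial_{h_1},\partial_{h_2})-\tfrac{q}{2}\partial_{h_1}\partial_{h_2}\Big)\int_{\sigma(h)}e^{-Q(x)t}\,dx\Big|_{h=0}.$$

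First I would exploit the homogeneity of $Q$. Since $Q$ is homogeneous of degree $2$, the substitution $y=t^{1/2}x$, which carries $\sigma(h)$ onto $\sigma(t^{1/2}h)$ with Jacobian $t^{-1}$, gives
$$\int_{\sigma(h)}e^{-Q(x)t}\,dx=t^{-1}\int_{\sigma(t^{1/2}h)}e^{-Q(y)}\,dy,$$
which concentrates all of the $t$-dependence into the boundary data $t^{1/2}h$ of the $t$-free integral $J(h):=\int_{\sigma(h)}e^{-Q(y)}\,dy$. Next I would decompose the even Todd operator into its homogeneous components $\td_{\check\sigma}^{even}=\sum_{m\geq0}\td_{\check\sigma}^{(2m)}$, where $\td_{\check\sigma}^{(2m)}$ has total degree $2m$ in the derivatives. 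By the chain rule, each $\partial_{h_i}$ acting on $J(t^{1/2}h)$ produces a factor $t^{1/2}$, so a homogeneous operator of degree $d$ produces $t^{d/2}$; hence the degree-$2m$ piece contributes to $E(t)$ the term
$$t^{-1}\cdot t^{m}\,\td_{\check\sigma}^{(2m)}(\partial_{h_1},\partial_{h_2})J(h)\Big|_{h=0}=t^{m-1}\,\td_{\check\sigma}^{(2m)}(\partial_{h_1},\partial_{h_2})J(h)\Big|_{h=0}.$$

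Collecting powers of $t$ then reads off $c_n$ directly: the degree-$2m$ piece lands at order $t^{m-1}$, so the contribution at $t^n$ comes precisely from the homogeneous piece of degree $2m=2n+2$, while the correction $-\tfrac{q}{2}\partial_{h_1}\partial_{h_2}$, being homogeneous of degree $2$, contributes only at order $t^0$, i.e. only when $n=0$. This yields
$$c_n=\Big(\td_{\check\sigma}^{(2n+2)}(\partial_{h_1},\partial_{h_2})-\delta_{n,0}\tfrac{q}{2}\partial_{h_1}\partial_{h_2}\Big)\int_{\sigma(h)}e^{-Q(x)}\,dx\Big|_{h=0},$$
and multiplying by $(-1)^n n!$ as prescribed by Zagier's formula produces the asserted identity. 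The main technical point — and the step I expect to require the most care — is the homogeneity bookkeeping: one must verify that only integer powers of $t$ occur, which is exactly why the \emph{even} part of the Todd series is the relevant operator, since each homogeneous degree is then even and the exponent $m-1$ is an integer, so that the expansion genuinely matches the integer-power asymptotic series demanded by Zagier's proposition. One must also justify differentiating under the integral sign and the boundary variation of $\sigma(h)$, which is supplied by the smoothness and rapid-decay hypotheses inherited from Theorem \ref{ltod}.
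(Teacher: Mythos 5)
Your proposal is correct and follows essentially the same route as the paper: the paper also applies Theorem \ref{Asy} to $f(tx)=e^{-Q(x)t}$ (via the $t^{1/2}$ rescaling), and reads off $\zeta(\fb,-n)=(-1)^n n!\,c_n$ from Zagier's Proposition 2, with the homogeneity count placing the degree-$(2n+2)$ part of $\td_{\check\sigma}$ at order $t^n$ and the degree-$2$ correction $-\tfrac{q}{2}\partial_{h_1}\partial_{h_2}$ only at $n=0$. Your write-up just makes explicit the power-of-$t$ bookkeeping that the paper leaves implicit.
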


\begin{remark}
In \cite{Pom}, they used the exact Euler-Maclaurin formula of Brion-Vergne(\cite{Brion}) to
a cone over a polytope.
We apply the version with remainder term due to Karshon-Sternberg-Weitsman in \cite{ka}. 
Since this is a
formula with remainder, we have a direct method to evaluate the zeta values at nonpositive
integers.
\end{remark}

\section{Additivity of Todd series and cone decomposition}\label{Additivity_Todd}
In this section, we recall some technic used in \cite{Pom} concerning  additive 
decomposition of the Todd series w.r.t. cone decomposition.

Todd series  does not allow decomposition in its original shape until
we normalize. 
The {\it normalized todd power series} $S_{\sigma}$ for a
cone $\sigma$ is defined as follows:
$$
S_{\sigma}(x_1,x_2)=\frac{1}{\det (A_\sigma)x_1x_2}\td_{\sigma}(x_1,x_2).
$$
One should note that different choice of the orientation of the same
underlying cone yields the opposite sign in the normalized Todd
series and interchanges the two variables. This is contrary to the original Todd series case, where
the similarity class is determined by the sign.

Let  $v_i\in \FR^2$ for $i=1,2,3$ be pairwise linearly independent
primitive lattice vectors in a half-plane.
An ordered pair $(v_i,v_j)$ for $i\ne j$ determines a lattice
cone $\sigma_{ij}= \sigma_{ij}(v_i,v_j)$ with orientation.

In this case, we write formally
$$
\sigma_{ij} + \sigma_{jk}= \sigma_{ik}.
$$
Then we have the following:
\begin{thm}[Garoufalidis-Pommersheim \cite{Pom}] \label{add}
For $i=1,\cdots,r+1$, let $v_i$  be pairwise linearly independent lattice points in a half plane
of $\FR^2$. We define cones
$$
\sigma_i:=\sigma_i(v_i,v_{i+1}),\,\, \sigma:=\sigma(v_1,v_{r+1})
$$
Thus $$\sigma=\sigma_1+\sigma_2+ \cdots +\sigma_r.$$ 
Then
$$
S_{\sigma}(x_1,x_2)=\sum_{i=1}^{r}
S_{\sigma_i}(A_{\sigma_{i}}^{-1}A_\sigma (x_1,x_2)^{t}).
$$
In particular, if every  $\sigma_i$ is nonsingular(i.e. $\det(A_{\sigma_i}) =\pm1$) for
$i=1,2,\ldots,r$,
$$
S_{\sigma}(x_1,x_2)=\sum_{i=1}^{r}\det (A_{\sigma_i})
F(A_{\sigma_{i}}^{-1}A_{\sigma }(x_1,x_2)^{t}),
$$
where $F(x_1,x_2)=\frac{1}{1-e^{-x_1}}\frac{1}{1-e^{-x_2}}.$
\end{thm}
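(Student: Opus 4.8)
The plan is to prove Theorem \ref{add} by first reinterpreting the normalized Todd series $S_\sigma$ as an honest exponential lattice sum over the \emph{dual} cone, and then recognizing the asserted identity as a signed additivity (a valuation relation) among these dual sums. This is exactly the ``M-additivity versus N-additivity'' phenomenon flagged in the introduction: although the $\sigma_i$ subdivide $\sigma$ on the primal side, the change of variables $A_{\sigma_i}^{-1}A_\sigma$ transports everything to the dual cones, where the pieces do \emph{not} subdivide but satisfy an inclusion–exclusion relation whose discrepancy is a half-plane that will be forced to vanish.

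First I would compute a closed form for $S_\sigma$ on a single oriented cone $\sigma=\sigma(v_1,v_2)$. Starting from $\det(A_\sigma)\,S_\sigma(x_1,x_2)=\sum_{g\in\Gamma_\sigma}\frac{1}{1-\chi_{\sigma,1}(g)e^{-x_1}}\cdot\frac{1}{1-\chi_{\sigma,2}(g)e^{-x_2}}$, I expand each factor as $\frac{1}{1-\lambda e^{-x_i}}=\sum_{n\ge0}\lambda^ne^{-nx_i}$ (valid for $\Re x_i>0$) and interchange the sums; the inner sum $\sum_{g\in\Gamma_\sigma}\chi_{\sigma,1}(g)^{n_1}\chi_{\sigma,2}(g)^{n_2}$ equals $|\Gamma_\sigma|$ if the character $\chi_{\sigma,1}^{n_1}\chi_{\sigma,2}^{n_2}$ is trivial and $0$ otherwise. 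Unwinding the definition of the $\chi_{\sigma,i}$, triviality is exactly the condition $n_1\alpha_1+n_2\alpha_2\in N$, where $\alpha_1,\alpha_2$ is the basis dual to $v_1,v_2$. Reindexing by $m=n_1\alpha_1+n_2\alpha_2$, so that $n_i=\langle m,v_i\rangle$, gives the key identification
$$S_\sigma(x_1,x_2)=\operatorname{sign}(\det A_\sigma)\!\!\sum_{\substack{m\in N\\ \langle m,v_1\rangle,\langle m,v_2\rangle\ge0}}\!\!e^{-\langle m,\,x_1v_1+x_2v_2\rangle};$$
up to sign, $S_\sigma$ is the generating function of lattice points of the closed dual cone $\check\sigma$, with the exponent read off from the \emph{primal} generators.

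The payoff is that this trivializes the change of variables. Since $A_{\sigma_i}$ has columns $v_i,v_{i+1}$ and $A_\sigma$ has columns $v_1,v_{r+1}$, one has $A_{\sigma_i}\bigl(A_{\sigma_i}^{-1}A_\sigma(x_1,x_2)^t\bigr)=A_\sigma(x_1,x_2)^t=x_1v_1+x_2v_{r+1}=:X$, so after substitution every term $S_{\sigma_i}\bigl(A_{\sigma_i}^{-1}A_\sigma(x_1,x_2)^t\bigr)$ acquires the \emph{same} exponent $\langle m,X\rangle$. With the $v_i$ in angular order (as the half-plane hypothesis and the indexing ensure) all of $\det(v_i,v_{i+1})$ and $\det(v_1,v_{r+1})$ share one sign, so the theorem reduces to the purely combinatorial dual identity $\sum_{m\in\check\sigma\cap N}e^{-\langle m,X\rangle}=\sum_{i=1}^r\sum_{m\in\check{\sigma}_i\cap N}e^{-\langle m,X\rangle}$. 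I would prove this by induction on $r$, telescoping through $\sigma(v_1,v_{j+1})=\sigma(v_1,v_j)+\sigma(v_j,v_{j+1})$, which reduces everything to one subdivision of a triple $a,b,c$ with $b$ between $a$ and $c$. There the two geometric inputs are: $\check{\overline{\sigma(a,b)}}\cap\check{\overline{\sigma(b,c)}}=\check{\overline{\sigma(a,c)}}$, since $b=\lambda a+\mu c$ with $\lambda,\mu>0$ makes $\langle m,b\rangle\ge0$ redundant once $\langle m,a\rangle,\langle m,c\rangle\ge0$; and $\check{\overline{\sigma(a,b)}}\cup\check{\overline{\sigma(b,c)}}=\{m:\langle m,b\rangle\ge0\}$, a half-plane, for the same reason. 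The indicator relation $\mathbb{1}_{C_1}+\mathbb{1}_{C_2}=\mathbb{1}_{C_1\cup C_2}+\mathbb{1}_{C_1\cap C_2}$ then yields the claim, \emph{provided} the generating function of the half-plane vanishes.

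That vanishing is the crux, and the main obstacle is making it rigorous, since the half-plane sum converges nowhere. I would invoke the standard valuation property of exponential sums over rational polyhedra (Brion--Lawrence--Khovanskii--Pukhlikov): there is a valuation assigning to each rational cone a well-defined rational function, respecting inclusion–exclusion, which vanishes on any cone containing a line. Concretely, factoring the half-plane as (its boundary line) $\times$ (a transverse ray), the line factor is the two-sided series $\sum_{a\in\FZ}z^a$, whose nonnegative part $\frac{1}{1-z}$ and strictly negative part $\frac{z^{-1}}{1-z^{-1}}=-\frac{1}{1-z}$ continue to a sum of $0$; hence the half-plane contributes $0$ and the dual identity follows. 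Finally the ``in particular'' clause is immediate: if $\sigma_i$ is nonsingular then $\Gamma_{\sigma_i}$ is trivial, so $\td_{\sigma_i}(x_1,x_2)=\frac{x_1}{1-e^{-x_1}}\frac{x_2}{1-e^{-x_2}}$ and thus $S_{\sigma_i}=\frac{1}{\det A_{\sigma_i}}F=\det(A_{\sigma_i})\,F$ because $\det A_{\sigma_i}=\pm1$; substituting into the main formula gives the stated expression. The remaining care is the orientation bookkeeping, ensuring the factors $\operatorname{sign}(\det A_{\sigma_i})$ indeed agree across the identity.
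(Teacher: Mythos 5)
Your strategy --- rewrite $S_\sigma$ as a lattice-point generating function of the closed dual cone, observe that the substitution $A_{\sigma_i}^{-1}A_\sigma$ equalizes all the exponents to $\langle m, X\rangle$ with $X=x_1v_1+x_2v_{r+1}$, and then run inclusion--exclusion on the dual cones with the half-plane discrepancy killed by the Lawrence--Khovanskii--Pukhlikov valuation --- is sound, and it is essentially the standard proof of this additivity. The paper itself gives no argument (the proof is ``See Thm.~2 in \cite{Pom2}''), so you are reconstructing Pommersheim's proof rather than diverging from the paper. Your closed form $S_\sigma(x_1,x_2)=\operatorname{sign}(\det A_\sigma)\sum_{m}e^{-\langle m,X\rangle}$ is correct: the character sum over $\Gamma_\sigma$ selects exactly the pairs $(n_1,n_2)$ with $n_1\alpha_1+n_2\alpha_2\in N$, and $|\Gamma_\sigma|/\det A_\sigma=\operatorname{sign}(\det A_\sigma)$. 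The ``in particular'' clause is also handled correctly.

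There is one concrete error, and it matters for this paper. You assert that the half-plane hypothesis and the indexing put the $v_i$ in angular order, so that all of $\det(v_i,v_{i+1})$ and $\det(v_1,v_{r+1})$ share one sign, and accordingly you only treat the subdivision of a triple $a,b,c$ with $b=\lambda a+\mu c$, $\lambda,\mu>0$. That premise is false in general and false in the application the theorem is invoked for: in Prop.~\ref{todd} the decomposition is \emph{virtual}, with $\det(A_{\sigma_i})=(-1)^{i-1}$ alternating, so consecutive cones overlap rather than tile. As written, your argument covers only honest convex subdivisions. The repair is short and does not change the method. Writing $\Sigma_{uv}$ for the (meromorphically continued) sum over the closed dual cone of $\sigma(u,v)$ --- which depends only on the unordered pair $\{u,v\}$ --- and $\epsilon_{uv}=\operatorname{sign}\det(u,v)=-\epsilon_{vu}$, each telescoping step requires
\begin{equation*}
\epsilon_{ab}\,\Sigma_{ab}+\epsilon_{bc}\,\Sigma_{bc}+\epsilon_{ca}\,\Sigma_{ca}=0 .
\end{equation*}
This statement changes by an overall sign under a transposition of $\{a,b,c\}$ and is invariant under cyclic permutation, so it is permutation-invariant as an identity. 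Since among three pairwise independent vectors in a half-plane exactly one lies in the open cone spanned by the other two, you may relabel so that it is $b$, which is precisely the convex configuration you proved. With that amendment (and the routine remark that the termwise geometric-series manipulations are identities of meromorphic functions, valid after continuation even where the substituted variables leave the domain of convergence), the proof is complete.
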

\begin{proof}
See Thm. 2 in \cite{Pom2}.
\end{proof}

\begin{remark} Abusing the notation, we denote $\sigma(v_2,v_1)$ by $-\sigma(v_1,v_2)$. 
Actually by definition of Todd power series of cone above, we easily find that 
$$\td_{\sigma}(x_1,x_2)=\td_{-\sigma}(x_2,x_1).$$

The matrix $A_\sigma^{-1}$ represents the linear transformation $v_1 \mapsto e_1$, $v_2 \mapsto e_2$. So we have $A_{-\sigma}^{-1}= \begin{pmatrix} 0 & 1 \\ 1 & 0 \end{pmatrix} A_\sigma$.
Let $A_\sigma^{-1} = \begin{pmatrix} w_1 \\ w_2 \end{pmatrix}$ for two row vectors $w_1$, $w_2$.
Then $A_{-\sigma}^{-1} = \begin{pmatrix} w_2 \\ w_1 \end{pmatrix}$.
Therefore,
\begin{equation}
\begin{split}
\td_{\sigma}(A_\sigma^{-1}(x_1,x_2)^t)&=\td_{\sigma}( \left<w_1, (x_1,x_2 )\right>, \left< w_2, (x_1,x_2) \right>) \\
&= \td_{-\sigma} ( \left< w_2, (x_1,x_2) \right>, \left<w_1, (x_1,x_2 )\right> )\\
&= \td_{-\sigma} (A_{-\sigma}^{-1} (x_1,x_2)^t). 
\end{split}
\end{equation}
Thus one can see easily that for  the  additivity theorem to hold the orientation of $\sigma$
does not make any problem.
\end{remark}

\section{Cone decomposition and Continued fraction}\label{cfraction_decomp}

In this section, we will decompose the cone $\sigma(\fb^{-1})$ into nonsingular cones.
This decomposition follows directly the decomposition of the fundamental cone in the 
totally positive quadrant of Minkowski space under the action of the totally positive unit group.  
This is fairly standard fact related to desingularization of a cusp of the Hilbert modular
surface of the real quadratic field considered. It is described in terms of the (minus) continued fraction
expansion of the reduced basis of $\fb^{-1}$ so that the desingularization of the lattice cone $\sigma(\fb^{-1})$ in the sense of toric geometry follows(cf. \cite{Fulton}, \cite{van}). 
We are going to apply Thm. \ref{add} to obtain explicit formula of the zeta values using the terms of the positive continued fraction. One should note that our expression is differed from \cite{Pom} in that we use
the positive continued fraction instead of the negative one.

In general, there are many other decompositions possible for a singular cone.
But those lattice cones arising from a singular cone but for a totally real field and the action of the totally positive units has a decomposition after the shape of its Klein polyhedron which is a geometric realization of 
a continued fraction. In 2 dimension, this appears as follows: 
In each quadrant of $K_\FR$, we take the convex hull of
$\fb^{-1}=[1,\omega]$ and union the polygonal hulls. This is
the \textit{Klein polyhedra} of the ideal lattice
$\fb^{-1}$. One can further assume $\omega$ is a reduced basis:$\omega>1$, $-1<\omega'<0$ for
$\omega\in K$. Then $\omega$ has purely periodic positive
continued fraction expansion
$$
\omega=[[a_0,a_{1},\cdots,a_{r-1}]].
$$

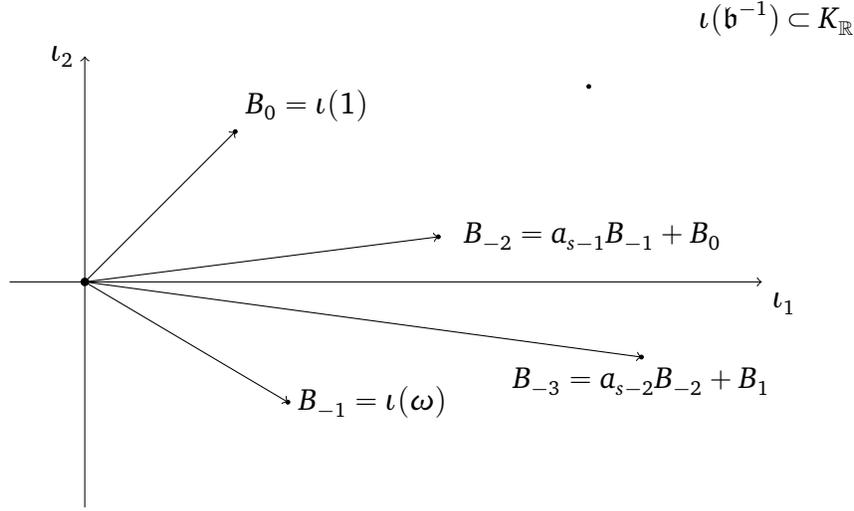
\begin{figure}\label{c-frac}
\begin{tikzpicture}
\draw [->] (-1,0) -- (9,0);
\node [below right] at (9,0) {$\iota_1$};
\node [left] at (0,3) {$\iota_2$};
\draw [->] (0,-3) -- (0,3);
\draw [fill] (2,2) circle [radius=0.025];
\draw [fill] (0,0) circle [radius=0.05];
\draw [fill] (4,4) circle [radius=0.025];
\draw [->] (0,0) -- (2.7,-1.6);
\draw [->] (0,0) -- (2,2);
\draw [->] (0,0) -- (4.7,0.6);
\draw [->] (0,0) -- (7.4, -1) ;
\node [below] at (7.4,-1) {$B_{-3}= a_{s-2} B_{-2} + B_1$};
\draw [fill] (7.4,-1) circle [radius=0.025];
\node [above right] at (2,2) {$B_{0}=\iota(1)$};
\draw [fill] (2.7,-1.6) circle [radius=0.025];
\node [right] at (2.7,-1.6) {$B_{-1} = \iota(\omega)$};
\draw [fill] (4.7,0.6) circle [radius=0.025];
\node [right] at (4.9,0.6) {$B_{-2}= a_{s-1} B_{-1} + B_0$};
\draw [fill] (6.7,2.6) circle [radius=0.025];
\node at (9.2,3.5) {$\iota(\fb^{-1})\subset K_\FR$};
\end{tikzpicture}
\caption{$B_i$ and the continued fraction $[[a_0,a_1,\ldots,a_{r-1}]]$  }
\end{figure}

Let $\{B_{2i}\}$ (resp. $\{B_{2i+1}\}$) be the vertices of the convex hull of $\iota(\fb^{-1})$ in the 1st (resp. the 4th)  quadrant of $\FR^2$ 
with $B_0=\iota(1), B_{-1}=\iota(\omega)$ and $x(B_i)<x(B_{i-1})$, where $x(-)$ is taking the 1st coordinate. These $B_i$ arising as the vertices of the Klein polyhedron should not be confused with the Bernoulli numbers.

Let $\ell$ be the even period of the continued fraction expansion of $\omega$(ie. $
\ell= r$ (resp. $2r$) for even  $r$ (resp. for odd $r$).


$B_i$ satisfies a periodic recursive relation read from the continued fraction of $\omega$ (cf. \cite{van}):
\begin{equation}
B_{i-1} = a_i B_i + B_{i+1}
\end{equation} 
Since a successive pair $B_{i}, B_{i+1}$ is a basis of the lattice $\iota(\fb^{-1})$ in $K_\FR$, 
this yields a change of basis
$$
\begin{pmatrix}
B_{i-1} & B_{i} 
\end{pmatrix}
= 
\begin{pmatrix}
B_i & B_{i+1}
\end{pmatrix}
\begin{pmatrix}
a_i & 1 \\ 1 & 0
\end{pmatrix}
$$

After successive change of basis, we have
\begin{equation}\label{albe}
(B_{i-1} ~ B_{i}) = (B_{-1}~ B_0) 
\begin{pmatrix}
a_{\ell-1} & 1 \\ 1 & 0
\end{pmatrix}
\begin{pmatrix}
a_{\ell-2} & 1 \\ 1 & 0
\end{pmatrix}\cdots\begin{pmatrix}
a_{\ell-i} & 1 \\ 1 & 0
\end{pmatrix}
\end{equation}

Let $\alpha_i, \beta_i$ be the coordinate of $B_{i-1}$ w.r.t. the basis $\{B_{-1}, B_0\}$:
$$
B_{i-1} = \alpha_i B_{-1} + \beta_i B_{0}
$$

Note that the coulumn vector $(\alpha_i , \beta_i)^t$ is equal to the 1st column of the matrix
$\begin{pmatrix}
a_{\ell-1} & 1 \\ 1 & 0
\end{pmatrix}
\begin{pmatrix}
a_{\ell-2} & 1 \\ 1 & 0
\end{pmatrix}\cdots\begin{pmatrix}
a_{\ell-i} & 1 \\ 1 & 0
\end{pmatrix}
$ in \eqref{albe}.

As $B_i$ are primitive, so is $(\alpha_i,\beta_i)$
in $M=\FZ^2$.







In the following, the totally positive fundamental lemma is identified:
\begin{lem}\label{convex}
Let $\epsilon$ be the totally positive fundamental unit of $K$. Then
$$\iota(\epsilon)=
B_{-\ell}=\alpha_{\ell-1}B_{-1}+\beta_{\ell-1}B_0.$$
\end{lem}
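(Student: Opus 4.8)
The plan is to reduce the statement to the single identity $\iota(\epsilon) = B_{-\ell}$, since everything else is bookkeeping. First I would dispose of the second equality $B_{-\ell} = \alpha_{\ell-1}B_{-1} + \beta_{\ell-1}B_0$: it is a direct consequence of the recursion $B_{i-1} = a_iB_i + B_{i+1}$. Starting from the pair $(B_{-1}, B_0) = (\iota(\omega), \iota(1))$ and iterating the associated change of basis \eqref{albe} until one reaches $B_{-\ell}$, the coordinates of $B_{-\ell}$ in the basis $\{B_{-1}, B_0\}$ are precisely the first column of the matrix product $\begin{pmatrix} a_{\ell-1} & 1 \\ 1 & 0\end{pmatrix}\cdots\begin{pmatrix} a_1 & 1 \\ 1 & 0\end{pmatrix}$, which by definition is $(\alpha_{\ell-1}, \beta_{\ell-1})^t$. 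Moreover, since the $E^+$-action on $K_\FR$ is coordinatewise multiplication, $\epsilon\cdot B_0 = \epsilon\cdot\iota(1) = \iota(\epsilon)$; so by injectivity of $\iota$ it suffices to prove the geometric fact $\epsilon\cdot B_0 = B_{-\ell}$.

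Next I would argue that $\epsilon$ shifts the whole vertex sequence rigidly. Normalize $\epsilon > 1$, so that $0 < \epsilon' < 1$. Coordinatewise multiplication by $\iota(\epsilon) = (\epsilon, \epsilon')$ is a $\FZ$-linear automorphism of the lattice $\iota(\fb^{-1})$ that preserves each coordinate quadrant of $K_\FR$, hence carries the Klein polyhedron of $\iota(\fb^{-1})$ onto itself and permutes its vertices while preserving their order by first coordinate. Therefore $\epsilon\cdot B_i = B_{i-c}$ for one fixed integer $c$ independent of $i$. Because multiplication by $\iota(\epsilon)$ strictly increases the first coordinate while $x(B_i) < x(B_{i-1})$, the shift runs toward more negative indices, i.e. $c \geq 1$.

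The heart of the matter is to show $c = \ell$, and this is the step I expect to be the main obstacle, since it is where the arithmetic of the unit group meets the combinatorial definition of $\ell$. Here I would invoke the standard dictionary (cf. \cite{van}) between the purely periodic continued fraction of $\omega$ and the units of the order: the product of the matrices $\begin{pmatrix} a_i & 1 \\ 1 & 0\end{pmatrix}$ over $c$ consecutive steps is exactly the matrix of multiplication by the realizing unit in the basis $(\iota(\omega), \iota(1))$, so its determinant equals the norm of that unit. Each factor has determinant $-1$, whence the determinant over $c$ steps is $(-1)^c$; since $\epsilon$ is totally positive, $N(\epsilon) = +1$, forcing $c$ to be even. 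Minimality of the fundamental totally positive unit then pins $c$ to the least period over which the continued-fraction matrix product has norm $+1$: when $r$ is even the fundamental unit already has norm $+1$ and $c = r = \ell$, while when $r$ is odd the fundamental unit has norm $-1$ and must be squared, giving $c = 2r = \ell$. In both cases $c$ is the even period $\ell$.

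Finally, combining $\epsilon\cdot B_0 = B_{-c} = B_{-\ell}$ with the first paragraph yields $\iota(\epsilon) = B_{-\ell} = \alpha_{\ell-1}\iota(\omega) + \beta_{\ell-1}\iota(1)$, and injectivity of $\iota$ gives the asserted equality in $K$, completing the proof.
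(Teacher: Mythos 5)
Your proof is correct and follows essentially the same route as the paper: the coordinate identity is definitional, and the key fact that the fundamental unit $\epsilon_K$ shifts the Klein-polyhedron vertices by the minimal period $r$ (i.e.\ $\iota(\epsilon_K)=B_{-r}$) is the same classical input the paper takes from van der Geer, after which both arguments pass to the totally positive unit by squaring when $N(\epsilon_K)=(-1)^r=-1$, giving the even period $\ell$. You simply make explicit the intermediate geometry (that a totally positive unit acts as an order-preserving, hence rigid, shift of the vertex sequence with sign of the determinant equal to the norm), which the paper leaves to the citation.
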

\begin{proof}
Let $\epsilon_K>1$ be the fundamental unit of $K$.
Then for the period $r$ of continued fraction expansion of  $\omega$, we have
$$\iota(\epsilon_K)=B_{-r}.$$
See p.40 of \cite{van} for detail.
Since the totally positive unit $\epsilon$ is either $\epsilon_K$ or $\epsilon_K^2$ according to the sign of $\iota_2(\epsilon_K)$, 
we then obtain that $\iota(\epsilon)=B_{-\ell}$.
%
\end{proof}

Recall that we associated 
a lattice cone $\sigma(\fb^{-1})$ in $\FR^2$ to $\fb^{-1}$ in Sec. \ref{partial_zeta_cone}.
\begin{equation}\label{cone}
\sigma(\fb^{-1}) :=\sigma((0,1),(\alpha_{\ell-1},\beta_{\ell-1})).
\end{equation}
This corresponds to the cone bounded by $\iota(1)$ and $\iota(\epsilon)$ in $K_{\FR}$.
For the rest of this section,  only the cone $\sigma(\fb^{-1})$ is need to consider  to compute the zeta values. So we will write simply $\sigma$ instead of $\sigma(\fb^{-1})$. 


\begin{lem}\label{dual}
Let $\sigma=\sigma((0,1),(\alpha,\beta))$  be a lattice cone where $\alpha, \beta$ are relatively prime positive integers.
Then $\check{\sigma}$ the dual cone of $\sigma$ is similar to
$$
\tau = \tau((0,-1),(\alpha,\beta)).
$$
\end{lem}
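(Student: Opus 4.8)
The plan is to compute $\check\sigma$ explicitly from its definition and then exhibit a single matrix in $GL_2(\FZ)$ realizing the similarity with $\tau$. Writing $\sigma=\sigma(v_1,v_2)$ with $v_1=(0,1)$ and $v_2=(\alpha,\beta)$, I would first unwind $\check\sigma=\{\,y\neq 0:\langle y,x\rangle\ge 0\text{ for all }x\in\sigma\,\}$, which for a two-dimensional cone is cut out by the two inequalities $\langle y,v_1\rangle\ge 0$ and $\langle y,v_2\rangle\ge 0$. Putting $y=(p,q)$ these become $q\ge 0$ and $\alpha p+\beta q\ge 0$.

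The two bounding rays are found by saturating one inequality at a time. Setting $q=0$ forces $p\ge 0$ because $\alpha>0$, giving the ray through $(1,0)$; setting $\alpha p+\beta q=0$ with $q\ge 0$ gives the ray through $(-\beta,\alpha)$. Here the hypothesis $\gcd(\alpha,\beta)=1$ is precisely what makes $(1,0)$ and $(-\beta,\alpha)$ the \emph{primitive} lattice generators of these rays, so that $\check\sigma=\check\sigma((1,0),(-\beta,\alpha))$ as a lattice cone.

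To identify the similarity, I would use the clockwise quarter-turn
$$
A=\begin{pmatrix} 0 & 1 \\ -1 & 0 \end{pmatrix}\in SL_2(\FZ),
$$
which sends $(1,0)\mapsto(0,-1)$ and $(-\beta,\alpha)\mapsto(\alpha,\beta)$. These images are exactly the ordered generators of $\tau=\tau((0,-1),(\alpha,\beta))$, so in matrix form $A\,A_{\check\sigma}=A_\tau$, which is the defining relation for $\check\sigma$ and $\tau$ to be similar. Since $A$ does not depend on $\alpha,\beta$ and lies in $GL_2(\FZ)$, this proves the claim.

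The one point that needs care is the orientation, i.e.\ the \emph{order} of the generators of $\check\sigma$, since similarity is a relation on ordered pairs and $\sigma(v_1,v_2)\neq\sigma(v_2,v_1)$ in general. I would fix the order of the dual generators consistently with the transpose-orientation convention for dual cones, and then check that with this order the quarter-turn above matches the stated order of the generators of $\tau$. That the ordering is forced rather than chosen can be seen from the alternative pairing, which would require the transition matrix to carry the non-integral entry $(\beta^2-1)/\alpha$; thus integrality of $A$ pins down the orientation. This is the only delicate step, the remainder being the direct computation above.
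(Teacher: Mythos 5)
Your proof is correct and takes essentially the same approach as the paper: the paper likewise identifies the primitive generators $(1,0)$ and $(-\beta,\alpha)$ of $\check{\sigma}$ and observes that the rotation by $-90$ degrees, an element of $SL_2(\FZ)$, carries them to the ordered generators of $\tau$. Your extra check that the orientation is forced by integrality of the transition matrix is a detail the paper leaves implicit, but it does not change the argument.
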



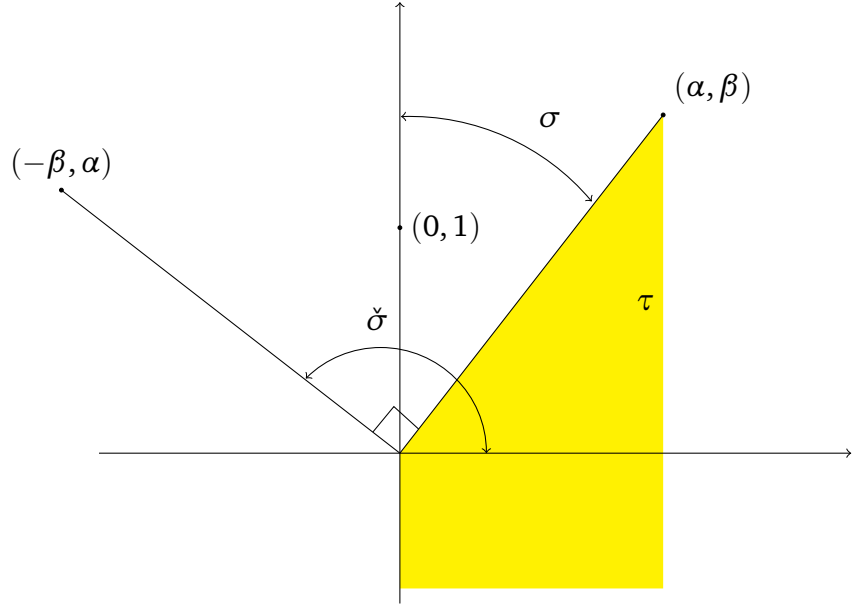
\begin{figure}
\begin{tikzpicture}
\path [fill=yellow] (0,0) -- (3.5,4.5)  -- (3.5,-1.8) -- (0,-1.8) (0,0);
\node [right] at (3,2) {$\tau$}; 
\draw [->] (-4,0) -- (6,0);
\draw [->] (0,-2) -- (0,6);
\draw [-] (0,0) -- (3.5,4.5);
\draw [-] (-0.36,0.28) -- (-0.08,0.62)--(0.25,0.32);
\draw [<->] (1.15,0) arc (0:135:40pt);
\draw [<->] (2.55,3.35) arc (40:92:90pt);
\node [above] at (2,4.2) {$\sigma$};
\node [above left] at (0,1.5) {$\check\sigma$};
\draw [-] (0,0) -- (-4.5,3.5);
\node [above] at (-4.5,3.5) {$(-\beta,\alpha)$}; 
\draw [fill] (3.5,4.5) circle [radius=0.025];
\draw [fill] (-4.5,3.5) circle [radius=0.025];
\node [above right] at (3.5,4.5) {$(\alpha,\beta)$};  
\draw [fill] (0,3) circle [radius=0.025];
\node [right] at (0,3) {$(0,1)$};
\end{tikzpicture}
\caption{$\sigma$, $\check\sigma$ and $\tau$}
\label{cones}
\end{figure}

\begin{proof}
It is easy to see the dual cone $\check\sigma$ has primitive basis
$((1,0),(-\beta,\alpha))$. 
See Fig. \ref{cones}. Since the rotation by $-90$ degree belongs to $SL_2(\FZ)$,
we have the desired similarity of the cones.
\end{proof}

After Prop. \ref{sim} and Lemma \ref{dual}, for $\fb$ as before, we have
$$
\check{\sigma} \sim \tau:=\tau((0,-1),(\alpha_{\ell-1},\beta_{\ell-1})
$$
thus 
$$S_{\check\sigma}(x_1,x_2)=S_{\tau}(x_1,x_2).$$ 
Let
$v_{-1}=(0,1), \,\,v_{0}=(1,0)$ and for $1\leq i\leq\ell -1,$
$$v_{i}=(\alpha_i,\beta_i),$$ for $\alpha_i, \beta_i$  defined
as in Eq.\eqref{albe}.
Notice that $v_i$ corresponds to $B_{-i+1}$and $v_{-1}$, $v_0$ are the two standard basis  of $M$. 
Then the decomposition of $\sigma$ yields that of $\check\sigma$:
\begin{prop}\label{todd} With above notations, let
$\sigma_0':=\sigma_0'(-v_{-1},v_0)$ and 
$$
\sigma_i:=\sigma_i(v_{i-1},v_i),
$$ for $i\geq0$.
Then we have
$$
\check\sigma\sim \tau := \tau(\nu_{-1},\nu_\ell)= \sigma_0'+\sigma_1+\sigma_2+\sigma_3+\cdots +\sigma_{\ell-1}.
$$
Thus we have
\begin{equation*}
S_{\check{\sigma}}(x_1,x_2)=
F(A_{\sigma'_0}^{-1}A_{\tau}(x_1,x_2)^{t}) + \sum_{i=1}^{\ell-1} (-1)^i F(A_{\sigma_i}^{-1} A_\tau (x_1,x_2)^t).
\end{equation*}
\end{prop}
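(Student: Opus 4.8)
The plan is to reduce the identity for $S_{\check\sigma}$ entirely to the additivity theorem (Thm.~\ref{add}), using the virtual decomposition of $\tau$ coming from the positive continued fraction, and then to read off the coefficients from the determinants of the pieces. By Lemma~\ref{dual} together with Prop.~\ref{sim} we already have $\check\sigma\sim\tau$ and $S_{\check\sigma}=S_\tau$, the similarity being the rotation by $-90^\circ$, which lies in $SL_2(\FZ)$ and so contributes no sign. Hence it suffices to decompose $\tau$ and apply the normalized Todd additivity to $S_\tau$.

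First I would exhibit the decomposition $\tau=\sigma_0'+\sigma_1+\cdots+\sigma_{\ell-1}$ as a telescoping of the formal rule $\sigma_{ij}+\sigma_{jk}=\sigma_{ik}$ from Sec.~\ref{Additivity_Todd}. Along the chain of generators $-v_{-1}=(0,-1),\,v_0=(1,0),\,v_1,\ldots,v_{\ell-1}=(\alpha_{\ell-1},\beta_{\ell-1})$, the pieces $\sigma_0'=\sigma_0'(-v_{-1},v_0)$ and $\sigma_i=\sigma_i(v_{i-1},v_i)$ telescope to $\sigma(-v_{-1},v_{\ell-1})=\tau$. The point to emphasize is that this is a \emph{virtual} decomposition: the $v_i$ need not be angularly monotone in the standard coordinates of $M$, so the identity is to be read through the formal additivity rather than as a genuine geometric subdivision of $\tau$, and it is exactly this feature that forces the alternating signs below.

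Next I would verify that each piece is nonsingular, which is what licenses the ``in particular'' clause of Thm.~\ref{add}. This is immediate from the continued-fraction recursion $B_{i-1}=a_iB_i+B_{i+1}$: the coordinate vectors satisfy $(v_i\ v_{i-1})=\prod_{k=1}^{i}\begin{pmatrix}a_{\ell-k}&1\\1&0\end{pmatrix}$, a product of $i$ matrices each of determinant $-1$, so consecutive pairs $v_{i-1},v_i$ form a $\FZ$-basis of $M$ and $\det A_{\sigma_i}=\pm1$ alternates with $i$ (and $\det A_{\sigma_0'}=1$). Feeding the telescoped decomposition into Thm.~\ref{add} then yields
$$S_\tau(x_1,x_2)=\det(A_{\sigma_0'})\,F(A_{\sigma_0'}^{-1}A_\tau(x_1,x_2)^t)+\sum_{i=1}^{\ell-1}\det(A_{\sigma_i})\,F(A_{\sigma_i}^{-1}A_\tau(x_1,x_2)^t),$$
and since $S_{\check\sigma}=S_\tau$ it remains only to evaluate the determinants.

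The main obstacle is precisely this last piece of sign bookkeeping. One must reconcile the alternating determinants $\det A_{\sigma_i}$, computed from $(v_i\ v_{i-1})=M_i$ with $\det M_i=(-1)^i$ and $A_{\sigma_i}=(v_{i-1}\ v_i)$, with the orientation of $\tau$, the choice $-v_{-1}$ for the initial generator of $\sigma_0'$, and the (sign-free) similarity $\check\sigma\sim\tau$, so as to produce exactly the stated coefficients $1$ for the $\sigma_0'$-term and $(-1)^i$ for the $\sigma_i$-terms. I expect no analytic difficulty here; the real content is keeping the orientation of each virtual piece consistent throughout the telescoping, which is the delicate part of the argument.
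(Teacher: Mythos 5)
Your proposal is correct and follows the paper's own proof essentially step for step: $S_{\check\sigma}=S_\tau$ via Lemma~\ref{dual} and Prop.~\ref{sim}, the telescoping virtual decomposition $\tau=\sigma_0'+\sigma_1+\cdots+\sigma_{\ell-1}$ (which the paper writes in two stages as $\tau=\sigma_0'+\rho$ and $\rho=\sigma_1+\cdots+\sigma_{\ell-1}$), nonsingularity and the alternating determinants from the continued-fraction recursion, and then Thm.~\ref{add}. The one step you defer---reconciling $\det A_{\sigma_i}=\alpha_{i-1}\beta_i-\alpha_i\beta_{i-1}=(-1)^{i-1}$ with the displayed coefficient $(-1)^i$---is not carried out in the paper either, and in fact cannot be: Thm.~\ref{add} yields the coefficient $\det A_{\sigma_i}=(-1)^{i-1}$ on the nose, so the sign in the displayed formula should be read as $(-1)^{i-1}$ (a typo that the paper's own proof, which computes exactly this determinant, implicitly corrects), and your instinct that this orientation bookkeeping is the only delicate point is accurate.
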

\begin{proof}
First, one should notice that
$$
\tau = \sigma'_0 + \rho
$$
where $\rho= \rho(\nu_0,\nu_{\ell-1})$
(See Fig. \ref{cones}).
Note also 
$$
\det(A_{\sigma_{i}}) = \det\begin{pmatrix}
   \alpha_{i-1}   & \alpha_i   \\
    \beta_{i-1}  & \beta_i
\end{pmatrix}=\beta_{i}\alpha_{i-1}-\alpha_i\beta_{i-1}=(-1)^{i-1}.
 $$
Hence the decomposition of $\rho$ into nonsingular cones $\sigma_i$
$$
\rho = \sigma_1 + \sigma_2 + \cdots + \sigma_{\ell-1}
$$
finishes the proof.
\end{proof}

\begin{lem}\label{mi}
For $ -1\leq i\leq \ell-1$, let
$$
M_i:=(-1)^{i+1}((\beta_i\alpha_{\ell-1}-\alpha_i\beta_{\ell-1})x_2+\alpha_ix_1).
$$
Then we have
$$
\td_{\check\sigma}(x_1,x_2)=\alpha_{\ell-1}x_1x_2 \left( 
\sum_{i=-1}^{\ell-2}(-1)^{i}
F(M_i,M_{i+1})+\frac{1}{1-e^{-\alpha_{\ell-1}x_2}}
\right),
$$
where
$F(x_1,x_2)=\frac{1}{1-e^{-x_1}}\frac{1}{1-e^{-x_2}}.$
\end{lem}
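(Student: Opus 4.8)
The plan is to unwind the definition of the normalized Todd series and then feed in the additive decomposition of the dual cone. First I would use $S_{\check\sigma}=S_\tau$ (as established above via Proposition~\ref{sim} and Lemma~\ref{dual}) together with $\det(A_\tau)=\alpha_{\ell-1}$, read off from $A_\tau=\left(\begin{smallmatrix}0&\alpha_{\ell-1}\\-1&\beta_{\ell-1}\end{smallmatrix}\right)$, to write $\td_{\check\sigma}(x_1,x_2)=\det(A_\tau)\,x_1x_2\,S_{\check\sigma}(x_1,x_2)=\alpha_{\ell-1}x_1x_2\,S_{\check\sigma}(x_1,x_2)$. This already accounts for the prefactor $\alpha_{\ell-1}x_1x_2$, so it remains to identify $S_{\check\sigma}$ with the bracketed expression. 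By Theorem~\ref{add}, specialized to the decomposition $\tau=\sigma_0'+\sigma_1+\cdots+\sigma_{\ell-1}$ of Proposition~\ref{todd}, this reduces to evaluating the linear substitutions $A_{\sigma_i}^{-1}A_\tau(x_1,x_2)^t$ and weighting each $F$ by the orientation sign $\det(A_{\sigma_i})$.

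Next I would carry out the matrix computation for the interior cones $\sigma_i=\sigma_i(v_{i-1},v_i)$, $1\le i\le\ell-1$. Inverting $A_{\sigma_i}=\left(\begin{smallmatrix}\alpha_{i-1}&\alpha_i\\\beta_{i-1}&\beta_i\end{smallmatrix}\right)$, whose determinant is $(-1)^{i-1}$ as recorded in Proposition~\ref{todd}, and multiplying by $A_\tau$, I expect $A_{\sigma_i}^{-1}A_\tau(x_1,x_2)^t=(M_i,M_{i-1})$: the first component comes out as $(-1)^{i-1}\bigl(\alpha_ix_1+(\beta_i\alpha_{\ell-1}-\alpha_i\beta_{\ell-1})x_2\bigr)=M_i$ and the second as $(-1)^{i}\bigl(\alpha_{i-1}x_1+(\beta_{i-1}\alpha_{\ell-1}-\alpha_{i-1}\beta_{\ell-1})x_2\bigr)=M_{i-1}$, once the powers of $-1$ are collected against the definition of $M_\bullet$. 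Since $F(x_1,x_2)=F(x_2,x_1)$ is symmetric, $F(M_i,M_{i-1})=F(M_{i-1},M_i)$, and the reindexing $j=i-1$ turns $\sum_{i=1}^{\ell-1}\det(A_{\sigma_i})F(A_{\sigma_i}^{-1}A_\tau(x)^t)=\sum_{i=1}^{\ell-1}(-1)^{i-1}F(M_{i-1},M_i)$ into $\sum_{j=0}^{\ell-2}(-1)^jF(M_j,M_{j+1})$, which is exactly the target sum $\sum_{i=-1}^{\ell-2}(-1)^iF(M_i,M_{i+1})$ with only the $i=-1$ term missing.

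Finally I would treat the cone $\sigma_0'=\sigma_0'((0,-1),(1,0))$, with $\det(A_{\sigma_0'})=1$, separately; it is the source of both the missing $i=-1$ term and the leftover degenerate factor. Computing $A_{\sigma_0'}^{-1}A_\tau(x_1,x_2)^t=(x_1-\beta_{\ell-1}x_2,\ \alpha_{\ell-1}x_2)$ and using the boundary values $\alpha_{-1}=0,\ \beta_{-1}=1,\ \alpha_0=1,\ \beta_0=0$ gives $M_{-1}=\alpha_{\ell-1}x_2$ and $M_0=-(x_1-\beta_{\ell-1}x_2)$. The key is then the elementary identity $\frac{1}{1-e^{-u}}=1-\frac{1}{1-e^{u}}$ applied with $u=x_1-\beta_{\ell-1}x_2$, which rewrites $F(A_{\sigma_0'}^{-1}A_\tau(x)^t)$ as $-F(M_{-1},M_0)+\frac{1}{1-e^{-\alpha_{\ell-1}x_2}}$, i.e.\ as the $i=-1$ term $(-1)^{-1}F(M_{-1},M_0)$ plus the degenerate one-variable factor. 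Assembling the three contributions and multiplying through by $\alpha_{\ell-1}x_1x_2$ yields the claim. The only real difficulty I anticipate is the sign and index bookkeeping: keeping the orientation signs $\det(A_{\sigma_i})=(-1)^{i-1}$, the symmetry swap inside $F$, and the shift $j=i-1$ mutually consistent so that the $\sigma_0'$ contribution lands precisely as the $i=-1$ summand together with the leftover factor $\frac{1}{1-e^{-\alpha_{\ell-1}x_2}}$.
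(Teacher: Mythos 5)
Your proof is correct and follows essentially the same route as the paper: apply Proposition~\ref{todd} (i.e.\ Theorem~\ref{add} for the decomposition $\tau=\sigma_0'+\sigma_1+\cdots+\sigma_{\ell-1}$), identify $A_{\sigma_i}^{-1}A_\tau(x_1,x_2)^t$ with the pair $(M_i,M_{i-1})$, and absorb the $\sigma_0'$ contribution via the identity $F(u,v)+F(-u,v)=\frac{1}{1-e^{-v}}$ into the $i=-1$ summand plus the leftover factor $\frac{1}{1-e^{-\alpha_{\ell-1}x_2}}$. Your sign and index bookkeeping (using $\det(A_{\sigma_i})=(-1)^{i-1}$ and the symmetry of $F$) is in fact more careful than the paper's terse version and lands exactly on the stated formula.
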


\begin{proof}
After simple computation, we obtain that
$$
A_{\sigma_{i-1}}^{-1}A_\tau(x_1,x_2)^{t}=(M_{i+1},M_i)^{t}.$$
Since $F(-x_1,x_2)+F(x_1,x_2)=\frac{1}{1-e^{-x_2}}$, we have
\begin{equation*}
\begin{split}
&F(C_0'^{-1}\tau(x_1,x_2)^{t})+F(C_0^{-1}\tau(x_1,x_2)^{t})\\
=&F(x_1-\beta_{\ell-1}x_2,\alpha_{\ell-1}x_2)+F(-(x_1-\beta_{\ell-1}x_2),\alpha_{\ell-1}x_2)\\
=&\frac{1}{1-e^{-\alpha_{\ell-1}x_2}}
\end{split}
\end{equation*}
Note 
$$
\det\begin{pmatrix}
     0 & \alpha_{\ell-1}   \\
     -1 &  \beta_{\ell-1}
\end{pmatrix}=\alpha_{\ell-1}.
$$
If we apply the above to Prop. \ref{todd}, we complete the proof.
\end{proof}
Let $\td_{\sigma}(x_1,x_2)^{(n)}$ be the degree $n$ homogeneous part of  $\td_{\sigma}(x_1,x_2).$
\begin{prop}\label{homo}
Let
\begin{align*}
L_k(X,Y)&=\sum_{i=1}^{2k+1}\frac{B_{i}}{i!}\frac{B_{2k+2-i}}{(2k+2-i)!}X^{i-1}Y^{2k-i+1},\\
R_k(X,Y)&=X^{2k}+X^{2k-1}Y+ \cdots+Y^{2k}.
\end{align*}
Then we have
\begin{equation*}
\begin{split}
&\td_{\check{\sigma}}(x_1,x_2)^{(2k+2)}=\\
&\alpha_{\ell-1}\left(\sum_{i=-1}^{\ell-2}(-1)^i L_k(M_{i+1},M_i)x_1x_2+\sum_{i=1}^{\ell-1} (-1)^ia_{\ell-i}R_k(M_{i-2},M_i)x_1x_2\frac{B_{2k+2}}{(2k+2)!}\right)\\
&+\frac{B_{2k+2}}{(2k+2)!}(-x_1M_0^{2k+1}+x_2M_{\ell-2}^{2k+1})+\delta_{k,0}\frac{1}{2}\alpha_{\ell-1} x_1 x_2.
\end{split}
\end{equation*}
\end{prop}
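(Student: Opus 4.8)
The plan is to start from the closed form of $\td_{\check\sigma}(x_1,x_2)$ in Lemma \ref{mi} and read off its degree $2k+2$ homogeneous part. Since each $M_i$ is linear in $(x_1,x_2)$ and the whole bracket is multiplied by the degree-two factor $\alpha_{\ell-1}x_1x_2$, we have $\td_{\check\sigma}(x_1,x_2)^{(2k+2)}=\alpha_{\ell-1}x_1x_2\cdot G^{(2k)}$, where $G^{(2k)}$ is the degree $2k$ homogeneous part of $\sum_{i=-1}^{\ell-2}(-1)^iF(M_i,M_{i+1})+\frac{1}{1-e^{-\alpha_{\ell-1}x_2}}$. First I would expand each factor by the Bernoulli generating function $\frac{1}{1-e^{-X}}=\sum_{n\ge0}\frac{(-1)^nB_n}{n!}X^{n-1}$, so that $F(M_i,M_{i+1})=\bigl(\sum_a\frac{(-1)^aB_a}{a!}M_i^{a-1}\bigr)\bigl(\sum_b\frac{(-1)^bB_b}{b!}M_{i+1}^{b-1}\bigr)$ and its degree $2k$ part is the sum over $a+b=2k+2$.

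Next I would split these $a+b=2k+2$ terms into a regular part ($a,b\ge1$) and a single-pole part ($a=0$ or $b=0$, recalling $B_0=1$). Using $(-1)^{a+b}=(-1)^{2k+2}=1$, the regular part is exactly $\sum_{a+b=2k+2,\,a,b\ge1}\frac{B_aB_b}{a!\,b!}M_i^{a-1}M_{i+1}^{b-1}=L_k(M_{i+1},M_i)$, matching the definition of $L_k$ after the substitution $b=j$, $a=2k+2-j$. Summed with signs and multiplied by $\alpha_{\ell-1}x_1x_2$ this produces the first displayed sum. The remaining term $\frac{1}{1-e^{-\alpha_{\ell-1}x_2}}$ has degree $2k$ part $\frac{(-1)^{2k+1}B_{2k+1}}{(2k+1)!}(\alpha_{\ell-1}x_2)^{2k}$, which vanishes for $k\ge1$ since $B_{2k+1}=0$ and equals $\frac12$ for $k=0$; this yields the $\delta_{k,0}\tfrac12\alpha_{\ell-1}x_1x_2$ term.

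The heart of the argument is the single-pole part $\frac{B_{2k+2}}{(2k+2)!}\sum_{i=-1}^{\ell-2}(-1)^i\bigl(M_i^{-1}M_{i+1}^{2k+1}+M_i^{2k+1}M_{i+1}^{-1}\bigr)$, whose summands have poles individually but must assemble into a polynomial. I would regroup by the common factor $M_j^{-1}$, whose coefficient is $(-1)^j\bigl(M_{j+1}^{2k+1}-M_{j-1}^{2k+1}\bigr)$. From the matrix recursion \eqref{albe} one reads off $(\alpha_i,\beta_i)=a_{\ell-i}(\alpha_{i-1},\beta_{i-1})+(\alpha_{i-2},\beta_{i-2})$; since each $M_i=(-1)^{i+1}N_i$ with $N_i$ linear in $(\alpha_i,\beta_i)$, this gives the three-term relation $M_{j+1}-M_{j-1}=-a_{\ell-j-1}M_j$. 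Factoring $X^{2k+1}-Y^{2k+1}=(X-Y)R_k(X,Y)$ then cancels $M_j^{-1}$ and leaves, after reindexing $i=j+1$ and using that $R_k$ is symmetric, precisely $(-1)^ia_{\ell-i}R_k(M_{i-2},M_i)$ for $1\le i\le\ell-1$, which is the asserted $R_k$-sum.

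Finally the two endpoints $j=-1$ and $j=\ell-1$ of the regrouping are unmatched and produce the boundary terms: using $(\alpha_{-1},\beta_{-1})=(0,1)$ and $\ell$ even one computes $M_{-1}=\alpha_{\ell-1}x_2$ and $M_{\ell-1}=\alpha_{\ell-1}x_1$, so that $\alpha_{\ell-1}x_1x_2\cdot M_{-1}^{-1}M_0^{2k+1}=x_1M_0^{2k+1}$ and $\alpha_{\ell-1}x_1x_2\cdot M_{\ell-1}^{-1}M_{\ell-2}^{2k+1}=x_2M_{\ell-2}^{2k+1}$, giving $\frac{B_{2k+2}}{(2k+2)!}(-x_1M_0^{2k+1}+x_2M_{\ell-2}^{2k+1})$ with the correct signs. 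I expect the main obstacle to be exactly this index-and-sign bookkeeping in the telescoping step: deriving the recurrence with $a_{\ell-i}$ from the matrix-product form in \eqref{albe} rather than from the naive relation $B_{i-1}=a_iB_i+B_{i+1}$, correctly carrying the $(-1)^{i+1}$ twist built into $M_i$, and cleanly isolating the two boundary contributions. This is the technical computation paralleling Zagier's and deferred in part to Section \ref{vanishing_section}.
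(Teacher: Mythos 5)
Your proposal is correct and follows essentially the same route as the paper's proof: extract the degree-$2k$ part of the expression in Lemma \ref{mi}, split the Bernoulli expansion of $F(M_i,M_{i+1})$ into the regular part giving $L_k$ and the single-pole part, regroup the latter by $M_j^{-1}$, apply the recurrence $M_{i+1}=-a_{\ell-i-1}M_i+M_{i-1}$ together with the factorization $X^{2k+1}-Y^{2k+1}=(X-Y)R_k(X,Y)$, and evaluate the two unmatched boundary terms via $M_{-1}=\alpha_{\ell-1}x_2$, $M_{\ell-1}=\alpha_{\ell-1}x_1$. The only slip is the closing remark: Section \ref{vanishing_section} concerns the vanishing of the $\mathcal{R}$-contribution to the zeta value, not any part of this proposition's proof.
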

\begin{proof}
From Prop. \ref{mi}, we find that
$$
\td_{\check{\sigma}}(x_1,x_2)^{(2k+2)}=\alpha_{\ell-1}x_1x_2\sum_{i=-1}^{\ell-2}(-1)^{i}
F(M_i,M_{i+1})^{(2k)}
+x_1\frac{\alpha_{\ell-1}x_2}{1-e^{-\alpha_{\ell-1}x_2}}^{(2k+1)}.
$$
We have
\begin{equation*}
\begin{split}
&
F(M_i,M_{i+1})^{(2k)}\\
=& \sum_{m=1}^{2k+1} \frac{B_m}{m!}\frac{ B_{2k+2-m}}{(2k+2-m)!} M_{i+1}^{m-1}M_i^{2k-m+1}
+\frac{B_{2k+2}}{(2k+2)!}
\left(
\frac{M_i^{2k+1}}{M_{i+1}}+\frac{M_{i+1}^{2k+1}}{M_{i}}
\right)\\
=& L_k(M_{i+1},M_i)
+\frac{B_{2k+2}}{(2k+2)!}\left(
\frac{M_i^{2k+1}}{M_{i+1}}+\frac{M_{i+1}^{2k+1}}{M_{i}}
\right)
\end{split}
\end{equation*}
and $$
\frac{\alpha_{\ell-1}x_2}{1-e^{-\alpha_{s-1}x_2}}^{(2k+1)}
=-\frac{B_{2k+1}}{(2k+1)!}\alpha_{\ell-1}^{2k+1}  x_2^{2k+1}
=\delta_{k,0}\frac{1}{2}\alpha_{\ell-1} x_2,
$$
as $B_{2k+1}=0$ for $k>0$.

Moreover we also have the following:
\begin{equation}\label{alter}
\begin{split}
&\sum_{i=-1}^{\ell-2}(-1)^i (M_{i+1}^{-1}M_i^{2k+1}+M_{i+1}^{2k+1}M_{i}^{-1})\\
&=-\frac{M_0^{2k+1}}{M_{-1}} +\frac{M_{\ell-2}^{2k+1}}{M_{\ell-1}}+\sum_{i=1}^{\ell-1}(-1)^{i}\frac{M_{i-2}^{2k+1}-M_{i}^{2k+1}}{M_{i-1}}.
\end{split}
\end{equation}
As
$$
\begin{cases} (\alpha_{-1},\beta_{-1})=(0,1)\\
\alpha_{i+1}=a_{\ell-i-1}\alpha_{i}+\alpha_{i-1} \\
\beta_{i+1}=a_{\ell-i-1}\beta_{i}+\beta_{i-1}
\end{cases}
$$
we have
$$
M_{-1}=\beta_{-1}\alpha_{\ell-1}x_2-\alpha_{-1}(-x_1+\beta_{\ell-1}x_2)=\alpha_{\ell-1}x_2,
$$ 
$$
M_{\ell-1}=\beta_{\ell-1}\alpha_{\ell-1}x_2-\alpha_{\ell-1}(-x_1+\beta_{\ell-1}x_2)=\alpha_{\ell-1}x_1.
$$
and
$$M_{i+1}=-a_{\ell-i-1}M_i+M_{i-1}.$$
Therefore \eqref{alter} is equal to
\begin{equation}
\begin{split}
&-\frac{M_0^{2k+1}}{\alpha_{\ell-1}x_2} +\frac{M_{\ell-2}^{2k+1}}{\alpha_{\ell-1}x_1}+\sum_{i=1}^{\ell-1}(-1)^{i}a_{\ell-i}\frac{M_{i-2}^{2k+1}-M_{i}^{2k+1}}{M_{i-2}-M_{i}}\\
&=-\frac{M_0^{2k+1}}{\alpha_{\ell-1}x_2} +\frac{M_{\ell-2}^{2k+1}}{\alpha_{\ell-1}x_1}+\sum_{i=1}^{\ell-1}(-1)^{i}a_{\ell-i}R_k(M_{i-2},M_i)
\end{split}
\end{equation}
Thus we finally complete proof.
\end{proof}

\section{Special values of zeta function}\label{special_zeta_values}
Now we are going to evaluate the values of $\zeta(s,\fb)$ at non-positive integers using the 
expression of the degree $n$ homogeneous part of the Todd series
made in the previous section.
We suppose $\fb$ is an integral ideal normalized as in the previous section so that $\fb^{-1}=[1,\omega]$ for 
$$
\omega=[[a_0,a_1,\cdots,a_{r-1}]].
$$
and $\epsilon>1$ denotes the totally positive fundamental unit of $K$.
Let $\ell$ be the even period of continued fraction expansion of $\omega$.

$(\alpha_i,\beta_i)$ for $i=1,2,\cdots,\ell-1,$
%
and 
$$(\alpha_{-2},\beta_{-2})=(1,-a_0),\,\,(\alpha_{-1},\beta_{-1})=(0,1),\,\,(\alpha_0,\beta_0)=(1,0)$$
are primitive lattice vectors in $M$. Note that $(\alpha_i,\beta_i)$ corresponds to $B_{-i+1}$
in $K_\FR$.
$$Q(x_1,x_2):=N(\fb)(x_1\omega+x_2)(x_1\omega'+x_2).$$

Then the partial zeta function $\zeta(s,\fb)$ is expressed as (See Prop.\ref{zeta}.)
$$\zeta(s,\fb)=\sum_{l\in M}\frac{wt_{\sigma(\fb^{-1})}^2(l)}{Q(l)^s}.$$
where  
$$
\sigma(\fb^{-1})=\sigma((0,1),(\alpha_{\ell-1},\beta_{\ell-1})).
$$

In Thm.\ref{Garou-Pommer}., the partial zeta value is written using the Todd differential operator of the cone 
$\check\sigma$ dual to $\sigma = \sigma(\fb^{-1})$. 
We apply the additivity of the Todd series(Prop.\ref{homo}.) after the cone decomposition of $\check{\sigma}$ 
occurring in 
the continued fraction of $\omega$ to this expression.
Then we obtain the following expression of the partial zeta value:

\begin{equation}\label{zeta16}
\zeta(-k,\fb)= (-1)^k k! ( \mathcal{L} + \mathcal{R})\circ \int_{\sigma(h)}e^{-Q(x_1,x_2)}dx_1dx_2 \Big|_{h=0}
\end{equation}
where 
\begin{equation}
\begin{split}
&\mathcal{L} :=
\sum_{i=-1}^{\ell-2}(-1)^i L_k(M_{i+1},M_i)(\partial_{h_1},\partial_{h_2}) \alpha_{\ell-1} \partial_{h_1}\partial_{h_2}\\
&+ \frac{B_{2k+2}}{(2k+2)!}\sum_{i=0}^{\ell-1}(-1)^i a_{\ell-i} R_k (M_{i-2},M_i)(\partial_{h_1},\partial_{h_2})\alpha_{\ell-1} \partial_{h_1}\partial_{h_2}
\end{split}\end{equation}
and 
\begin{equation}
\begin{split}
\mathcal{R}:= 
\frac{B_{2k+2}}{(2k+2)!}&\big(-a_{\ell}R_k(M_{-2},M_0)(\partial_{h_1},\partial_{h_2})\alpha_{\ell-1}\partial_{h_1}\partial_{h_2}-\partial_{h_1} M_0^{2k+1}(\partial_{h_1},\partial_{h_2})
\\ &+ \partial_{h_2} M_{\ell-2}^{2k+1}(\partial_{h_1},\partial_{h_2})\big).
\end{split}
\end{equation}
In (\ref{zeta16}), as the differential operators are linear, this expression 
can be evaluated one by one. 
Later in Sec.\ref{vanishing_section} we will prove that the part of (\ref{zeta16}) involving $\mathcal{R}$
vanishes:
\begin{equation}\label{vanishing_part}
\mathcal{R}\circ \int_{\sigma(h)}e^{-Q(x_1,x_2)}dx_1dx_2 \Big|_{h=0}=0
\end{equation}

Then it remains only to evaluate the part involving $\mathcal{L}$. 
First, we need to rewrite the integral in another coordinate
$(y_1,y_2)$ such that $(x_1,x_2) = (\alpha_{\ell-1} y_2,\beta_{\ell-1}y_2+y_1)$.
So we have $\sigma(h)$ in the new coordinate:
\begin{equation*}
\begin{split}
\sigma(h) &= \sigma(h_1,h_2)\\
&= 
\{y_1v_1+y_2v_2| (y_1v_1+y_2v_2,u_1)\geq-h_1, (y_1v_1+y_2v_2,u_2)\geq-h_2\}\\
&=\{(\alpha_{\ell-1}y_2,\beta_{\ell-1}y_2+y_1)| x_1\geq-\frac{h_1}{\alpha_{\ell-1}}, x_2\geq-\frac{h_2}{\alpha_{\ell-1}}\}.
\end{split}
\end{equation*}

In the new coordinate $(y_1,y_2)$ the integral becomes
\begin{equation}\label{basic}
\begin{split}
&\int_{\sigma(h)}e^{-Q(x_1,x_2)}dx_1 dx_2=\alpha_{\ell-1}\int_{-\frac{h_2}{\alpha_{\ell-1}}}^{\infty}\int_{-\frac{h_1}{\alpha_{\ell-1}}}^{\infty}e^{-Q(\alpha_{\ell-1}y_2,\beta_{\ell-1}y_2+y_1)}dy_1 dy_2\\
&=\alpha_{\ell-1}\int_{-\frac{h_2}{\alpha_{\ell-1}}}^{\infty}\int_{-\frac{h_1}{\alpha_{\ell-1}}}^{\infty}e^{-N(\fb)N(\epsilon y_2+y_1)}dy_1 dy_2.
\end{split}
\end{equation}

This integral applied by $\alpha_{\ell-1}\partial_{h_1}\partial_{h_2}$ is 
\begin{equation}
\label{computation1}
\alpha_{\ell-1}\partial_{h_1}\partial_{h_2}\int_{\sigma(h)}e^{-Q(x_1,x_2)}dx_1 dx_2
=e^{-N(\fb)N(\frac{h_2}{\alpha_{\ell-1}}\epsilon+\frac{h_1}{\alpha_{\ell-1}})}
\end{equation}

The above simplifies (\ref{zeta16}) quite much assuming the vanishing of (\ref{vanishing_part}):
\begin{equation}\label{zeta21}
\begin{split}
&\zeta(-k,\fb) =
 (-1)^k k! 
\Big(\sum_{i=-1}^{\ell-2} L_k(M_{i+1},M_i)(\partial_{h_1},\partial_{h_2})
\\ &+\frac{B_{2k+2}}{(2k+2)!}\sum_{i=0}^{\ell-1} (-1)^i a_{\ell-i} R_k(M_{i-2},M_i)(\partial_{h_1},\partial_{h_2})
\Big)\circ 
e^{-N(\fb)N(\frac{h_2}{\alpha_{\ell-1}}\epsilon+\frac{h_1}{\alpha_{\ell-1}})}\Big|_{h=0}
\end{split}
\end{equation}


\begin{lem}\label{computation2}
Let $A_i=\alpha_i\omega+\beta_i.$ For $-1\leq m, l\leq \ell-1$, we have
$$M_l(\partial_{h_1}, \partial_{h_2})^iM_m(\partial_{h_1}, \partial_{h_2})^j e^{-N(\fb)N(\frac{h_2}{\alpha_{\ell-1}}\epsilon+\frac{h_1}{\alpha_{\ell-1}})}|_{h=0}=$$
$$
\partial_{h_1}^i\partial_{h_2}^j e^{-N(\fb)N((-1)^{l+1}A_lh_1+(-1)^{m+1}A_mh_2)}|_{h=0},
$$
for 
$M_i(x_1,x_2)=(-1)^{i+1}\left((\beta_i\alpha_{\ell-1}-\alpha_i\beta_{\ell-1})x_2+\alpha_ix_1\right)$.

\end{lem}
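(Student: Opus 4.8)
The plan is to recognize both sides as directional-derivative monomials applied to one and the same Gaussian, and then to exploit a symmetry of the norm form coming from the fundamental unit. Set $g(y_1,y_2)=e^{-Q(y_1,y_2)}$ with $Q(y_1,y_2)=N(\fb)N(y_1\omega+y_2)$, regarded as a function on $K_\FR$ through the identification $\nu(y_1,y_2)=y_1\omega+y_2$. First I would write the exponential on the left of \eqref{computation1} as $g\circ\Psi$, where $\Psi$ is the linear change $y_1=h_2$, $y_2=(h_1+\beta_{\ell-1}h_2)/\alpha_{\ell-1}$; this is exactly the substitution turning $(h_1+h_2\epsilon)/\alpha_{\ell-1}$ into $y_1\omega+y_2$, since $\epsilon=\alpha_{\ell-1}\omega+\beta_{\ell-1}$. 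Because each $M_l(\partial_{h_1},\partial_{h_2})$ is a constant-coefficient first-order operator, the chain rule gives $M_l(\partial_{h_1},\partial_{h_2})^iM_m(\partial_{h_1},\partial_{h_2})^j(g\circ\Psi)=(D_{m_l}^iD_{m_m}^jg)\circ\Psi$, where $D_\xi$ is differentiation along $\xi\in K_\FR$ and $m_l\in K_\FR$ is the image under $\Psi$ of the coefficient vector of $M_l$. Evaluating at $h=0$ (so $y=0$) turns the left-hand side into $D_{m_l}^iD_{m_m}^jg\,|_{0}$, while the right-hand side is already $g$ with argument $(-1)^{l+1}A_lh_1+(-1)^{m+1}A_mh_2$, hence equals $D_{(-1)^{l+1}A_l}^iD_{(-1)^{m+1}A_m}^jg\,|_0$.

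The bridge between the two is the linear map $\Theta\colon K_\FR\to K_\FR$, $\Theta(\xi)=\epsilon\,\xi'$, where $\xi'$ denotes the Galois conjugate. Since $\epsilon$ is a totally positive unit we have $N(\Theta\xi)=N(\epsilon)N(\xi')=N(\xi)$, so $\Theta$ is an isometry of the norm form and $g\circ\Theta=g$. For any directions $w_k$, the invariance $g\circ\Theta=g$ together with $\Theta(0)=0$ yields, again by the chain rule, $\prod_kD_{\Theta w_k}\,g\,|_0=\prod_kD_{w_k}\,g\,|_0$. Thus, once I establish the direction identity
\[
m_l=(-1)^{l+1}\Theta(A_l)=(-1)^{l+1}\epsilon\,A_l',
\]
I may replace $\Theta(A_l)$ by $A_l$ inside the derivative monomial without changing its value at $0$; the common sign $(-1)^{l+1}$ attached to every $l$-factor (and $(-1)^{m+1}$ to every $m$-factor) factors out as $(-1)^{(l+1)i+(m+1)j}$ on both sides, and the left-hand side collapses to $D_{(-1)^{l+1}A_l}^iD_{(-1)^{m+1}A_m}^jg\,|_0$, which is precisely the right-hand side.

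It remains to verify the boxed identity, and this is the computational heart of the argument. Writing $M_l(\partial_{h_1},\partial_{h_2})=(-1)^{l+1}\bigl(\alpha_l\partial_{h_1}+(\beta_l\alpha_{\ell-1}-\alpha_l\beta_{\ell-1})\partial_{h_2}\bigr)$ and pushing $\partial_{h_1},\partial_{h_2}$ through $\Psi$, I find $m_l=(-1)^{l+1}(D_l\omega+e_l)$ with $D_l=\beta_l\alpha_{\ell-1}-\alpha_l\beta_{\ell-1}$ and $e_l=(\alpha_l+D_l\beta_{\ell-1})/\alpha_{\ell-1}$. Expanding $\epsilon A_l'=(\alpha_{\ell-1}\omega+\beta_{\ell-1})(\alpha_l\omega'+\beta_l)$ and using $\omega\omega'=N(\omega)$ and $\omega+\omega'=\operatorname{Tr}(\omega)$, one checks immediately that its $\omega$-coefficient is $D_l$; the equality of the constant terms is exactly the unit relation $\alpha_{\ell-1}^2N(\omega)+\alpha_{\ell-1}\beta_{\ell-1}\operatorname{Tr}(\omega)+\beta_{\ell-1}^2=N(\epsilon)=1$. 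So $m_l=(-1)^{l+1}\epsilon A_l'$ holds, and it is precisely $N(\epsilon)=1$ that forces the constant coefficients to match. I expect this constant-term matching to be the only delicate point: it is the single genuinely arithmetic input, everything else reducing to the two chain-rule computations and the $\Theta$-invariance of $g$.
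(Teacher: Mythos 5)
Your proposal is correct, and it is at bottom the same mechanism as the paper's proof: both arguments rest on the fact that a constant-coefficient first-order operator is a directional derivative, so that $M_l(\partial_{h_1},\partial_{h_2})^iM_m(\partial_{h_1},\partial_{h_2})^j$ applied to a composite $g\circ(\text{linear map})$ can be rewritten as $\partial_{h_1}^i\partial_{h_2}^j$ applied to $g$ precomposed with another linear map. The difference is in how the final identification of directions is closed. The paper substitutes directly and uses the algebraic identities $\beta_{\ell-1}d_i+c_i=(-1)^{i+1}\beta_i\alpha_{\ell-1}$ and $\epsilon=\alpha_{\ell-1}\omega+\beta_{\ell-1}$ to land exactly on the argument $(-1)^{l+1}A_lh_1+(-1)^{m+1}A_mh_2$; implicitly this pairs $h_1$ with $\epsilon/\alpha_{\ell-1}$ and $h_2$ with $1/\alpha_{\ell-1}$, which is legitimate only because the quadratic form $N(h_1+h_2\epsilon)$ is symmetric in $h_1,h_2$ --- a symmetry equivalent to $N(\epsilon)=1$. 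You instead compute the directions as they literally arise, obtaining $m_l=(-1)^{l+1}\epsilon A_l'$ rather than $(-1)^{l+1}A_l$, and then remove the discrepancy by the norm isometry $\Theta(\xi)=\epsilon\xi'$ of $K_\FR$, again with $N(\epsilon)=1$ as the arithmetic input. Your route is slightly longer but has the virtue of making explicit where the unit relation enters (twice: in matching the constant coefficients of $m_l$ and in $g\circ\Theta=g$), whereas in the paper it is absorbed silently into the bookkeeping of which variable couples to $\epsilon$. Both computations check out; in particular your constant-term verification via $\alpha_{\ell-1}^2N(\omega)+\alpha_{\ell-1}\beta_{\ell-1}\operatorname{Tr}(\omega)+\beta_{\ell-1}^2=N(\epsilon)=1$ is exactly right.
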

\begin{proof}
For simplicity, let $c_i=(-1)^{i+1}(\beta_i\alpha_{\ell-1}-\alpha_i\beta_{\ell-1})$ and $d_i=(-1)^{i+1}\alpha_i.$
By internal change of coordinate $(h_1,h_2) \mapsto (ah_1+c h_2 , b h_1+d h_2)$,
we have
$$(a\partial_{h_1}+b\partial_{h_2})^i(c\partial_{h_1}+d\partial_{h_2})^j f(h_1,h_2)|_{h=0}=\partial_{h_1}^i\partial_{h_2}^jf(ah_1+ch_2,bh_1+dh_2)|_{h=0}.$$
Thus
\begin{equation*}
\begin{split}
M_l(\partial_{h_1},\partial_{h_2})^i & M_m(\partial_{h_1},\partial_{h_2})^j\circ e^{-N(\fb)N(\frac{h_2}{\alpha_{\ell-1}}\epsilon+\frac{h_1}{\alpha_{\ell-1}})}|_{h=0}\\
=&(d_l\partial_{h_1}+c_l \partial_{h_2})^i(d_m\partial_{h_1}+c_m \partial_{h_2})^j\circ e^{-N(\fb)N(\frac{h_2}{\alpha_{\ell-1}}\epsilon+\frac{h_1}{\alpha_{\ell-1}})}|_{h=0}\\
=&\partial_{h_1}^i\partial_{h_2}^j\circ e^{-N(\fb)N(\frac{d_lh_1+d_mh_2}{\alpha_{\ell-1}}\epsilon+\frac{c_lh_1+c_mh_2}{\alpha_{\ell-1}})}|_{h=0}
\end{split}
\end{equation*}

We note that
$$
\beta_{\ell-1}d_i+c_i=(-1)^{i+1}\beta_i\alpha_{\ell-1}.
$$
Since $\epsilon=\alpha_{\ell-1}\omega+\beta_{\ell-1}$,
we have
\begin{equation*}
\begin{split}
(d_lh_1&+d_mh_2)\epsilon+c_lh_1+c_mh_2\\
=& (d_l\beta_{\ell-1}+c_l )h_1+(d_m\beta_{\ell-1}+c_m )h_2+(d_lh_1+d_mh_2)\alpha_{\ell-1}\omega\\
=& \alpha_{\ell-1}\Big{(}((-1)^{l+1}\alpha_lh_1+(-1)^{m+1}\alpha_mh_2)\omega+(-1)^{l+1}\beta_lh_1+(-1)^{m+1}\beta_mh_2\Big{)}\\
=& \alpha_{\ell-1}((-1)^{l+1}A_l h_1 + (-1)^{m+1} A_m h_2)
\end{split}
\end{equation*}
\end{proof}

We note that 
\begin{equation*}
\begin{split}
&N(\fb)N((-1)^{l+1}A_lh_1+(-1)^{m+1}A_mh_2)\\
&=Q((-1)^{l+1}\alpha_lh_1+(-1)^{m+1}\alpha_m h_2,(-1)^{l+1}\beta_lh_1+(-1)^{m+1}\beta_m h_2),
\end{split}
\end{equation*}
for a binary quadratic form $Q(x,y)$ with degree $2$ and $i ,j$ with $i+j=2k$, 
we have
$$\partial_{h_1}^i\partial_{h_2}^j e^{-Q(h_1,h_2)}\Big|_{h=0}=(-1)^k\frac{1}{k!}\partial h_1^i\partial h_2^j Q(h_1,h_2)^k\Big|_{h=0}$$
Thus, from (\ref{computation1}) (\ref{zeta21}) and Lemma \ref{computation2}, we have 
finished the proof of our second main theorem(Thm. \ref{2nd_main}).
\begin{equation*}
\begin{split}
&\zeta(-k,\fb) =\\
&\sum_{i=0}^{\ell-1}(-1)^{i-1}L_k(\partial_{h_1},\partial_{h_2})Q(\alpha_ih_1-\alpha_{i-1}h_2,\beta_i h_1-\beta_{i-1}h_2)^k\Big{|}_{h=0}\\
&+\frac{B_{2k+2}}{(2k+2)!}\sum_{i=0}^{\ell-1}(-1)^ia_{\ell-i}R_n(\partial_{h_1},\partial_{h_2})Q(\alpha_{i-2}h_1+\alpha_{i}h_2,\beta_{i-2}h_1+\beta_{i}h_2)^k\Big{|}_{h=0}.
\end{split}
\end{equation*}

\begin{remark}
We have  obtained a polynomial expression of the zeta value in variables $\alpha_i, \beta_i$ and
the coefficients of the quadratic form $Q(x,y)$. For the polynomial expression, it is important to show the vanishing \eqref{vanishing_part}. In Sec. \ref{vanishing_section}, there appear $\alpha_0, \alpha_{\ell-1}$ in the denominator of the vanishing expression involving  the $\mathcal{R}$-operator.
This is a crucial ingredient of the Kummer congruence and  
the corresponding p-adic zeta function.
\end{remark}

\section{Computation of $\zeta(-k,\fb)$ for  $k=0,1$ and $2$}\label{computations}

In this section, we evaluate the zeta values $\zeta(-k,\fb)$ explicitly for small $n$.
We express the values in terms of the continued fraction expansion 
$[[a_0, \a_1, \ldots, a_{\ell-1}]]$ of the reduced basis $\omega$ of $\fb^{-1}$.

\subsection{k=0}
For $k=0$, the zeta value is already known by C. Meyer(\cite{Me}) in terms of negative continued fraction.
Using the plus-to-minus conversion formula of continued fraction
\begin{equation}\label{ptom}
\begin{split}
\delta & = \omega +1 = [[ a_0 , a_1, \ldots, a_{\ell-1}]] +1\\
 &= ((a_0+2,2,\ldots,2, a_2+2, 2,\ldots,2, a_4+2,\ldots, a_{\ell-2}+2, 2,\ldots,2))\\
 &= ((b_0,b_1,\ldots,b_{m-1}))
\end{split}\end{equation}
one obtains the result in positive continued fraction.

In our approach, we begin with the expression using positive continued fraction as a special case of Thm.\ref{2nd_main}:
$$
\zeta(0,\fb)=
\frac{B_2}{2}\sum_{i=0}^{\ell-1}(-1)^i a_{\ell-i}
$$
Since $B_2=1/6$ and $\ell$ is the even period of the continued fraction,
this reduces to 
\begin{equation}
\zeta(0,\fb)=
\frac{1}{12} \sum_{i=0}^{\ell-1}(-1)^i a_i
\end{equation}
Via \eqref{ptom}, one recovers the result of Meyer:
$$
\zeta(0,\fb) = \frac1{12}\sum_{i=0}^{m-1} (b_i -3),
$$
where $b_i$ is the $i$-th term of the negative continued fraction.

\begin{remark}
Note that using the positive continued fraction, we have an alternating sum for the zeta value.
Consequently, one sees directly the vanishing of $\zeta(0,\fb)$ when the actual period of the positive continued fraction
of $\omega$ is odd (equivalently, if the fundamental unit is not totally positive).
\end{remark}

\subsection{k=1 and 2}

For $Q(x_1,x_2) = N(\fb)N(x_1 \omega + x_2)$,
let $L_i$, $M_i$ and $N_i$ be defined as in
$$
Q(\alpha_i h_1 - \alpha_{i-1} h_2,\beta_i h_1  - \beta_{i-1} h_2)
= L_i h_1^2 + M_i h_1 h_2 + N_i h_2^2
$$
Similarly, 
$\tilde{L}_i$, $\tilde{M}_i$ and $\tilde{N}_i$ are defined as follows:
$$
Q(\alpha_{i-2} h_1 + \alpha_{i} h_2,\beta_{i-2} h_1 + \beta_{i} h_2)
= \tilde{L}_i h_1^2 + \tilde{M}_i h_1 h_2 + \tilde{N}_i h_2^2
$$
Then the special value at $s=-1$ is computed out as follows:
\begin{equation*}
\begin{split}
\zeta(-1,\fb)  &= \sum_{i=0}^{\ell-1} (-1)^{i-1} \frac{B_2^2}{4} M_i 
	+ \frac{B_4}{4!} \sum_{i=0}^{\ell-1} (-1)^i a_{\ell-i} (2 \tilde{L}_i + \tilde{M}_i + 2 \tilde{N}_i)\\
	& = \frac{1}{720} \sum_{i=0}^{\ell-1} (-1)^{i-1}\left(5 M_i + a_{\ell-i} (2 \tilde{L}_i +\tilde{M}_i + 2 \tilde{N}_i)\right)
\end{split}
\end{equation*}

Similarly for $s=-2$, 
\begin{equation*}
\begin{split}
\zeta & (-2,\fb) = \\
&\frac{1}{15120} \sum_{i=0}^{\ell-1} (-1)^i 
\left(21 M_i (N_i+L_i)
+ 2 a_{\ell-i} \left(6 \tilde{L}_i^2 + 3 \tilde{L}_i\tilde{M}_i + \tilde{M}_i^2
+ 2 \tilde{L}_i\tilde{N}_i + 3 \tilde{M}_i \tilde{N}_i + 6\tilde{N}_i^2\right)\right)
\end{split}
\end{equation*}

This should be compared with the expression obtained using negative continued fraction in \cite{Pom} and also \cite{zagier}.
They considered the zeta function of the following quadratic form in view of negative continued fraction:
$$
Q'(x_1,x_2) := N(\fb)N(x_1\delta + x_2)
$$
for $\delta =\omega +1$(See \eqref{ptom} for negative continued fraction).
Let $A_i$ be the lattice points of the component of the Klein polyhedron of $\fb^{-1}$ in the 1st quadrant with normalization:
$A_0 = 1$, $A_{-1}=\delta$  and the 1st coordinate of $A_i$ increasing according to $i$. 
Then we associate a lattice vector $(p_k, q_k)$ to $A_k$ for $A_k = - p_k A_{-1} + q_k A_0$. $p_k$ and $q_k$ are
obtain from the reduced fraction of the truncation after $k$ the of the negative continued fraction $\delta=((b_0,b_1,\ldots,b_m))$:
$$
\frac{q_k}{p_k} = (b_0,\ldots, b_{k-1})
$$
(This is the last line of pp.18 of \cite{Pom}, where $\frac{p_k}{q_k}$ should be corrected to $\frac{q_k}{p_k}$ as we just wrote above).
Similarly, $L'_i$,  $M'_i$, $N'_i$ and $\tilde{L_i}', \tilde{M_i}', \tilde{N_i}'$ are defined as the coefficients
of quadratic forms:
$$
Q'(-p_{i -1}h_1 - p_{i} h_2, q_{i-1} h_1  + q_{i} h_2)
= L_i' h_1^2 + M_i' h_1 h_2 + N_i' h_2^2
$$
and
$$
Q'(-p_{i -1}h_1 - p_{i+1} h_2, q_{i-1} h_1  + q_{i+1} h_2)
= \tilde{L}'_i h_1^2 + \tilde{M}'_i h_1 h_2 + \tilde{N}'_i h_2^2.
$$
In this setting, Garoufalidis-Pomersheim(\cite{Pom}) obtained:
\begin{equation*}
\zeta(-1,\fb) = \frac{1}{720} \sum_{i=0}^{m-1} \left( 5 M'_i + b_i (-2 \tilde{L}_i + \tilde{M}_i - 2\tilde{N}_i)\right)
\end{equation*} 
and
\begin{equation*}
\begin{split}
\zeta(-2,\fb)=&\frac1{15120}\sum_{i=0}^{m-1} (-21 M'_i (L'_i + N'_i) \\
&+ 2b_i(6 \tilde{L}'^2_i - 3\tilde{L}'_i\tilde{M}'_i + 2\tilde{L}'_i\tilde{N}'_i +\tilde{M}'^2 - 3\tilde{M}'_i \tilde{N}'_i + 6 \tilde{N}'^2_i)).
\end{split}
\end{equation*}
It should be also compared with Zagier's result(eg. for $k=1$) in \cite{zagier}:
$$
\zeta(-1,\fb) = \frac{1}{720} \sum_{i=0}^{m-1} \left(-2 N_i b_i^3 + 3M_i b_i^2 -  6 L_i b_i + 5 M_i )\right)
$$

\begin{remark}
If we use the formula for zeta values using negative continued fractions 
as is made by Garoufalidis-Pommersheim and Zagier,  one can still obtain polynomial behavior in a family 
similar to Sec. \ref{polynomial_family} after the uniformity of the negative continued fractions in the family.
But as is known, there is hardly a direct arithmetic property(eg. regulator) associated to the negative continued fractions.
Actually in \cite{J-L1}, \cite{J-L2}, the formula for negative continued fractions, which is developed by Yamamoto(\cite{Yama}) and Zagier(\cite{zagier}, \cite{zagier2}), is used after conversion of positive continued fraction into negative one.
The formula using positive continued fraction simplifies this unnecessary step and justifies the reason of our earlier results.
\end{remark}

\section{Vanishing Part}\label{vanishing_section}
Now, it remains to show the vanishing of (\ref{vanishing_part}) 
$$
\mathcal{R}
\circ\int_{\sigma(h)}e^{-Q(x_1,x_2)}dx_1dx_2\Big|_{h=0}=0.
$$

This part is crucial in expressing the zeta values at nonpositive integers as polynomials of its argument coming from terms of continued fractions and the coefficients of quadratic forms. The vanishing has been
already observed in related works by Zagier(\cite{zagier}) and Garoufalidis-Pommersheim(\cite{Pom})
in different settings. In \cite{Pom}, only the vanishing is mentioned without clear proof. In this section, we 
will recycle some notions and ideas from \cite{zagier}. 

It suffices to show the vanishing of the following, which equals the above up to multiplication by a constant.
\begin{equation}\begin{split}\label{22}
\Big(-a_{\ell}R_k(M_{-2},M_0)(\partial_{h_1},\partial_{h_2})\alpha_{\ell-1}\partial_{h_1}\partial_{h_2}-\partial_{h_1}M_0^{2k+1}(\partial_{h_1},\partial_{h_2})\\
+\partial_{h_2}M_{\ell-2}^{2n+1}(\partial_{h_1},\partial_{h_2})\Big)\circ 
\int_{\sigma_{(h_1,h_2)}}e^{-Q(x_1,x_2)}dx_1dx_2\Big|_{h=0}
\end{split}
\end{equation}

As $M_0=\beta_{\ell-1}x_2-x_1$ and $M_{\ell-2}=x_2-\alpha_{\ell-2}x_1$, we have
\begin{equation}\label{23}
\begin{split}
&x_1M_0^{2k+1}+x_2M_{\ell-2}^{2k+1}\\
&=2 x_1^{2k+2}+\sum_{i=1}^{2k+1}
(-1)^i \left(\begin{array}{c}2k+1 \\i\end{array}\right) (\beta_{\ell-1}^i+\alpha_{\ell-2}^i)x_2^ix_1^{2k+2-i}.
\end{split}
\end{equation}

Applying $\partial_{h_1}^{2k+2}$ to \eqref{basic},
we obtain
\begin{equation}\label{24}
\begin{split}
&\alpha_{\ell-1}\partial_{h_1}^{2k+2}\int_{-\frac{h_2}{\alpha_{\ell-1}}}^{\infty}\int_{-\frac{h_1}{\alpha_{\ell-1}}}^{\infty}e^{-N(\fb)N(\epsilon y_2+y_1)}dy_1dy_2\Big{|}_{h=0}\\
&=\frac{1}{\alpha_{\ell-1}}\int_{0}^{\infty}\partial_{h_1}^{2k+1}e^{-N(\fb)N(\epsilon \frac{y_2}{\alpha_{\ell-1}}-\frac{h_1}{\alpha_{\ell-1}})}\Big{|}_{h_1=0} dy_2
\end{split}
\end{equation}

If we write $P(x_1,x_2)=\frac{N(\fb)}{\alpha_{\ell-1}^2}
(x_2^2+(\epsilon+\epsilon')x_1x_2+x_1^2)$, using \eqref{23}-\eqref{24}, one can simplify the 2nd half of \eqref{22}:
\begin{equation}\label{25}
\begin{split}
\Big( -\partial_{h_1}&M_0(\partial_{h_1},\partial_{h_2})^{2k+1}+\partial_{h_2}M_{\ell-2}(\partial_{h_1},\partial_{h_2})^{2k+1} \Big) 
\circ \int_{\sigma_{(h_1,h_2)}}e^{-Q(x_1,x_2)}dx_1dx_2\Big|_{h=0}\\
=&\frac{1}{\alpha_{\ell-1}}\sum_{i=1}^{2k+1}(-1)^i
\begin{pmatrix}2k+1 \\i
\end{pmatrix}
(\beta_{\ell-1}^i+\alpha_{\ell-2}^i)\partial_{h_2}^{i-1}\partial_{h_1}^{2k+1-i}\circ e^{-P(h_1,h_2)}\Big|_{h=0}\\
&+\frac{2}{\alpha_{\ell-1}}\int_{0}^{\infty}\partial_{x_1}^{2k+1} e^{-P(-x_1,x_2)}\Big{|}_{x_1=0} dx_2
\end{split}
\end{equation}

From
$M_{-2} =(a_0\alpha_{\ell-1}+\beta_{\ell-1})x_2-x_1$ and
$M_0 =\beta_{\ell-1}x_2-x_1$,
we have
\begin{equation}\label{26}
R_k(M_{-2},M_0) =\sum_{i=0}^{2k+1}(-1)^{i+1}
\begin{pmatrix}
2k+1 \\i\end{pmatrix}
\frac{(a_0\alpha_{\ell-1}+\beta_{\ell-1})^i-\beta_{\ell-1}^i}{a_0\alpha_{\ell-1}}
x_2^{i-1}x_1^{2k+1-i}.
\end{equation}

From (\ref{25}) and  (\ref{26}), we obtain the following lemma:
\begin{lem} \label{vani1}Let $P(x_1,x_2)=\frac{N(\fb)}{\alpha_{\ell-1}^2}
(x_2^2+(\epsilon+\epsilon')x_1x_2+x_1^2).
$ 
Then we have
\begin{equation*}
\begin{split}
\Big{(}  -a_{\ell}R_k(M_{-2},M_0)&(\partial_{h_1},\partial_{h_2})\alpha_{\ell-1}\partial_{h_1}\partial_{h_2}-\partial_{h_1}M_0^{2k+1}(\partial_{h_1},\partial_{h_2})\\
&+\partial_{h_2}M_{\ell-2}^{2n+1}(\partial_{h_1},\partial_{h_2})\Big{)}\circ \int_{\sigma_{(h)}}e^{-Q(x_1,x_2)}dx_1dx_2\Big|_{h=0}\\
=  \frac{1}{\alpha_{\ell-1}}\sum_{i=0}^{2k}(-1)^{i+1} &
\begin{pmatrix}
2k+1 \\i+1
\end{pmatrix}  \big((a_0\alpha_{\ell-1}+\beta_{\ell-1})^{i+1}+(\alpha_{\ell-2})^{i+1}\big)\partial_{h_2}^{i}\partial_{h_1}^{2k-i}\circ e^{-P(h)}\Big|_{h=0}\\
&+\frac{2}{\alpha_{\ell-1}}\int_{0}^{\infty}\partial_{x_1}^{2k+1} e^{-P(-x_1,x_2)}\Big{|}_{x_1=0} dx_2.
\end{split}
\end{equation*}
\end{lem}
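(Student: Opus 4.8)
The plan is to prove the identity by splitting the operator in \eqref{22} into its two natural halves, reducing each half to an explicit expression that has already been prepared in the preceding computations, and then matching coefficients after a single reindexing. Both reductions are essentially mechanical once \eqref{25} and \eqref{26} are in hand, so the proof is a controlled bookkeeping argument rather than a new analytic estimate.

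First I would dispose of the part of \eqref{22} built from $M_0$ and $M_{\ell-2}$, namely $-\partial_{h_1}M_0^{2k+1}(\partial_{h_1},\partial_{h_2})+\partial_{h_2}M_{\ell-2}^{2k+1}(\partial_{h_1},\partial_{h_2})$ applied to the cone integral. This is exactly the content of \eqref{25}, which is obtained by expanding the relevant monomials via \eqref{23} and then invoking the boundary-derivative identity \eqref{24}: applying a top-order $h_1$-derivative to the double integral \eqref{basic} collapses the inner $y_1$-integration to an evaluation at the lower limit $-h_1/\alpha_{\ell-1}$, producing the $e^{-P}$ terms, while the leading monomial $2x_1^{2k+2}$ of \eqref{23} leaves behind the boundary integral $\frac{2}{\alpha_{\ell-1}}\int_0^\infty\partial_{x_1}^{2k+1}e^{-P(-x_1,x_2)}|_{x_1=0}\,dx_2$. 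Thus this half contributes $\frac{1}{\alpha_{\ell-1}}\sum_{i=1}^{2k+1}(-1)^i\binom{2k+1}{i}(\beta_{\ell-1}^i+\alpha_{\ell-2}^i)\partial_{h_2}^{i-1}\partial_{h_1}^{2k+1-i}\circ e^{-P}|_{h=0}$ together with that boundary integral.

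Next I would handle the $R_k$-part, $-a_\ell R_k(M_{-2},M_0)(\partial_{h_1},\partial_{h_2})\alpha_{\ell-1}\partial_{h_1}\partial_{h_2}$ applied to the integral. The key simplification is \eqref{computation1}, which gives $\alpha_{\ell-1}\partial_{h_1}\partial_{h_2}\int_{\sigma(h)}e^{-Q}=e^{-P(h_1,h_2)}$, using that $\epsilon\epsilon'=N(\epsilon)=1$ so that $N(\fb)N(\frac{h_2}{\alpha_{\ell-1}}\epsilon+\frac{h_1}{\alpha_{\ell-1}})=P(h_1,h_2)$; I would also use periodicity to replace $a_\ell$ by $a_0$. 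The operator then reduces to $-a_0R_k(M_{-2},M_0)(\partial_{h_1},\partial_{h_2})\circ e^{-P}|_{h=0}$, and expanding $R_k(M_{-2},M_0)$ with \eqref{26} turns this into $\frac{1}{\alpha_{\ell-1}}\sum_{i=0}^{2k+1}(-1)^i\binom{2k+1}{i}\big((a_0\alpha_{\ell-1}+\beta_{\ell-1})^i-\beta_{\ell-1}^i\big)\partial_{h_2}^{i-1}\partial_{h_1}^{2k+1-i}\circ e^{-P}|_{h=0}$; the $i=0$ term drops out because its coefficient $(a_0\alpha_{\ell-1}+\beta_{\ell-1})^0-\beta_{\ell-1}^0$ vanishes, so the apparent $\partial_{h_2}^{-1}$ never actually occurs.

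Finally I would add the two contributions after reindexing each sum by $j=i-1$, so that $j$ runs from $0$ to $2k$ and the operator becomes $\partial_{h_2}^{j}\partial_{h_1}^{2k-j}$. The decisive point is that the $-\beta_{\ell-1}^{j+1}$ coming from the $R_k$-part cancels exactly against the $+\beta_{\ell-1}^{j+1}$ from the $M_0,M_{\ell-2}$-part, leaving the combined coefficient $(a_0\alpha_{\ell-1}+\beta_{\ell-1})^{j+1}+\alpha_{\ell-2}^{j+1}$; together with the surviving boundary integral this is precisely the asserted right-hand side. The main obstacle is purely combinatorial bookkeeping — keeping the signs, the binomial index shift, and the $\beta_{\ell-1}$ cancellation consistent — rather than any analytic difficulty, since the two operator reductions \eqref{25} and \eqref{26} already carry all the content. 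I would emphasize that the leftover boundary integral is deliberately retained here, as the lemma is only a reduction; establishing that this remaining term vanishes is what the rest of the section must still carry out.
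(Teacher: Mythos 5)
Your proposal is correct and follows exactly the paper's own route: the paper derives the lemma by combining \eqref{25} (the $M_0$, $M_{\ell-2}$ half, obtained from \eqref{23}--\eqref{24}) with \eqref{26} (the expansion of $R_k(M_{-2},M_0)$, fed into \eqref{computation1}), and your bookkeeping --- the vanishing of the $i=0$ term, the cancellation of the $\beta_{\ell-1}^{i}$ contributions, and the shift $j=i-1$ --- is precisely the combination step the paper leaves implicit in the phrase ``From (\ref{25}) and (\ref{26}), we obtain the following lemma.'' You in fact supply more detail than the printed proof does, and you correctly identify that the residual boundary integral is to be retained for the later vanishing argument.
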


\begin{lem} \label{vani2} For the totally positive fundamental unit $\epsilon>1,$ we have 
$$\epsilon+\epsilon'=a_0\alpha_{\ell-1}+\beta_{\ell-1}+\alpha_{\ell-2}.$$
\end{lem}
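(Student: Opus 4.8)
The plan is to recognize the right-hand side $a_0\alpha_{\ell-1}+\beta_{\ell-1}+\alpha_{\ell-2}$ as the field trace $\mathrm{Tr}_{K/\FQ}(\epsilon)=\epsilon+\epsilon'$ and then to match it against the left-hand side. By Lemma \ref{convex} we already have $\epsilon=\alpha_{\ell-1}\omega+\beta_{\ell-1}$; since the $\alpha_i,\beta_i$ are rational integers, applying the nontrivial automorphism of $K$ gives $\epsilon'=\alpha_{\ell-1}\omega'+\beta_{\ell-1}$, so that
\begin{equation*}
\epsilon+\epsilon'=\alpha_{\ell-1}(\omega+\omega')+2\beta_{\ell-1}.
\end{equation*}
Thus the statement is equivalent to the trace identity $\omega+\omega'=\dfrac{a_0\alpha_{\ell-1}+\alpha_{\ell-2}-\beta_{\ell-1}}{\alpha_{\ell-1}}$, and everything reduces to computing $\omega+\omega'$.

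First I would compute the trace by writing the multiplication-by-$\epsilon$ operator on $K$ in the $\FQ$-basis $\{\omega,1\}$, so that its matrix trace equals $\mathrm{Tr}_{K/\FQ}(\epsilon)=\epsilon+\epsilon'$. The column recording $\epsilon\cdot 1=\epsilon$ has $\{\omega,1\}$-coordinates $(\alpha_{\ell-1},\beta_{\ell-1})$ by Lemma \ref{convex}. For the other column I need the coordinates of $\epsilon\cdot\omega$, and here I use the period shift of the Klein polyhedron: multiplication by the totally positive fundamental unit carries $B_i$ to $B_{i-\ell}$ (the same standard fact from \cite{van} that underlies Lemma \ref{convex}), so that $\iota(\epsilon\omega)=\epsilon B_{-1}=B_{-\ell-1}=\alpha_{\ell}B_{-1}+\beta_{\ell}B_0$, i.e.\ $\epsilon\omega=\alpha_{\ell}\omega+\beta_{\ell}$. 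Hence multiplication by $\epsilon$ has matrix $\begin{pmatrix}\alpha_{\ell}&\alpha_{\ell-1}\\\beta_{\ell}&\beta_{\ell-1}\end{pmatrix}$, and reading off its trace gives $\epsilon+\epsilon'=\alpha_{\ell}+\beta_{\ell-1}$.

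To finish I would substitute the recursion for the $\alpha_i$. Taking $i=\ell-1$ in $\alpha_{i+1}=a_{\ell-i-1}\alpha_i+\alpha_{i-1}$ (established in the proof of Prop.\ref{homo}, with $a_{\ell}=a_0$ by periodicity) yields $\alpha_{\ell}=a_0\alpha_{\ell-1}+\alpha_{\ell-2}$, whence
\begin{equation*}
\epsilon+\epsilon'=\alpha_{\ell}+\beta_{\ell-1}=a_0\alpha_{\ell-1}+\alpha_{\ell-2}+\beta_{\ell-1},
\end{equation*}
which is exactly the claim.

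The main obstacle is the clean justification of $\epsilon\omega=\alpha_{\ell}\omega+\beta_{\ell}$, namely that multiplication by $\epsilon$ advances the Klein polyhedron by exactly one even period and therefore sends $B_{-1}$ to the vertex $B_{-\ell-1}$ with coordinates $(\alpha_{\ell},\beta_{\ell})$; one must be careful that $\ell$ is the \emph{even} period, so that $\epsilon$ (rather than $\epsilon_K$) is the operative totally positive unit, forcing $N(\epsilon)=\epsilon\epsilon'=1$. If one prefers to avoid invoking the period shift in this second form, an equivalent route is to combine $\epsilon=\alpha_{\ell-1}\omega+\beta_{\ell-1}$ with $\epsilon\omega=\alpha_{\ell}\omega+\beta_{\ell}$ into the quadratic $\alpha_{\ell-1}\omega^2+(\beta_{\ell-1}-\alpha_{\ell})\omega-\beta_{\ell}=0$ satisfied by $\omega$ (hence also by $\omega'$) and read off $\omega+\omega'=(\alpha_{\ell}-\beta_{\ell-1})/\alpha_{\ell-1}$ by Vieta's formulas; the remaining algebra is identical.
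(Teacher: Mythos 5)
Your proof is correct, but it reaches the key intermediate identity $\epsilon+\epsilon'=\alpha_{\ell}+\beta_{\ell-1}$ by a different mechanism than the paper. You realize the matrix $\begin{pmatrix}\alpha_{\ell}&\alpha_{\ell-1}\\ \beta_{\ell}&\beta_{\ell-1}\end{pmatrix}$ as the multiplication-by-$\epsilon$ operator in the basis $\{\omega,1\}$ and take its trace; for this you need the full period-shift statement $\epsilon B_i=B_{i-\ell}$ (in particular $\epsilon B_{-1}=B_{-\ell-1}$), which is strictly more than what Lemma \ref{convex} records (only the image of $B_0$), and you rightly flag this as the one point requiring care. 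The paper instead stays entirely on the continued-fraction side: it uses that $\delta=-1/\omega'=[[a_{\ell-1},\dots,a_0]]$ is the fixed point of the M\"obius transformation attached to the product of the matrices $\begin{pmatrix}a_{\ell-1-j}&1\\1&0\end{pmatrix}$, extracts the quadratic $\alpha_{\ell-1}\omega^2-(\alpha_{\ell}-\beta_{\ell-1})\omega-\beta_{\ell}=0$, and reads off $\omega+\omega'=(\alpha_{\ell}-\beta_{\ell-1})/\alpha_{\ell-1}$ by Vieta before substituting into $\epsilon+\epsilon'=\alpha_{\ell-1}(\omega+\omega')+2\beta_{\ell-1}$ --- this is exactly the ``equivalent route'' you sketch in your last sentence, except that the paper derives the quadratic from periodicity of the continued fraction rather than from $\epsilon\omega=\alpha_{\ell}\omega+\beta_{\ell}$. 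Your version is more conceptual (field trace equals matrix trace) and makes the appearance of $\alpha_{\ell}+\beta_{\ell-1}$ transparent, at the cost of importing the geometric fact about the action of $\epsilon$ on the Klein polyhedron; the paper's version needs only the purely periodic expansion of $-1/\omega'$ and the recursion $\alpha_{\ell}=a_0\alpha_{\ell-1}+\alpha_{\ell-2}$, which both arguments use identically in the final step.
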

\begin{proof}
We note that
$$\delta:=-\frac{1}{\omega'}=[[a_{\ell-1},a_{\ell-2},\cdots,a_0]].$$
Thus
$$
\delta= \left(\begin{array}{cc}a_{\ell-1} & 1 \\1 & 0\end{array}\right)\left(\begin{array}{cc}a_{\ell-2} & 1 \\1 & 0\end{array}\right).....\left(\begin{array}{cc}a_{0} & 1 \\1 & 0\end{array}\right)\left(\begin{array}{c}\delta \\1\end{array}\right)=\frac{\alpha_{\ell}\delta+\alpha_{\ell-1}}{\beta_{\ell}\delta+\beta_{\ell-1}}.
$$

And we have
$$\alpha_{\ell-1}\omega^2-(\alpha_{\ell}-\beta_{\ell-1})\omega-\beta_{\ell}=0.$$
Finally we have
$$\omega+\omega'=\frac{\alpha_{\ell}-\beta_{\ell-1}}{\alpha_{\ell-1}}.$$
$$\epsilon+\epsilon'=\alpha_{\ell-1}(\omega+\omega')+2\beta_{\ell-1}=\alpha_{\ell}+\beta_{\ell-1}$$
Thus
$$\frac{\epsilon+\epsilon'-\beta_{\ell-1}-\alpha_{\ell-2}}{\alpha_{\ell-1}}=a_0.$$
\end{proof}
From Lemma \ref{vani2}, if we  let $a_0\alpha_{\ell-1}+\beta_{\ell-1}=-a,$ $\alpha_{\ell-2}=-b$ and $\frac{N(\fb)}{\alpha_{\ell-1}^2}=A$ then we find that
$\epsilon+\epsilon'=-(a+b).$
Hence one can rewrite Lemma \ref{vani1} as follows:
 \begin{equation}\label{27}
\begin{split}
\sum_{i=0}^{2k}(-1)^{i+1}
\begin{pmatrix}
2k+1 \\i+1
\end{pmatrix} & \big((a_0\alpha_{\ell-1}+\beta_{\ell-1})^{i+1}+(\alpha_{\ell-2})^{i+1}\big)\partial_{h_2}^{i}\partial_{h_1}^{2k-i}\circ e^{-P(h_1,h_2)}\Big{|}_{h=0}\\
&+2\int_{0}^{\infty}\partial_{x_1}^{2k+1} e^{-P(-x_1,x_2)}\Big{|}_{x_1=0} dx_2\\
=\sum_{i=0}^{2k}
\begin{pmatrix}
2k+1 \\i+1\end{pmatrix}&
(a^{i+1}+b^{i+1})\partial_{h_2}^{i}\partial_{h_1}^{2k-i}\circ e^{-A(h_2^2-(a+b)h_1h_2+h_1^2)}\Big{|}_{h=0}\\
&+2\int_{0}^{\infty}\partial_{x_1}^{2k+1} e^{-A(x_2^2+(a+b)x_1x_2+x_1^2)}\Big{|}_{x_1=0} dx_2.
\end{split}
\end{equation}

Hence it remains to show vanishing of the right hand side of (\ref{27}):
\begin{equation}\tag{$\star$}\label{star}
\begin{split}
\sum_{i=0}^{2k} &
\begin{pmatrix}
2k+1 \\i+1\end{pmatrix}
(a^{i+1}+b^{i+1})\partial_{h_2}^{i}\partial_{h_1}^{2k-i}\circ e^{-A(h_2^2-(a+b)h_1h_2+h_1^2)}\Big{|}_{h=0}\\
&+2\int_{0}^{\infty}\partial_{x_1}^{2k+1} e^{-A(x_2^2+(a+b)x_1x_2+x_1^2)}\Big{|}_{x_1=0} dx_2.
\end{split}
\end{equation}

For the proof, we introduce $f_k(\alpha,\beta,\gamma)$ and $d_{r,k}(\alpha,\beta,\gamma)$ as follows:
\begin{align}\label{9.7}
\int_{0}^{\infty}\partial_{x_1}^{2k+1} e^{-(\alpha x_2^2+\beta x_1x_2+\gamma x_1^2)}\Big{|}_{x_1=0} dx_2 &=-\frac{(2k+1)! f_k(\alpha,\beta,\gamma)}{2\gamma^{k+1}}\\
\label{9.8}
\sum_{i=0}^{2k}d_{i,2k-i}(\alpha,\beta,\gamma)x_1^ix_2^{2k-i}& =(\alpha x_1^2+\beta x_1x_2+\gamma x_2^2)^k
\end{align}
These numbers are originally appeared in \cite{zagier}. 
One should  see that $f_k(\alpha,\beta,\gamma)$ is odd function w.r.t. $\beta$:
$$
f_k(\alpha,\beta,\gamma) + f_k(\alpha,-\beta,\gamma) = 0.
$$
One can identify $d_{i,2k-i}(\alpha,-\beta,\gamma)$ in the following expression:
\begin{equation*}
\begin{split}
&\partial_{x_1}^i\partial_{x_2}^{2k-i} e^{-(\alpha x_1^2-\beta x_1x_2+\gamma x_2^2)}\Big{|}_{(x_1,x_2)=(0,0)}\\
&=\frac{(-1)^k}{k!}\partial_{x_1}^i\partial_{x_2}^{2k-i}(\alpha x_1^2-\beta x_1x_2+\gamma x_2^2)^k=\frac{(-1)^k}{k!}i!(2k-i)!d_{i,2k-i}(\alpha,-\beta,\gamma).
\end{split}
\end{equation*}
From this,
one can rewrite the 1st line of (\ref{star}) as
\begin{equation}\label{eq1}
\begin{split}
&\sum_{i=0}^{2k}
\begin{pmatrix}
2k+1 \\i+1\end{pmatrix}
(a^{i+1}+b^{i+1})\partial_{h_2}^{i}\partial_{h_1}^{2k-i}\circ e^{-A(h_2^2-(a+b)h_1h_2+h_1^2)}
\Big|_{h=0}
\\
&=\frac{(-1)^k}{k!}(2k+1)!\sum_{i=0}^{2k}\frac{a^{i+1}+b^{i+1}}{i+1}d_{i,2k-i}(A,-A(a+b),A)
\end{split}
\end{equation}
The 2nd line of (\ref{star}) is, from the definition of $f_k(\alpha,\beta,\gamma)$,
\begin{equation}\label{eq2}
2\int_{0}^{\infty} \partial_{x_1}^{2k+1} e^{-A(x_2^2+(a+b)x_1x_2+x_1^2)}\Big{|}_{x_1=0} dx_2
=-(2k+1)! \frac{f_k(A,A(a+b),A)}{A^{k+1}}.
\end{equation}

Now, we are going to use an identity relating $f_k(\alpha,\beta,\gamma)$ and $d_{i,2k-i}(\alpha,-\beta,\gamma)$
due to Zagier:
%
\begin{lem}[Zagier(Prop. 4 of \cite{zagier})]
For a real number $\lambda$, we have
\begin{equation*}\begin{split}
f_k(\alpha,\beta,\gamma)&+f_k(\gamma,2\lambda \gamma-\beta,\lambda^2 \gamma-\lambda \beta +\alpha)\\&=2 \frac{(-1)^k}{k!}\gamma^{k+1}\sum_{i=0}^{2k}d_{i,2k-i}(\alpha,-\beta,\gamma)\frac{\lambda^{i+1}}{i+1}.\end{split}
\end{equation*}
\end{lem}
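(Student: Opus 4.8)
The plan is to treat both sides as functions of the real parameter $\lambda$ and to prove the identity by matching their $\lambda$-derivatives together with the base case $\lambda=0$. The organizing idea is to express \emph{both} $f_k$-terms through one fixed Gaussian. Put $g(x,y)=e^{-q(x,y)}$ with $q(x,y)=\gamma x^2+\beta xy+\alpha y^2$, so that by \eqref{9.7} the first term $f_k(\alpha,\beta,\gamma)$ is a fixed scalar multiple of $\int_0^\infty\partial_x^{2k+1}g(0,y)\,dy$ and is independent of $\lambda$. The key observation is that the triple $(\gamma,\,2\lambda\gamma-\beta,\,\lambda^2\gamma-\lambda\beta+\alpha)$ is precisely the coefficient triple of $q$ precomposed with the substitution $T_\lambda\colon(x,y)\mapsto(-\lambda x-y,\,x)$: a direct check gives $q(-\lambda x-y,\,x)=(\lambda^2\gamma-\lambda\beta+\alpha)x^2+(2\lambda\gamma-\beta)xy+\gamma y^2$. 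Since $\partial_x$ in the $T_\lambda$-coordinates corresponds to the directional derivative $-\lambda\partial_x+\partial_y$ in the original ones, and $T_\lambda$ carries the positive second-axis ray onto the negative $x$-axis, the second term becomes a boundary integral of the same $g$:
$$
f_k(\gamma,2\lambda\gamma-\beta,\lambda^2\gamma-\lambda\beta+\alpha)=-\frac{2\gamma^{k+1}}{(2k+1)!}\int_{-\infty}^0(-\lambda\partial_x+\partial_y)^{2k+1}g(x,0)\,dx.
$$

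Next I would differentiate in $\lambda$. On the right-hand side I first use the defining property \eqref{9.8} of the numbers $d_{i,2k-i}$ to rewrite $\sum_i d_{i,2k-i}(\cdot)\,\lambda^{i+1}/(i+1)$ as $\int_0^\lambda$ of the relevant $k$-th power of a quadratic in $t$, so that $\tfrac{d}{d\lambda}$ of the right-hand side is immediate by the fundamental theorem of calculus. On the left-hand side only the second term varies with $\lambda$; differentiating under the integral sign brings down the factor $(2k+1)(-\partial_x)$, and since the resulting integrand along the negative $x$-axis is a total $x$-derivative, the fundamental theorem of calculus collapses the integral to the single boundary value at the origin,
$$
\tfrac{d}{d\lambda}\,f_k(\gamma,2\lambda\gamma-\beta,\lambda^2\gamma-\lambda\beta+\alpha)=\frac{2\gamma^{k+1}}{(2k+1)!}\,(2k+1)\,(-\lambda\partial_x+\partial_y)^{2k}g\big|_{(0,0)}.
$$
That boundary value is a pure $2k$-th order directional derivative of the Gaussian at its center: only the degree-$2k$ Taylor term $\tfrac{(-1)^k}{k!}q^k$ contributes, and $\partial_w^{2k}h|_0=(2k)!\,h(w)$ for $h$ homogeneous of degree $2k$, whence it equals $\tfrac{(-1)^k(2k)!}{k!}\,q(-\lambda,1)^k$. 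Collecting constants gives $\tfrac{d}{d\lambda}$ of the left-hand side equal to $\tfrac{2(-1)^k}{k!}\gamma^{k+1}q(-\lambda,1)^k$, which one identifies with the derivative of the right-hand side produced by \eqref{9.8}.

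For the base case $\lambda=0$ the right-hand side vanishes, while the left-hand side is $f_k(\alpha,\beta,\gamma)+f_k(\gamma,-\beta,\alpha)$. Using the oddness of $f_k$ in its middle variable recorded just after \eqref{9.8}, this equals $f_k(\alpha,\beta,\gamma)-f_k(\gamma,\beta,\alpha)$, so the base case is exactly the symmetry of $f_k$ under interchanging its first and third arguments. Combining the matching derivatives with this vanishing base case yields the identity for all $\lambda$.

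I expect the $\alpha\leftrightarrow\gamma$ symmetry to be the main obstacle, since it is invisible in the definition \eqref{9.7}, where one variable is differentiated and the other integrated. I would establish it separately and most cleanly by completing the square in \eqref{9.7} and evaluating the resulting one-dimensional Gaussian moments $\int_0^\infty y^{2j+1}e^{-\alpha y^2}\,dy$, which exhibits $f_k$ as a symmetric odd polynomial in $(\alpha,\beta,\gamma)$; the symmetry is then manifest. The remaining points are routine: differentiation under the integral sign and the vanishing of the boundary term as $x\to-\infty$ are both justified by the positive-definiteness of $q$, which forces $g$ together with all its polynomial-weighted derivatives to decay along the rays in question.
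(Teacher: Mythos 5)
The paper never proves this lemma---it is imported verbatim as Zagier's Prop.~4---so there is no internal proof to compare against, and your argument is necessarily an independent one. The strategy is sound and its main steps check out: realizing the second $f_k$ as a boundary integral of the single Gaussian $g=e^{-q}$ along the negative $x$-axis via $T_\lambda$, collapsing $\tfrac{d}{d\lambda}$ of that integral to the point evaluation $(-\lambda\partial_x+\partial_y)^{2k}g(0,0)=\tfrac{(-1)^k(2k)!}{k!}\,q(-\lambda,1)^k$ by the fundamental theorem of calculus, and reducing the base case $\lambda=0$ to the $\alpha\leftrightarrow\gamma$ symmetry of $f_k$. Your Gaussian-moment argument for that symmetry is correct and genuinely needed: writing $\partial_{x}^{2k+1}e^{-q}\big|_{x=0}$ as a combination of terms $c_{k,j}\,\alpha^{j}(\beta y)^{2k+1-2j}e^{-\gamma y^2}$ and integrating, each monomial of $f_k$ becomes a constant times $\beta^{2k+1-2j}(\alpha\gamma)^j$, so both the symmetry and the oddness in $\beta$ are manifest.

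There is, however, one point you cannot gloss over, because as written your two key displays contradict the paper's own definitions. By \eqref{9.7} as printed, the normalizing power attached to $f_k(\alpha',\beta',\gamma')$ is $(\gamma')^{k+1}$ with $\gamma'$ the coefficient of the \emph{differentiated} variable; for your second term that is $(\lambda^2\gamma-\lambda\beta+\alpha)^{k+1}$, not the $\gamma^{k+1}$ you wrote. This is not cosmetic: if the prefactor depends on $\lambda$, the step ``only the integral varies with $\lambda$'' fails. Likewise, your final identification equates $q(-\lambda,1)^k=(\gamma\lambda^2-\beta\lambda+\alpha)^k$ with $\sum_i d_{i,2k-i}(\alpha,-\beta,\gamma)\lambda^i$, whereas \eqref{9.8} literally gives $(\alpha\lambda^2-\beta\lambda+\gamma)^k$ for that sum. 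The resolution is that \eqref{9.7}--\eqref{9.8} as printed cannot be the intended conventions: they force $f_0(\alpha,\beta,\gamma)=\beta\gamma/\alpha$, for which the stated identity already fails at $k=0$ (the left side acquires a $2\lambda^3\gamma^2$ term), and they destroy the oddness in $\beta$ and polynomiality of $f_k$ that the paper uses. With the normalization by the $(k+1)$-st power of the coefficient of the \emph{integrated} variable one gets $f_0=\beta$, the $k=0$ case becomes $\beta+(2\lambda\gamma-\beta)=2\lambda\gamma$, and both of your displays become correct; this is evidently Zagier's convention, and it is the one your proof silently adopts. Your argument is therefore a correct proof of the true statement, but you should state the convention you are using explicitly and note that it differs from \eqref{9.7}--\eqref{9.8} by an $\alpha\leftrightarrow\gamma$ (equivalently $x_1\leftrightarrow x_2$) swap---a discrepancy that is invisible in the paper's application, where all three arguments are equal.
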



If we put 
$\alpha=A,\beta=A(a+b),\gamma=A$ and $\lambda=a$ (resp. $\lambda=b$)
into the above,  we obtain
\begin{equation*}\label{eq3}
\begin{split}
f_k(A,A(a+b),A)&+f_k(A,A(a-b), A(-ab+1))\\&=2 \frac{(-1)^k}{k!}A^{k+1}\sum_{i=0}^{2k} d_{i,2k-i}(A,-A(a+b),A)\frac{a^{i+1}}{i+1}
\end{split}
\end{equation*}
and
\begin{equation*}\begin{split}
f_k(A,A(a+b)&,A)+f_k(A,A(b-a), A(-ab+1))\\&=2 \frac{(-1)^k}{k!}A^{k+1}\sum_{i=0}^{2k} d_{i,2k-i}(A,-A(a+b),A)\frac{b^{i+1}}{i+1}.
\end{split}
\end{equation*}


As $f_k$ is odd function of its 2nd argument, summing the above two equations, we have 
%
\begin{equation*}\label{eq4}
\begin{split}
\frac{f_k(A,A(a+b),A)}{A^{k+1}}
= \frac{(-1)^k}{k!}\sum_{i=0}^{2k} d_{i,2k-i}(A,-A(a+b),A)\frac{a^{i+1}+b^{i+1}}{i+1}.
\end{split}
\end{equation*}
This identifies (\ref{eq1}) and (\ref{eq2}) up to sign. 

Therefore we concludes the vanishing of (\ref{star}).

\section{Appllication: Polynomial behavior of zeta values at nonpositive integers in family}\label{polynomial_family}

Until now, we developed a way to compute the partial zeta values at nonpositive integers 
for a real quadratic field with a fixed ideal $\fb$ via the shape of the continued fraction
of $\omega$ for $\fb^{-1}=[1,\omega]$. We will apply this method to certain families
of real quadratic fields to prove the main theorem of this paper(Thm. \ref{polynomial_behavior}). We deal with the same family of real quadratic fields with ideals fixed
as in our earlier works (\cite{J-L1}, \cite{J-L2}).
In our previous works, the partial Hecke's $L$-values and the partial zeta values of a ray class ideal at $s=0$ are investigated for family of real quadratic fields. 
We showed that the values in the family is given by a quasi-polynomial in variable $n$ which is 
the index of the family of real quadratic fields considered. 
If the conductor is trivial, so that we consider
ideal classes, the values behave actually in a polynomial. 
This method was originally observed by Bir\'o and 
has been main ingredient to solve
class number problems of the real quadratic fields in the family without relying on 
the Riemann hypothesis(cf. \cite{Biro1}, \cite{Biro2}, \cite{Lee1}, \cite{Lee2}, \cite{B-L}).
Here we deal with the case when the conductor
is trivial. Thus we have strict polynomial instead of quasi-polynomials.

We generalize the result on the partial zeta values at $s=0$ to every nonpositive integer $s$ when the
conductor is trivial. This means we consider partial zeta function of ideal classes instead of ray classes.
So the scope of partial zeta functions we consider here is narrower than the previous. 
But the same method must be applicable to ray class partial zeta functions.
In this case by the same reason quasi-polynomials are appearing instead of polynomials to give the zeta values at a given nonpositive integer for the same family of ideals. Again this will answer the same for 
the partial Hecke L-values at arbitrary non-positive integers.


Recall the conditions on the family $(K_n, \fb_n)$ indexed by $n\in N$ for a subset $N$ of $\FN$. 
%
$\fb_n^{-1} = [1,\omega(n)]$ for a reduced
element $\omega(n)\in K_n$ and 
$$
\omega(n)=[[a_0(n),a_1(n),\cdots,a_{r-1}(n)]]
$$
for polynomials $a_i(x)\in \FZ[x]$ and the quadratic form
$N(\fb_n)(x\omega(n)+y)(x\omega(n)'+y)$ associated with $\fb_n$ is
expressed as
$$b_1(n)x^2+b_2(n) xy +b_3(n) y^2$$ for polynomials
$b_i(x)\in \FZ[x].$ 


\begin{proof}[Proof of Thm.\ref{polynomial_behavior}]
Applying  Thm.\ref{2nd_main} to the family considered,  we have 
\begin{equation*}
\begin{split}
&\zeta(-k,\fb_n) =\\
&\sum_{i=0}^{\ell-1}(-1)^{i-1}L_k(\partial_{h_1},\partial_{h_2})Q(\alpha_i (n) h_1-\alpha_{i-1} (n) h_2,
\beta_i (n) h_1-\beta_{i-1}(n) h_2)^k \\  
&+\frac{B_{2k+2}}{(2k+2)!}
	\sum_{i=0}^{\ell-1}(-1)^i a_{\ell-i}(n) R_k(\partial_{h_1},\partial_{h_2})
	Q(\alpha_{i-2}(n) h_1+\alpha_{i}(n) h_2,\beta_{i-2}(n) h_1+\beta_{i}(n) h_2)^k. 
\end{split}
\end{equation*}

Since $Q(-)$ is a quadratic form and $a_i(n),b_i(n), \alpha_i(n), \beta_i(n)$ are polynomials,
it is clear that $\zeta(-k,\fb_n)$ is a polynomial in $n$.

Notice that 
$$\deg\alpha_i \geq \deg \beta_i$$ and 
$$\deg \alpha_i \geq \deg \alpha_{i-1}.$$
Thus, the highest degree term comes from the summand with $i=\ell-1$.
Putting altogether, we obtain the denominator $C_k$ as well as the degree $m= kC+D$ 
for the explicitly given $C$, $D$. 
\end{proof}
\begin{remark}
One should notice the independence of $n$ of the denominator $C_k$ of $\zeta(-k,\fb_n)$.
\textit{A priori} this is invariant in the family. It is important to control the denominator to 
interpolate the associated $p$-adic zeta function 
from the values at negative integers(cf. \cite{Coates-S}, \cite{Deligne-Ribet} and \cite{katz_another}).
\end{remark}

 For the rest of the paper, for a number field $K$, let us denote the ring of integers by $O_K$. 

\begin{exam}Consider the family $(K_n=\FQ(\sqrt{n^2+2}), \fb_n= O_{K_n})$. 
Then 
$$
\fb_n^{-1} =O_{K_n}=[1,\omega_n]
$$ for
$\omega_n=[[2n,n]]$.

Then we have
\begin{align*}
\zeta(0,\fb_n)&=\tfrac{n}{12}\\
\zeta(-1,\fb_n)&=-\tfrac{19 n}{360} + \tfrac{n^3}{40}\\
\zeta(-2,\fb_n)&=\tfrac{2 n}{45} -\tfrac{ n^3}{945} - \tfrac{23 n^5}{1890}\\
\zeta(-3,\fb_n)&=-\tfrac{2159 n}{25200} + \tfrac{137 n^3}{25200} - \tfrac{59 n^5}{840} + \tfrac{3 n^7}{56}\\
\zeta(-4,\fb_n)&=
	\tfrac{68 n}{231} - \tfrac{797 n^3}{6930} + \tfrac{689 n^5}{1155} + \tfrac{134 n^7}{1155} - \tfrac{
 2878 n^9}{10395}\\
 \zeta(-5,\fb_n)&=
 	-\tfrac{11947883 n}{7567560} + \tfrac{29660563 n^3}{22702680} - \tfrac{
 26073083 n^5}{5675670} -\tfrac{7603 n^7}{2310} - \tfrac{145933 n^9}{135135} 
 	+ \tfrac{
 351719 n^{11}}{135135}
\end{align*}

\end{exam}
\begin{exam}
Let $K_n=\FQ(\sqrt{16 n^4+32 n^3+24 n^2+12n +3})$ and $\fb_n=O_{K_n}$.
Then $\fb_n^{-1} =O_{K_n}=[1,\omega_n]$  for 
$\omega_n=[[8n^2+8n+2,2n+1]]$.
{\allowdisplaybreaks
\begin{align*}
\zeta(0,\fb_n)=&\tfrac{1}{12} +\tfrac{ n}{2} + \tfrac{2 n^2}3\\
\zeta(-1,\fb_n)=&-\tfrac{7}{72} - \tfrac{13 n}{20} - \tfrac{11 n^2}9 + \tfrac{n^3}{45} + \tfrac{34 n^4}{15} + \tfrac{
 104 n^5}{45} + \tfrac{32 n^6}{45}\\
\zeta(-2,\fb_n)=&\tfrac{ 503}{2520} + \tfrac{2773 n}{1260} + \tfrac{8473 n^2}{945} + \tfrac{13009 n^3}{945} - \tfrac{6898 n^4}{945} - \tfrac{360 n^5}7 - \tfrac{6208 n^6}{105} \\
& - \tfrac{3328 n^7}{315} + \tfrac{
 25472 n^8}{945} + \tfrac{18944 n^9}{945} + \tfrac{4096 n^{10}}{945}\\
\zeta(-3,\fb_n)=&-\tfrac{823}{840} - \tfrac{7762 n}{525} - \tfrac{193469 n^2}{2100} - \tfrac{309377 n^3}{1050} - \tfrac{232553 n^4}{525} + \tfrac{143188 n^5}{1575} + \tfrac{2707724 n^6}{1575} \\ 
& + 
\tfrac{5759672 n^7}{1575}  + \tfrac{7377392 n^8}{1575} + \tfrac{7421248 n^9}{1575}
  + \tfrac{
 147072 n^{10}}{35} + \tfrac{4862464 n^{11}}{1575} + \tfrac{830464 n^{12}}{525}\\
 & + \tfrac{
 249856 n^{13}}{525} + \tfrac{32768 n^{14}}{525}
 \\
\zeta(-4,\fb_n)=&\tfrac{106613}{11880} + \tfrac{262407 n}{1540} + \tfrac{1957759 n^2}{1386} + \tfrac{
 23147174 n^3}{3465} + \tfrac{203979376 n^4}{10395} + \tfrac{
 365417032 n^5}{10395} \\  & + \tfrac{257724232 n^6}{10395} - 
 \tfrac{764543312 n^7}{10395} - \tfrac{1238665888 n^8}{3465} - \tfrac{
 3134586496 n^9}{3465} - \tfrac{3314036224 n^{10}}{2079}\\
 & - \tfrac{
 4177427456 n^{11}}{2079}  
 - \tfrac{880111616 n^{12}}{495}  - \tfrac{
 2179907584 n^{13}}{2079} - \tfrac{27426816 n^{14}}{77} \\
 & - \tfrac{86638592 n^{15}}{3465} 
 + \tfrac{
 22151168 n^{16}}{693}
  + \tfrac{12058624 n^{17}}{945}  + \tfrac{16777216 n^{18}}{10395}\\
 \zeta(-5,\fb_n)=&-\tfrac{4617527}{34398} - \tfrac{52647823 n}{17199} - \tfrac{71296254191 n^2}{2270268} - \tfrac{
 22288517357 n^3}{115830} - \tfrac{2248765926611 n^4}{2837835} 
  \\
   & - \tfrac{
 202240251208 n^5}{85995} 
  - \tfrac{1639280941052 n^6}{315315} - \tfrac{
 7365379306328 n^7}{945945} - \tfrac{45593045200 n^8}{51597}\\
 & + \tfrac{
 124808351658752 n^9}{2837835} 
 + \tfrac{501230433622144 n^{10}}{2837835} + \tfrac{
 1216530615292672 n^{11}}{2837835}+ 
 \tfrac{421424974443008 n^{12}}{567567} \\
 & + \tfrac{
 2733316068964352 n^{13}}{2837835} + \tfrac{301312337145856 n^{14}}{315315} + \tfrac{
 694067022135296 n^{15}}{945945} + \tfrac{416253219504128 n^{16}}{945945} 
 \\ &+ \tfrac{
 84428067700736 n^{17}}{405405} 
 + \tfrac{223080016510976 n^{18}}{2837835} + \tfrac{
 13420435865600 n^{19}}{567567} + \tfrac{15443400065024 n^{20}}{2837835} \\
 & + \tfrac{
 481111834624 n^{21}}{567567}  + \tfrac{185488900096 n^{22}}{2837835}
\end{align*}
}

\end{exam}

\end{document}